\newtheoremstyle{Teorema}{5pt}{5pt}{\it}{}{\bf}{.}{ }{}
\theoremstyle{Teorema}
\newtheorem{Theorem}{Theorem}[section]
\newtheorem*{ThrmAOne}{Theorem A1}
\newtheorem*{ThrmATwo}{Theorem A2}
\newtheorem*{ThrmB}{Theorem B}
\newtheorem*{ThrmC}{Theorem C}
\newtheorem{Corollary}[Theorem]{Corollary}
\newtheorem{Proposition}[Theorem]{Proposition}
\newtheorem{Definition}[Theorem]{Definition}
\newtheorem{PropDef}[Theorem]{Proposition and Definition}
\newtheorem{Lemma}[Theorem]{Lemma}
\newtheoremstyle{Annotazione}{5pt}{5pt}{\rm}{}{\it}{.}{ }{}
\theoremstyle{Annotazione}
\newtheorem{Remark}[Theorem]{Remark}
\newtheorem{Condition}[Theorem]{Condition}
\newtheorem{Example}[Theorem]{Example}
\newtheorem{Examples}[Theorem]{Examples}
\def\SL{\mathrm{SL}}
\def\PGL{\mathrm{PGL}}
\def\GL{\mathrm{GL}}
\def\Sh{\mathrm{Sh}}
\def\bba{\mathbb{A}}
\def\bbz{\mathbb{Z}}
\def\bbr{\mathbb{R}}
\def\bbq{\mathbb{Q}}
\def\bbc{\mathbb{C}}
\def\bbh{\mathbb{H}}
\def\bbp{\mathbb{P}}
\def\bbn{\mathbb{N}}
\def\bbv{\mathbb{V}}
\def\bbw{\mathbb{W}}
\def\bbs{\mathbb{S}}
\def\bbg{\mathbb{G}}
\def\bbt{\mathbb{T}}
\def\id{\mathrm{id}}
\def\Res{\operatorname{Res}}
\def\Pic{\operatorname{Pic}}
\def\End{\operatorname{End}}
\def\Hom{\operatorname{Hom}}
\def\Aut{\operatorname{Aut}}
\def\tr{\operatorname{tr}}
\def\Gal{\operatorname{Gal}}
\def\Spec{\operatorname{Spec}}
\def\Res{\operatorname{Res}}
\def\an{\mathrm{an}}
\def\can{\mathrm{can}}
\def\qbar{\widebar{\bbq}}
\def\Cbar{\widebar{C}}
\def\defined{\overset{\mathrm{def}}{=}}
\def\Hup{\mathrm{H}}
\def\tit{\tilde{T}}
\def\tif{\tilde{f}}
\def\taux{{^{\tau ,x}}}
\def\tauy{{^{\tau ,y}}}
\def\ad{\mathrm{ad}}
\def\sp{\operatorname{sp}}
\def\gad{G^{\mathrm{ad}}}
\def\had{H^{\mathrm{ad}}}
\def\et{\text{\rm \'{e}t}}
\newcommand*\rel@kern[1]{\kern#1\dimexpr\macc@kerna}
\newcommand*\widebar[1]{%
  \begingroup
  \def\mathaccent##1##2{%
    \rel@kern{0.8}%
    \overline{\rel@kern{-0.8}\macc@nucleus\rel@kern{0.2}}%
    \rel@kern{-0.2}%
  }%
  \macc@depth\@ne
  \let\math@bgroup\@empty \let\math@egroup\macc@set@skewchar
  \mathsurround\z@ \frozen@everymath{\mathgroup\macc@group\relax}%
  \macc@set@skewchar\relax
  \let\mathaccentV\macc@nested@a
  \macc@nested@a\relax111{#1}%
  \endgroup
}
\begin{document}

\title{Modular embeddings and automorphic~Higgs~bundles}
\author{Robert A. Kucharczyk}
\address{Universit\"{a}t Bonn\\ Mathematisches Institut\\ Endenicher Allee 60\\ 53115 Bonn\\ Germany}
\email{robert.a.kucharczyk@googlemail.com}
\thanks{Research partially supported by the European Research Council}
\keywords{Modular embeddings, Fuchsian groups, quaternionic Shimura varieties, Hilbert--Blumenthal varieties, Higgs bundles, automorphic vector bundles, variations of Hodge structure, triangle groups, dessins d'enfants, Galois conjugation}
\subjclass[2010]{Primary: 11F41, 11F99, 11G18, 14G35. Secondary: 11F03, 11G30, 11G35, 14G30, 14H60, 34M45.}
\begin{abstract}
We investigate modular embeddings for semi-arithmetic Fuchsian groups. First we prove some purely algebro-geometric or even topological criteria for a regular map from a smooth complex curve to a quaternionic Shimura variety to be covered by a modular embedding. Then we set up an adelic formalism for modular embeddings and apply our criteria to study the effect of abstract field automorphisms of $\bbc$ on modular embeddings. Finally we derive that the absolute Galois group of $\bbq$ operates on the dessins d'enfants defined by principal congruence subgroups of (arithmetic or non-arithmetic) triangle groups by permuting the defining ideals in the tautological way.
\end{abstract}
\maketitle

\tableofcontents

\thispagestyle{empty}

\section{Introduction}

\noindent 
In this article we examine several geometric and arithmetic aspects of modular embeddings in the sense of Schmutz Schaller and Wolfart~\cite{MR1745404}. These are certain holomorphic maps $\tif\colon\bbh\to\bbh^r$, where $\bbh$ is the complex upper half plane, which are equivariant for group homomorphisms $\varphi\colon\varDelta\to\varGamma$, where $\varDelta\subset\PGL_2(\bbr )^+$ is a (not necessarily arithmetic) lattice and $\varGamma\subset (\PGL_2(\bbr )^+)^r$ is an irreducible arithmetic lattice, and where the first projection $\PGL_2(\bbr )^r\to\PGL_2(\bbr )$ composed with $\varphi$ induces the identity on~$\varDelta$. Such embeddings induce regular maps between algebraic varieties $f\colon C\to S$, where $C$ is the algebraic curve with $C^{\an }=\varDelta\backslash\bbh$, and $S$ is the Shimura variety with $S^{\an }=\varGamma\backslash\bbh^r$.

As of today three series of examples are known for groups $\varDelta$ admitting modular embeddings:
\begin{enumerate}
\item If $\varDelta$ is \emph{arithmetic} the identity $\bbh\to\bbh$ is a modular embedding. More generally modular embeddings into higher-dimensional quaternionic Shimura varieties can be constructed quite easily.
\item All Fuchsian \emph{triangle groups} $\varDelta =\varDelta_{p,q,r}$ admit modular embeddings, but only finitely many of them are arithmetic (see \cite{MR0429744}). These modular embeddings were constructed in \cite{MR1075639} where also modular embeddings for non-arithmetic groups were first introduced.
\item Those \emph{Veech groups} that are lattices -- in which case they can also be characterised as the uniformising groups of \emph{Teichm\"{u}ller curves} -- admit modular embeddings. This was discovered in \cite{MR1992827} for genus two and in \cite{MR2188128} for the general case.
\end{enumerate}
See Examples~\ref{ExamplesME} below for a more detailed review.

The main results of this article can be roughly divided into two distinct but related groups. In the first group we start with an arbitrary regular map $f\colon C\to S$ where $C$ is an algebraic curve over $\bbc$ and $S$ is a Shimura variety of the kind that appears for modular embeddings (a quaternionic connected Shimura variety, see Section~\ref{Subs:Shimura} below), and give criteria of a geometric nature determining whether $f$ arises from a modular embedding. In the second group we study the behaviour of modular embeddings under abstract field automorphisms of $\bbc$, and deduce a result about the Galois operation on a certain class of dessins d'enfants.

We now announce our main results in more explicit phrasing.

\subsubsection*{Geometry of modular embeddings}
We recall the construction of irreducible arithmetic lattices in $(\PGL_2(\bbr )^+)^r$. This begins with a totally real number field $F$ and a quaternion algebra $B/F$. Let $g=[F:\bbq ]$ and let $\varrho_1,\ldots ,\varrho_g\colon F\to\bbr$ be the distinct field embeddings, and assume they are numbered in such a way that $B$ splits at $\varrho_1,\ldots ,\varrho_r$ and ramifies at $\varrho_{r+1},\ldots ,\varrho_g$. We assume that $r\ge 1$, and for every $1\le j\le r$ we fix an isomorphism $B\otimes_{F,\varrho_j}\bbr\cong\mathrm{M}_2(\bbr )$.

Let further $\mathscr{O}\subset B$ be an order and let $\mathscr{O}^1$ be the group of invertible elements of reduced norm one in~$\mathscr{O}$. By the isomorphisms we just chose this embeds as an irreducible arithmetic lattice in $\SL_2(\bbr )^r$, and any subgroup $\varGamma\subset (\PGL_2(\bbr )^+)^r$ commensurable to the image of $\mathscr{O}^1$ in $(\PGL_2(\bbr )^+)^r$ is also an irreducible arithmetic lattice.

\begin{Definition}
Let $\varGamma$ as above, let $\varDelta\subset\PGL_2(\bbr )^+$ be a lattice and let $1\le j\le r$. A \emph{modular embedding for $\varDelta$ with respect to $\varrho_j$} is a pair $(\tif ,\varphi )$ such that:
\begin{enumerate}
\item $\varphi$ is a group homomorphism $\varDelta\to\varGamma$ such that the composition
$$\varDelta\overset{\varphi }{\to }\varGamma \subset (\PGL_2(\bbr )^+)^r\overset{\mathrm{pr}_j}{\to }\PGL_2(\bbr )^+$$
is equal to the identity inclusion (in particular, $\varDelta$ is commensurable to a subgroup of the possibly indiscrete group $\varrho_j(\mathscr{O}^1)\subset\PGL_2(\bbr )^+$);
\item $\tif\colon\bbh\to\bbh^r$ is a holomorphic map equivariant for $\varphi\colon\varDelta\to\varGamma$, i.e.\ satisfying
\begin{equation*}
\tif (\delta z)=\varphi (\delta )\tif (z)\qquad\text{for all }z\in\bbh,\, \delta\in\varDelta .
\end{equation*}
\end{enumerate}
\end{Definition}
As mentioned above, such a modular embedding defines a regular map of algebraic varieties $f\colon C\to S$. Our first two main results give purely algebro-geometric (or even topological) criteria when a morphism from a curve to a quaternionic Shimura variety is covered by a modular embedding. We need to distinguish between two cases, depending on whether $C$ is affine or projective.

To state the results it is necessary to assume that $\varGamma$ is torsion-free; this can always be assumed by passing to a finite index subgroup, which is benign for our purposes. The cotangent bundle $\Omega^1_{\bbh^r}$ splits in an obvious way as a sum of line bundles, induced by the product decomposition $\bbh^r=\prod_{j=1}^r\bbh$. Since the elements of $\varGamma$ preserve this decomposition, it descends to the cotangent bundle of~$S$:
\begin{equation*}
\Omega^1_S=\bigoplus_{j=1}^r\mathcal{M}_j
\end{equation*}
for some algebraic line bundles $\mathcal{M}_j$ on~$S$.
\begin{ThrmAOne}
Let $S$ be a Shimura variety of the kind just described, constructed from a quaternion algebra $B/F$, let $C$ be a smooth projective complex curve, let $f\colon C\to S$ be a morphism of complex algebraic varieties and let $\varrho_j$ be an archimedean place of $F$ at which $B$ is unramified.
Then the following are equivalent:
\begin{enumerate}
\item $f$ is covered by a modular embedding with respect to $\varrho_j$.
\item $f^{\ast }\!\mathcal{M}_j$ is isomorphic to the canonical bundle $\omega_C$ as an algebraic line bundle.
\item $f^{\ast }\!\mathcal{M}_j$ is isomorphic to $\omega_C$ as a topological line bundle (i.e.\ the degree of $f^{\ast }\!\mathcal{M}_j$ is minus the Euler characteristic of~$C$).
\end{enumerate}
\end{ThrmAOne}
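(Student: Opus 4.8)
The plan is to run the cycle of implications $(i)\Rightarrow(ii)\Rightarrow(iii)\Rightarrow(i)$; the first two are essentially formal and the third carries the content. Let $\varDelta\subset\PGL_2(\bbr)^+$ be a Fuchsian uniformising group of $C$, so $C^{\an}=\varDelta\backslash\bbh$, and recall that $\mathcal{M}_j$ is cut out upstairs on $\bbh^r$ as the summand $\mathrm{pr}_j^{\ast}\Omega^1_\bbh$ with its tautological $\varGamma$-linearisation; we assume $g\ge 2$, the low-genus cases being degenerate. For $(i)\Rightarrow(ii)$: if $(\tif,\varphi)$ is a modular embedding with respect to $\varrho_j$ inducing $f$, then $\mathrm{pr}_j\circ\varphi=\id_\varDelta$, so the $j$-th component $\tif_j=\mathrm{pr}_j\circ\tif\colon\bbh\to\bbh$ is holomorphic and commutes with the standard $\varDelta$-action; since $C$ is projective of genus $\ge 2$, standard rigidity (it descends to a self-map of $C^{\an}$ inducing the identity on $\pi_1$, hence homotopic and then equal to the identity) forces $\tif_j=\id_\bbh$. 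Therefore $f^{\ast}\mathcal{M}_j$ is, upstairs, $\tif^{\ast}\mathrm{pr}_j^{\ast}\Omega^1_\bbh=\tif_j^{\ast}\Omega^1_\bbh=\Omega^1_\bbh$ with its tautological $\varDelta$-linearisation, which descends to $\omega_C$. The implication $(ii)\Rightarrow(iii)$ is trivial, as an isomorphism of algebraic line bundles is one of topological line bundles and $\deg\omega_C=2g-2=-\chi(C)$.

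For $(iii)\Rightarrow(i)$, fix a uniformisation and an equivariant lift $\tif\colon\bbh\to\bbh^r$ of $f$, and form the morphism of line bundles $\psi_j\colon f^{\ast}\mathcal{M}_j\hookrightarrow f^{\ast}\Omega^1_S\overset{df}{\longrightarrow}\omega_C$, which upstairs is the codifferential of $\tif_j$; thus $\psi_j=0$ iff $\tif_j$ is constant. I claim $\psi_j\neq 0$. If $\tif_j$ were constant, equal to $w_0\in\bbh$, equivariance would force $\varphi_j(\varDelta)$ to fix $w_0$, hence to lie in a maximal compact subgroup of $\PGL_2(\bbr)^+$, so $f^{\ast}\mathcal{M}_j$ would be the flat unitary line bundle on $C^{\an}$ attached to the resulting character $\varDelta\to\mathrm{U}(1)$ and would have degree $0$, contradicting $\deg f^{\ast}\mathcal{M}_j=2g-2>0$. (Equivalently, in the language of the preceding sections: a vanishing $\psi_j$ would exhibit $f^{\ast}\mathcal{L}_j$ — a square root of $f^{\ast}\mathcal{M}_j$, of positive degree — as a $\theta$-invariant subsheaf of the pulled-back uniformising Higgs bundle, contradicting its poly-stability.) Since $f^{\ast}\mathcal{M}_j$ and $\omega_C$ have the same degree on the smooth projective curve $C$, the non-zero $\psi_j$ is an isomorphism, i.e. $\tif_j\colon\bbh\to\bbh$ is a local biholomorphism.

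It remains to promote $\tif_j$ to an element of $\PGL_2(\bbr)^+$ and to normalise. Pulling the hyperbolic metric $\eta$ of the target back along the local biholomorphism $\tif_j$ gives a Hermitian metric $h=\tif_j^{\ast}\eta$ on $\bbh$ of constant curvature $-1$, invariant under $\varDelta$ by equivariance, so it descends to a constant-curvature-$(-1)$ Hermitian metric on the compact Riemann surface $C^{\an}$; by the uniqueness clause of the uniformisation theorem this is the Poincaré metric of $C^{\an}$, namely the descent of $\eta$ itself, whence $\tif_j^{\ast}\eta=\eta$ and $\tif_j$ is a holomorphic isometry of $\bbh$, that is $\tif_j=A$ for some $A\in\PGL_2(\bbr)^+$. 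Precomposing the uniformisation $\bbh\to C^{\an}$ with $A^{-1}$ replaces $\varDelta$ by $A\varDelta A^{-1}$, $\tif$ by $\tif\circ A^{-1}$ — whose $j$-th component is now $\id_\bbh$ — and $\varphi$ by the corresponding conjugate homomorphism into the unchanged lattice $\varGamma$, whose $j$-th projection is again the identity; so $f$ is covered by a modular embedding with respect to $\varrho_j$, and the cycle closes. The one genuinely non-formal point is this last passage from ``$\psi_j$ an isomorphism'' to ``$\tif_j\in\PGL_2(\bbr)^+$'': it rests on the compactness of $C$ and the rigidity of the hyperbolic metric within its conformal class, and it is precisely here that the affine case (Theorem A2) will demand extra care, replacing $\mathcal{M}_j$ by a logarithmic extension and controlling the behaviour of $\tif_j$ near the cusps.
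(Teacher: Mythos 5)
Your argument is correct, and in the decisive implication (iii)$\Rightarrow$(i) it takes a genuinely different route from the paper's. The paper proves Theorem~A1 as Corollary~\ref{CriteriaMEWithM}: it passes to a finite unramified cover $C'\to C$ attached to a choice of $\tilde{\varGamma }'\in\tilde{\Sigma }(G)$ so that the square root $\mathcal{L}_{\varrho }$ of $\mathcal{M}_{\varrho }$ is available, interprets $f^{\ast }\mathcal{L}_{\varrho }$ as a theta characteristic, and then applies Theorem~\ref{CriteriaModularEmbedding}, i.e.\ Proposition~\ref{CharacterisationMaximalHiggs}, whose step from ``maximal Higgs field'' to ``period map is a biholomorphism'' is quoted from Viehweg--Zuo; finally it descends the resulting isomorphism back to $C$. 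You stay on $C$ and work directly with $\mathcal{M}_j$: your $\psi_j$ (summand inclusion followed by the codifferential) is the Higgs field in disguise, your non-vanishing argument (constant $\tif_j$ forces $\varphi_j(\varDelta )$ into a point stabiliser, so $f^{\ast }\mathcal{M}_j$ is flat unitary of degree $0$) is the elementary counterpart of the paper's ``zero Higgs field gives a flat subsystem of degree zero'', and the equal-degree argument making $\psi_j$ an isomorphism matches (v)$\Rightarrow$(iii) of Proposition~\ref{CharacterisationMaximalHiggs}. Where you genuinely diverge is the final step: instead of the Higgs/period-map lemma you pull back the hyperbolic metric along the immersion $\tif_j$, use uniqueness of the conformal curvature $-1$ metric on the compact quotient to get $\tif_j^{\ast }\eta =\eta$, and conclude $\tif_j\in\PGL_2(\bbr )^+$. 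Two small points deserve a word: passing from ``$\tif_j$ is a local isometry'' to ``$\tif_j$ is a global isometry'' needs either completeness plus the covering argument or simply the equality case of Schwarz--Pick (which is what Proposition and Definition~\ref{Def:ME} invokes), and the exclusion of genus $\le 1$ that you make tacitly is also present in the paper's formal statement. What the paper's route buys is the finer information (theta characteristics and their matching under $\Theta^{\mathrm{uni}}(\varDelta )\leftrightarrow\Theta (C)$) and machinery reused verbatim in the affine case of Theorem~A2; what your route buys is a self-contained, cover-free and more elementary proof in the projective case.
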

This is Corollary~\ref{CriteriaMEWithM} below. An extension of Theorem~A1, with some additional equivalent conditions, is given and proved below as Theorem~\ref{CriteriaModularEmbedding}.

Theorem~A1 is proved using simple special cases of ideas from C.~Simpson's correspondence between Higgs bundles and local systems. The line bundles $\mathcal{M}_j$ come with distinguished square roots $\mathcal{L}_j$ that can be viewed as bundles of certain modular forms for $\varGamma$, see Remark~\ref{LRhoAndModularForms} below. Phrased more abstractly, they are automorphic line bundles. For a map $f$ covered by a modular embedding, $f^{\ast }\mathcal{L}_j$ is then a theta characteristic, hence there is a natural isomorphism $\vartheta_j\colon f^{\ast }\mathcal{L}_j\to f^{\ast }\mathcal{L}_j^{-1}\otimes \omega_C^1$, i.e.\ a \emph{maximal Higgs field}. This field can also be constructed without the assumption that $f$ is covered by a modular embedding, and its maximality is equivalent to $f$ being covered by a modular embedding.

In the affine case we obtain a very similar result. Then the line bundles $f^{\ast }\mathcal{L}_j$ can be constructed in a different way. If $C$ is affine and $f\colon C\to S$ is covered by a modular embedding then $S$ is automatically a \emph{Hilbert--Blumenthal variety}, i.e.\ a quaternionic Shimura variety derived from the quaternion algebra $B=\mathrm{M}_2(F)$ (see Proposition~\ref{Prop:AffineImpliesHB} below). These varieties are moduli spaces for abelian varieties with real multiplication by $F$, hence a morphism $f\colon C\to S$ (modular embedding or not) corresponds to a family $p_C\colon A_C\to C$ of such abelian varieties. To such a family we attach its \emph{Hodge bundle} $p_{C,\ast }\Omega_{A_C/C}^1$, a vector bundle on $C$ with a $\bbq$-linear action by~$F$. Fixing an archimedean place $\varrho_j$ of $F$ we denote the $\varrho_j$-eigenspace in the Hodge bundle by $\mathcal{E}$; this is an algebraic line bundle on $C$.

Letting $\Cbar$ be a projective completion of $C$ with boundary $D$ the theory of \emph{variations of Hodge structure} provides us with a canonical extension $\mathcal{E}_{\mathrm{ext}}$ of $\mathcal{E}$ to~$\Cbar$. Our main geometric result in this case is then the following.
\begin{ThrmATwo}
Let $C=\Cbar\smallsetminus D$ be a smooth affine curve, let $p_C\colon A_C\to C$ be a family of abelian varieties with real multiplication by $F$, let $f\colon C\to S$ be the corresponding regular map where $S$ is a Hilbert--Blumenthal variety, and let $\varrho_j\colon F\to \bbr$ be an archimedean place. Let $\eta_{\mathrm{ext}}$ and $\mathcal{E}_{\mathrm{ext}}$ be as above. Then the following are equivalent:
\begin{enumerate}
\item $f$ is covered by a modular embedding for $\varrho$.
\item $\mathcal{E}_{\mathrm{ext}}$ is a logarithmic theta characteristic on~$C$, i.e.\ its square is isomorphic to $\omega_{\Cbar }(\log D)$.
\item The degree of $\mathcal{E}_{\mathrm{ext}}$ is $-\frac 12$ times the Euler characteristic of~$C$.
\end{enumerate}
\end{ThrmATwo}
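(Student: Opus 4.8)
The plan is to translate the statement into the language of (parabolic) Higgs bundles and run essentially the argument behind Theorem~A1 (Corollary~\ref{CriteriaMEWithM}) and Theorem~\ref{CriteriaModularEmbedding}, now in the logarithmic setting dictated by the cusps~$D$. First I would note that the family $p_C\colon A_C\to C$ carries the polarised weight-one variation of Hodge structure $\bbv=R^1p_{C,\ast}\bbq$ with its $\bbq$-linear $F$-action. Extending scalars along $\varrho_j$ and passing to the relevant eigencomponent yields a rank-two $\bbr$-variation $\bbv_j$ on~$C$; it is polarised by the Riemann form (which pairs distinct eigencomponents to zero), so $\det\bbv_j$ is a Tate twist of the trivial local system, and the canonical extension $\bbv_{j,\mathrm{ext}}$ over $\Cbar$ has degree~$0$. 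Its $(1,0)$-subbundle is exactly $\mathcal{E}_{\mathrm{ext}}$; since the Hodge line is isotropic the polarisation identifies the quotient with $\mathcal{E}_{\mathrm{ext}}^{-1}$, and the graded of the Gauss--Manin connection is the Kodaira--Spencer map, a Higgs field
\begin{equation*}
\theta\colon\mathcal{E}_{\mathrm{ext}}\longrightarrow\mathcal{E}_{\mathrm{ext}}^{-1}\otimes\omega_{\Cbar}(\log D).
\end{equation*}
Moreover, since $A_C=f^{\ast}A_S$ for the universal family $A_S$ over the Hilbert--Blumenthal variety~$S$, one has $\mathcal{E}=f^{\ast}\mathcal{L}_j$ for the automorphic line bundle $\mathcal{L}_j$ of Remark~\ref{LRhoAndModularForms}, so $\theta$ is the Higgs field of the automorphic picture.

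I would then close the cycle (i)$\Rightarrow$(ii)$\Rightarrow$(iii)$\Rightarrow$(i). For (i)$\Rightarrow$(ii): a modular embedding for $\varrho_j$ covering $f$ satisfies $\mathrm{pr}_j\circ\tif=\id_{\bbh}$, so the period map of $\bbv_j$ is the identity of~$\bbh$; hence $\bbv_j$ is the uniformising variation of $C=\varDelta\backslash\bbh$, its period map is a local isometry for the Hodge and hyperbolic metrics, and therefore $\theta$ is an isomorphism of line bundles --- a \emph{maximal Higgs field}. That isomorphism is itself an identification $\mathcal{E}_{\mathrm{ext}}^{\otimes 2}\cong\omega_{\Cbar}(\log D)$, which is~(ii). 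The step (ii)$\Rightarrow$(iii) is the degree count $2\deg\mathcal{E}_{\mathrm{ext}}=\deg\omega_{\Cbar}(\log D)=2g(\Cbar)-2+\#D=-\chi(C)$.

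The substantive step is (iii)$\Rightarrow$(i). Here the key input is the Arakelov-type inequality for Hodge bundles of variations of Hodge structure over a quasi-projective curve --- due to Faltings in the compact case, and in the logarithmic form needed here to Peters, Jost--Zuo, Viehweg--Zuo and others --- which gives $\deg\mathcal{E}_{\mathrm{ext}}\le-\tfrac12\chi(C)$, with equality precisely when $\theta$ is an isomorphism. Granting (iii) we are in the equality case, so $\theta$ is a maximal Higgs field. By the rigidity of the uniformising variation --- via Simpson's correspondence between parabolic Higgs bundles and local systems, or directly by comparing the harmonic metric with the hyperbolic one --- the local system $\bbv_j$ is, up to isogeny, the uniformising $\bbr$-variation of $C=\varDelta\backslash\bbh$, so its period map is the identity of~$\bbh$. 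Feeding this back into the monodromy of the ambient variation $\bbv$ into~$\varGamma$ then produces a homomorphism $\varphi\colon\varDelta\to\varGamma$ with $\mathrm{pr}_j\circ\varphi=\id$ together with a $\varphi$-equivariant holomorphic $\tif\colon\bbh\to\bbh^r$, that is, a modular embedding covering~$f$.

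I expect this last step to be the main obstacle, in two respects. First, the whole Higgs-bundle dictionary --- canonical extensions, the Arakelov inequality, its equality case, parabolic (semi)stability, and the rigidity statement --- must be carried through carefully in the parabolic setting on the open curve~$C$, in particular ensuring that the parabolic weights of $\mathcal{E}_{\mathrm{ext}}$ at the cusps are normalised so that the integer appearing in (iii) is its ordinary degree. Second, the variation-theoretic conclusion ``$\bbv_j$ is uniformising up to isogeny'' has to be upgraded to an honest modular embedding, matching the distinguished archimedean place $\varrho_j$ with the first coordinate and extracting a genuine group homomorphism into~$\varGamma$ rather than one defined only up to conjugacy. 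By contrast (ii)$\Leftrightarrow$(iii) modulo the Higgs formalism is routine, and since the modular-embedding condition is projective the residual ambiguity of twisting $\bbv_j$ by a finite-order unitary character is harmless.
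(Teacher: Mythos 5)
Your proposal is correct and follows the same overall strategy as the paper: identify $\mathcal{E}_{\mathrm{ext}}$ with the Hodge subbundle of the canonical extension of the $\varrho$-eigencomponent $f^{\ast}\bbw_{\varrho}$, and run everything through the equivalence ``degree $=-\tfrac12\chi(C)$ $\Leftrightarrow$ maximal Higgs field $\Leftrightarrow$ period map a biholomorphism $\Leftrightarrow$ modular embedding''. Where you differ is in how the hard direction is justified, and your worries about the ``main obstacle'' are largely unnecessary under the paper's normalisation. The paper (Theorem~\ref{GaussManinCriterion}) builds into its statement that one may first pass to a finite unramified covering so that the monodromy at the cusps is \emph{unipotent}; then Deligne's canonical extension is the relevant object, its ordinary degree is the integer in (iii), and no parabolic weights or parabolic (semi)stability enter at all. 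With that in place, the step ``degree equality $\Rightarrow$ maximal Higgs'' is not the equality case of a logarithmic Arakelov inequality but the elementary argument inside Proposition~\ref{CharacterisationMaximalHiggs}: if $\vartheta=0$ the Hodge line would underlie a flat subsystem and have degree zero, contradicting $\chi(C)<0$; hence $\vartheta$ is a nonzero section of the degree-zero bundle $\underline{\Hom}(\mathcal{L},\mathcal{L}^{-1}\otimes\omega_{\Cbar}(\log D))$ on the projective curve $\Cbar$ and so has no zeros. Likewise ``maximal Higgs $\Rightarrow$ period map biholomorphic'' is quoted from \cite[Lemma~2.1]{MR2106125} rather than derived from Simpson's parabolic correspondence or harmonic-metric rigidity, though these are of course the same circle of ideas. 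Finally, your closing remark that the residual sign/character ambiguity is harmless is exactly how the paper handles it: the modular-embedding condition lives in $\PGL_2(\bbr)^+$, and the lift is organised through the bijection between unipotent theta characteristics of $\pi_1(C^{\an})$ and logarithmic theta characteristics on $C$, so the monodromy conjugacy produced by maximality already yields the required homomorphism $\varphi$ with $\mathrm{pr}_{\varrho}\circ\varphi$ the identity.
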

This is proved as Theorem~\ref{GaussManinCriterion} below. Note the similarity to Theorem~A1: the restriction of $\mathcal{E}_{\mathrm{ext}}$ to $C$ is $f^{\ast }\mathcal{L}_j$, a square root of~$f^{\ast }\!\mathcal{M}_j$. Compare also Remark~\ref{Rem:AOneVsATwo} below.

There are other results that express a seemingly transcendental condition (as being covered by a modular embedding presents itself) on a regular map between algebraic varieties in a purely algebro-geometric language and that also proceed via Simpson's correspondence. To the present author's knowledge, this idea was first used in \cite{MR2106125} to study Shimura curves in the moduli space $\mathcal{A}_g$ of principally polarised abelian varieties, then for Teichm\"uller curves in \cite{MR2188128} and for Kobayashi geodesics in $\mathcal{A}_g$ in~\cite{MR2772554}.

\subsubsection*{Field automorphisms}
A consequence of Theorems A1 and A2 is that the property of being covered by a modular embedding is stable under field automorphisms of~$\bbc$. For $\tau\in\Aut\bbc$ the variety $\tau S$ is again a quaternionic Shimura variety and the bundle $\tau\mathcal{M}_j$ occurs again as a summand of the cotangent bundle. This implies:
\begin{ThrmB}
Let $C$ and $S$ be as in Theorem A1 or A2, let $f\colon C\to S$ be covered by a modular embedding with respect to $\varrho_j\colon F\to\bbr$ and let $\tau\in\Aut\bbc$. Then $\tau f\colon\tau C\to \tau S$ is covered by a modular embedding with respect to $\tau\circ\varrho_j$.
\end{ThrmB}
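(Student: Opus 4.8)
The plan is to deduce Theorem~B directly from the algebro-geometric criteria of Theorems~A1 and~A2, exploiting that those criteria in their numerical form --- condition~(iii) in each --- refer only to the degree of a pulled-back line bundle and the (topological) Euler characteristic of a curve, neither of which changes when one applies an abstract field automorphism $\tau\in\Aut\bbc$. So the whole argument reduces to checking that $\tau$ carries the data appearing in Theorem~A1 (resp.~A2) to data of the same shape, now attached to the place $\tau\circ\varrho_j$, and that the relevant numerical invariants are preserved; the conclusion then follows from the implication (iii)$\Rightarrow$(i).

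I would treat the projective case (Theorem~A1) first. Conjugation by $\tau$ turns $C$ into a smooth projective complex curve $\tau C$ of the same genus, since the genus equals $h^{0}(\omega_{C})$, whence $\chi(\tau C)=\chi(C)$. By the discussion preceding the statement, $\tau S$ is again a quaternionic connected Shimura variety and $\tau\mathcal{M}_j$ occurs again as a line-bundle summand of its cotangent bundle; unwinding the construction, $\tau\mathcal{M}_j$ is precisely the summand attached to the archimedean place $\tau\circ\varrho_j$ of $F$ (note that $\tau\circ\varrho_j$ is again a real place, $F$ being totally real), which is in particular a place at which the quaternion algebra underlying $\tau S$ is unramified, so Theorem~A1 is applicable to $\tau f$ with respect to $\tau\circ\varrho_j$. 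Now $(\tau f)^{\ast}(\tau\mathcal{M}_j)=\tau(f^{\ast}\mathcal{M}_j)$ by functoriality of base change along $\tau$, and the degree of a line bundle on a projective curve is unaffected by $\tau$ (it can be read off from the Hilbert polynomial). Using the implication (i)$\Rightarrow$(iii) of Theorem~A1 for $f$ we obtain $\deg(\tau f)^{\ast}(\tau\mathcal{M}_j)=\deg f^{\ast}\mathcal{M}_j=-\chi(C)=-\chi(\tau C)$, which is condition~(iii) of Theorem~A1 for $\tau f$ and the place $\tau\circ\varrho_j$; then (iii)$\Rightarrow$(i) shows that $\tau f$ is covered by a modular embedding with respect to $\tau\circ\varrho_j$.

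The affine case (Theorem~A2) runs in parallel. One checks that $\tau C$ is again a smooth affine curve whose smooth completion has the same genus and whose boundary has the same cardinality, so $\chi(\tau C)=\chi(C)$; that conjugation by $\tau$ carries the family $p_C\colon A_C\to C$ of abelian varieties with real multiplication by $F$ to a family of the same kind, since $\tau$ is a functor preserving the relative abelian scheme together with its $F$-action; that $\tau$ sends the $\varrho_j$-eigenspace $\mathcal{E}$ of the Hodge bundle to the $(\tau\circ\varrho_j)$-eigenspace of the Hodge bundle of the conjugate family --- indeed, if $x\in F$ acts on $\mathcal{E}$ by multiplication by $\varrho_j(x)$, then by $\bbc$-semilinearity of $\tau$ it acts on $\tau\mathcal{E}$ by $\tau(\varrho_j(x))=(\tau\circ\varrho_j)(x)$; and that $\tau$ commutes with formation of the canonical extension across $D$, since that extension is singled out among all extensions by the condition that the eigenvalues of the local monodromy lie in $[0,1)$, and the local monodromy is a topological datum. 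Granting this, $\deg\tau\mathcal{E}_{\mathrm{ext}}=\deg\mathcal{E}_{\mathrm{ext}}=-\frac12\chi(C)=-\frac12\chi(\tau C)$ by Theorem~A2~(i)$\Rightarrow$(iii) for $f$, and Theorem~A2~(iii)$\Rightarrow$(i) for $\tau f$ yields the conclusion.

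The routine content here is the transport of the numerical conditions. The step I expect to cost real work, and which I would isolate as a separate lemma based on the adelic description of quaternionic Shimura varieties and their automorphic bundles over number fields (the formalism announced in the introduction), is the assertion that $\tau S$ is again such a Shimura variety with $\tau\mathcal{M}_j$ the cotangent summand belonging to $\tau\circ\varrho_j$. In the affine case there is the additional, slightly delicate, point that $\tau$ commutes with the canonical extension of the Hodge bundle; the cleanest way to handle it is to recognise this canonical extension as the pullback, along a morphism extending $f$, of the Hodge bundle on a toroidal (or minimal) compactification of the Hilbert--Blumenthal variety, so that the entire picture is algebraic and defined over $\qbar$ and $\tau$-equivariance is automatic. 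The Shimura-theoretic bookkeeping is, I expect, the main obstacle --- but it is precisely what has been announced as the ``adelic formalism for modular embeddings''.
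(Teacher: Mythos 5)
Your overall strategy coincides with the paper's: in both cases Theorem~B is reduced to the algebro-geometric criteria (Theorem~\ref{CriteriaModularEmbedding}/Corollary~\ref{CriteriaMEWithM} for projective $C$, Theorem~\ref{GaussManinCriterion} for affine $C$), and the genuinely Shimura-theoretic input --- that $\tau S$ is again a quaternionic Shimura variety and that $\tau\mathcal{M}_j$ (equivalently $\tau\mathcal{L}_{\varrho_j}$) is the cotangent summand attached to $\tau\circ\varrho_j$ --- is the Langlands--Milne--Shih theory, which in the paper is exactly Proposition~\ref{ExactShapeOfTwistedGroup} together with Theorem~\ref{ConjugatesOfAutomorphicLineBundles}, quoted from Shih and Milne rather than reproved; so your decision to isolate that as a deferred lemma matches the text, and your use of the numerical condition (degree equals $-\chi$, resp.\ $-\tfrac12\chi$) instead of the line-bundle isomorphism is an inessential variant, since degrees and Euler characteristics are indeed $\tau$-invariant for the reasons you give.

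The one step whose justification does not work as written is the $\tau$-equivariance of the canonical extension in the affine case. First, the extension is singled out by a condition on the eigenvalues of the \emph{residue} of the logarithmic connection, not of the local monodromy (whose eigenvalues lie on the unit circle). Second, and more seriously, the appeal to local monodromy being ``a topological datum'' proves nothing about invariance under $\tau$: an abstract field automorphism is not continuous, and $(\tau C)^{\an}$ need not be homeomorphic to $C^{\an}$, so topological invariants are exactly the kind of data one cannot transport this way. Your fallback --- realising $\mathcal{E}_{\mathrm{ext}}$ as the pullback of a Hodge bundle on a compactified Hilbert--Blumenthal variety along an extension of $f$ --- is precisely the route that Remark~\ref{Rem:AOneVsATwo} declines as technically much more difficult. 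The paper's fix is short: after passing to a finite unramified cover so that the monodromy at the cusps is unipotent (harmless, since being covered by a modular embedding passes up and down finite covers), the Deligne extension is characterised purely algebraically as the unique extension to an algebraic vector bundle carrying an algebraic logarithmic connection with nilpotent residues (Remark~\ref{AlgebraicStructuresOnHodgeSubbundles}); relative de Rham cohomology, the Gauss--Manin connection, the $F$-eigenspace decomposition and nilpotency of residues are all algebraic notions, so $\tau$ carries the entire package to the corresponding package for the conjugate family, and your degree computation then closes the argument exactly as you intend.
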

This is Theorem~\ref{ThmConjModularEmbSimple} below. A more refined version, employing the adelic formalism of Shimura varieties and thus giving a more explicit version of this result, is formulated as Theorem~\ref{ThmConjModularEmbAdelic}. It makes use of the explicit theory of Galois conjugates of Shimura varieties in the form conjectured by Langlands \cite{MR546619} and proved by Milne and Shih \cite{MR717596, MilneShih1982a}, and its extension by Milne to automorphic vector bundles~\cite{MR931206}. 

\subsubsection*{Dessins d'enfants} Theorem B (or a more precise form thereof) has applications to the theory of dessins d'enfants. Let $p,q,r$ be positive integers or $\infty$ such that
$$\frac 1p+\frac 1q+\frac 1r<1$$
(where we set $\frac 1\infty =0$), and let $\varDelta_{p,q,r}\subset\PGL_2(\bbr )^+$ be the triangle group of signature $(p,q,r)$. This group admits a modular embedding to an arithmetic group defined by a certain quaternion algebra over the field
$$K_{p,q,r}=\bbq (\tr\varDelta_{p,q,r})=\bbq\bigg(\!\cos\frac\pi p,\cos\frac\pi q,\cos\frac\pi r\bigg),$$
and we may define principal congruence subgroups $\varDelta_{p,q,r}(\mathfrak{a})$ for every integral ideal $\mathfrak{a}$ of~$K_{p,q,r}$. The natural projection
$$\varDelta_{p,q,r}(\mathfrak{a})\backslash\bbh\to\varDelta_{p,q,r}\backslash\bbh\cong\bbp^1(\bbc )$$
is then unramified outside $\{ 0,1,\infty \}\subset\bbp^1(\bbc )$, i.e.\ it is a \emph{Bely\u{\i} map} and therefore corresponds to a dessin $\mathcal{D}_{p,q,r}(\mathfrak{a})$.
\begin{ThrmC}
The action by $\Gal (\qbar /\bbq )$ on $\mathcal{D}_{p,q,r}(\mathfrak{a})$ is given by
$$\tau\mathcal{D}_{p,q,r}(\mathfrak{a})=\mathcal{D}_{p,q,r}(\tau\mathfrak{a}).$$
\end{ThrmC}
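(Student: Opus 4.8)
The idea is to exhibit each Bely\u{\i} map $\mathcal D_{p,q,r}(\mathfrak a)$ as the pullback, along one fixed modular embedding, of a covering in a tower of quaternionic Shimura varieties, and then to read off the Galois action from the adelic refinement of Theorem~B. First, being a Bely\u{\i} map, $\mathcal D_{p,q,r}(\mathfrak a)$ is defined over $\qbar$, so it suffices to compute $\tau\mathcal D_{p,q,r}(\mathfrak a)$ for an arbitrary $\tau\in\Aut\bbc$. Fix the modular embedding $\varphi\colon\varDelta_{p,q,r}\to\varGamma$ into the arithmetic group attached to $B/K_{p,q,r}$ with respect to the geometric archimedean place $\varrho_1$, and let $f\colon C\to S$ be the induced morphism, with $C$ the orbifold quotient $\varDelta_{p,q,r}\backslash\bbh\cong\bbp^1$ (the three orbifold points lying over $0,1,\infty$) and $S=\varGamma\backslash\bbh^r$ the associated connected quaternionic Shimura variety. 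Since $\varDelta_{p,q,r}(\mathfrak a)=\varphi^{-1}(\varGamma(\mathfrak a))$ and the preimage of a normal subgroup is normal, $\varDelta_{p,q,r}(\mathfrak a)$ is normal in $\varDelta_{p,q,r}$, and a short computation with orbifold fundamental groups shows that every connected component of the pullback $f^{\ast}(S(\mathfrak a)\to S)$, where $S(\mathfrak a)=\varGamma(\mathfrak a)\backslash\bbh^r$, is isomorphic as a covering of $C$ to $C(\mathfrak a)=\varDelta_{p,q,r}(\mathfrak a)\backslash\bbh$, hence gives back the dessin $\mathcal D_{p,q,r}(\mathfrak a)$. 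It therefore suffices to follow the pair $\bigl(f\colon C\to S,\ S(\mathfrak a)\to S\bigr)$ under $\tau$.

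Next I would apply the adelic refinement of Theorem~B (Theorem~\ref{ThmConjModularEmbAdelic}) to $\tau$ and $f$. It identifies $\tau f\colon\tau C\to\tau S$ explicitly with the modular embedding obtained by conjugating the adelic datum of $f$; in particular $\tau f$ is again a modular embedding, now with respect to $\tau\circ\varrho_1$, and $\tau S$ is the quaternionic Shimura variety attached to the conjugate quaternion algebra $B^{\tau}$ over $K_{p,q,r}$. Since $C\cong\bbp^1$ carries a $\bbq$-rational orbifold structure, $\tau C=C$; and — using that $\tau f$ is again modular, which is precisely what Theorems~A1 and~A2 guarantee — the rigidity of the triangle group together with the essential uniqueness of its modular embedding pins $\tau f$ down as the canonical modular embedding of $\varDelta_{p,q,r}$ into $\tau S$. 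The problem is thereby reduced to recognising, inside the conjugated Shimura tower over $\tau S$, the covering $\tau\bigl(S(\mathfrak a)\to S\bigr)$.

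For the last step I would translate $\varGamma(\mathfrak a)$ into the compact open subgroup $K(\mathfrak a)\subset(B\otimes\bba_f)^{\times}$ cut out by the congruence $\equiv 1\pmod{\mathfrak a\widehat{\mathscr O}}$, and then invoke the description — built into Theorem~\ref{ThmConjModularEmbAdelic}, and ultimately into Langlands's conjecture as proved by Milne and Shih together with Milne's extension to automorphic vector bundles — of how $\tau$ acts on the tower $\{S(\mathfrak a)\}_{\mathfrak a}$: the action is governed by the reciprocity morphism of the Shimura datum composed with the Artin reciprocity map of the reflex field $E\subseteq K_{p,q,r}$, and for quaternionic data it is computed through the reduced norm $(B\otimes\bba_f)^{\times}\to\bba_{K_{p,q,r},f}^{\times}$. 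What one must extract from this is that $\tau$ carries the layer of level $\mathfrak a$ to the layer of level $\tau\mathfrak a$, where $\tau$ acts on integral ideals of $K_{p,q,r}$ through $\Gal(K_{p,q,r}/\bbq)$ — which is legitimate because $K_{p,q,r}=\bbq(\cos\tfrac\pi p,\cos\tfrac\pi q,\cos\tfrac\pi r)$ is abelian over $\bbq$. Granting this, $\tau\bigl(S(\mathfrak a)\to S\bigr)$ is the principal congruence covering of $\tau S$ of level $\tau\mathfrak a$; pulling back along $f=\tau f$ and using the first step then yields $\tau\mathcal D_{p,q,r}(\mathfrak a)=\mathcal D_{p,q,r}(\tau\mathfrak a)$.

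The main obstacle is precisely this last identification: matching the abstract action of $\tau$ on the Shimura tower — encoded by the reciprocity morphism and the Artin map of $E$ — against the elementary recipe $\mathfrak a\mapsto\tau\mathfrak a$ on ideals of $K_{p,q,r}$. This requires a careful analysis of the reduced norm on the adelic level subgroups and of the resulting interplay with the idele class group of $K_{p,q,r}$; attention to the relation between $E$ and $K_{p,q,r}$ and to how the archimedean ramification of $B$ is permuted under $\Gal(\qbar/\bbq)$ (the same phenomenon that makes $\tau f$ a modular embedding for the \emph{shifted} place $\tau\circ\varrho_1$ into the Shimura variety of $B^{\tau}$); the correct class-field-theoretic normalisations; and a check that the conjugate modular embedding equipped with the transported level structure reproduces precisely the congruence subgroup $\varDelta_{p,q,r}(\tau\mathfrak a)$ rather than some twist of it. By comparison, the rigidity and uniqueness input of the second step is relatively formal, though it does rely on the geometric criteria of Theorems~A1 and~A2 to know that conjugation preserves the property of being covered by a modular embedding.
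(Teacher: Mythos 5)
Your strategy (realise the dessin inside the congruence tower over the Shimura variety, conjugate via the adelic refinement of Theorem~B, use rigidity of the triangle group) is broadly the paper's strategy, but the step you yourself flag as the ``main obstacle'' is precisely the heart of the proof, and you leave it unproved; moreover the route you propose for it is misdirected. The identification of $\tau\bigl(S(\mathfrak a)\to S\bigr)$ with the layer of level $\tau\mathfrak a$ is not obtained in the paper from the reciprocity morphism of the Shimura datum composed with the Artin map of the reflex field: that machinery describes how automorphisms fixing the reflex field act on geometrically connected components of the canonical model, whereas here one needs the action of an arbitrary $\tau\in\Aut\bbc$ on the connected tower. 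In the Milne--Shih framework this action is already encoded in the equivariance of $\varphi^0_{\tau ,x}$ for $\eta\colon\mathcal{A}(G)\to\mathcal{A}(\taux G)$, and the decisive point is that $\eta$ comes from an isomorphism of $\bba_F^f$-algebras $B\otimes_F\bba_F^f\to\taux B\otimes_F\bba_F^f$ (the trivialisation $\sp (\tau )$ of the torsor). Hence it carries $\hat{\mathscr{O}}$ to $\widehat{\taux\mathscr{O}}$ and preserves the congruence condition $\equiv 1\bmod\mathfrak{a}$ verbatim, giving $\taux (\varGamma (\mathfrak a))=(\taux\varGamma )(\mathfrak a)$ with the ideal of the \emph{abstract} field $F$ unchanged (Example~\ref{ExampleConjugatesCongruenceSubgroup}); no idelic or reduced-norm computation enters. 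The shift $\mathfrak a\mapsto\tau\mathfrak a$ in the statement is pure bookkeeping caused by the change of embedding $\varrho\mapsto\tau\circ\varrho$ (the trace field of $\tauy\!\varDelta$ is $\tau (K)$); this is Corollary~\ref{Cor:ConjugationOfCSG}, whose proof in addition needs the identity $\taux\mathscr{O}=\mathfrak{o}_K\langle\tauy\varphi (\tauy\!\varDelta )\rangle$, obtained by an adelic density argument, and the hypothesis (\ref{MELiesInImageOfGQ}), which holds for triangle groups. None of this appears in your sketch, so the central step is genuinely missing, and the class-field-theoretic analysis you propose would at best re-derive what is formal once the $\bba_F^f$-linearity of $\eta$ is used.

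A secondary point: you invoke ``the essential uniqueness of the modular embedding'' of $\varDelta_{p,q,r}$ to pin down $\tau f$; no such uniqueness is proved in the paper, nor is it needed. The paper only uses group-level statements: $\tauy\!\varDelta_{p,q,r}$ is conjugate to $\varDelta_{p,q,r}$ (Proposition~\ref{ShapePreserved} together with the uniqueness, up to conjugacy, of a lattice with the triangle presentation), and then $\tauy (\varDelta_{p,q,r}(\mathfrak a))=(\tauy\!\varDelta_{p,q,r})(\tau\mathfrak a)$. Since the dessin is determined by the conjugacy class of the congruence subgroup inside the triangle group, with the elliptic orders $p,q,r$ matched to $0,1,\infty$ and preserved by $\tau$ by construction of $\tauy\!\varDelta$, the theorem follows without identifying $\tau f$ itself; your appeal to uniqueness is an unestablished input that should either be proved or, better, circumvented as above.
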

This is Theorem~\ref{Theo:ActionOnCongruenceDessins} below. Several special cases have been proved before by rather different methods, cf.\ the discussion following Theorem~\ref{Theo:ActionOnCongruenceDessins}.

\subsubsection*{Outline}

In Sections 2 and 3 we summarise known or essentially known material that is used in proving our main theorems but rather scattered throughout the literature, so we recall it here in phrasings suitable for our purposes. Section~2 contains a discussion of variations of Hodge structure, Higgs fields and Simpson's correspondence in the very special one-dimensional cases that we need, and some results about their relation to geometry, most importantly Proposition~\ref{CharacterisationMaximalHiggs}, the main nontrivial result used to prove Theorems A1 and~A2. In Section~3 we recall the formalism of connected Shimura varieties and automorphic line bundles, with special emphasis on quaternionic Shimura varieties, which are those that can occur as targets for modular embeddings.

In Section~4 we introduce modular embeddings, discuss examples and recall some of their properties, leading to proofs of Theorems A1 and~A2. The effect of field automorphisms of $\bbc$ is discussed in Section~5, beginning with a summary of Deligne--Langlands--Milne--Shih's results on conjugates of Shimura varieties and automorphic bundles and continuing with applications to modular embeddings, including Theorems B and~C.

\subsubsection*{Acknowledgements}

This article started as a chapter in \cite{KucharczykVoight2015}. The author wishes to thank his coauthor, John Voight, for suggesting to publish it as a separate article, and J\"urgen Wolfart for encouraging him to try to prove Theorem~C using modular embeddings. Furthermore he wishes to thank these two people and in addition Ursula Hamenst\"adt, Martin M\"oller, Joachim Schwermer, Jakob Stix, Martin Westerholt-Raum and Jonathan Zachhuber for insightful conversations about the contents of this article and surrounding topics. Last but not least thanks go to Allan Hale for printing.

\subsubsection*{Notation}

For a field $k$ the symbol $\widebar{k}$ denotes an algebraic closure of~$k$. Specifically, $\qbar$ denotes \emph{the} algebraic closure of $\bbq$ in~$\bbc$. Complex conjugation, or its restriction to any subfield of $\bbc$, is denoted by~$\iota$. The profinite completion of $\bbz$ is denoted by $\hat{\bbz }=\prod_{p\text{ prime}}\bbz_p$ and the ring of finite rational ad\`{e}les is denoted by $\bba^f=\hat{\bbz }\otimes_{\bbz }\bbq$.

For a field extension $E/F$, Weil's restriction of scalars is denoted by $\Res_{E/F}$. The multiplicative group over a field $F$ is denoted by $\bbg_m=\bbg_{m,F}=\GL_{1,F}$. We often write $\bbt^E$ for $\Res_{E/F}\bbg_m$ when $F$ is clear from the context. For a linear algebraic group $G$ over a field $k$ we write $\mathrm{X}^{\ast }(G)=\Hom (G_{\widebar{k} },\bbg_{m,\widebar{k} })$ and $\mathrm{X}_{\ast }(G)=\Hom (\bbg_{m,\widebar{k} },G_{\widebar{k} })$, both regarded as $\Gal (\widebar{k}/k)$-modules.

For an algebraic group $G$ defined over a subfield $k$ of $\bbr$ we denote the connected component of the identity in $G(\bbr )$ by $G(\bbr )^+$, and we set $G(k)^+=G(k)\cap G(\bbr )^+$.

The group of $n$-th roots of unity (as an abstract group scheme, or as a subgroup of $\bbc$) is denoted by~$\mu_n$.

Holomorphic or algebraic vector bundles on a complex manifold or variety $X$ are denoted by latin letters such as $V$ when considered as fibre bundles $V\to X$ with fibrewise vector space structure, and by corresponding calligraphic letters such as $\mathcal{V}$ when considered as sheaves of $\mathcal{O}_X$-modules.

If $X$ is a variety over $\bbc$, the associated complex analytic space is denoted by $X^{\an }$.

Two different symbols for isomorphism are used: $X\simeq Y$ denotes the existence of an isomorphism $X\to Y$ and $X\cong Y$ denotes that a particular (usually canonical) isomorphism has been chosen.

If $f\colon X\to Y$ is a map and $A\subseteq X$ and $B\subseteq Y$ are subsets with $f(A)\subseteq B$, then $f|_A^B$ denotes the induced map $A\to B$, and $f|A$ means $f|_A^A$ when it makes sense.

For two sets $A$ and $B$ the symmetric difference is denoted by $A\triangle B=(A\cup B)\smallsetminus (A\cap B)$.

For an equivalence relation on a set $X$ the equivalence class containing $x\in X$ is denoted by $[x]$; for equivalence classes in products $X\times Y$ we write $[x,y]$ instead of $[(x,y)]$.

\section{Uniformisation and Higgs fields}

\noindent By the uniformisation theorem the universal cover $\tilde{X}$ of every finite type Riemann surface $X$ with negative Euler characteristic is biholomorphic to the upper half plane $\bbh=\{ z\in\bbc\mid\operatorname{Im} z>0\}$, hence $X$ is biholomorphic to a quotient $\varDelta\backslash\bbh$ where $\varDelta$ is a torsion-free lattice in the biholomorphism group $\Aut\bbh$. We identify this group with $\PGL_2(\bbr )^+$ acting via M\"obius transformations.\footnote{This is more commonly called $\operatorname{PSL}_2(\bbr )$, but we wish to avoid this notation since it wrongly suggests the existence of a linear algebraic group $\operatorname{PSL}_2$ whose group of real-valued points is $\PGL_2(\bbr )^+$.} The geometry behind this result is much subtler than it appears at first sight, see e.g.~\cite{MR700005, MR0207978}.

The upper half plane naturally occurs in many different contexts. For the purpose of the present work it is convenient to interpret it as a \emph{period domain for Hodge structures}.

\subsection{The upper half plane and variations of Hodge structure}\label{Sect:HodgeStructures}

We recall the basic definitions around Hodge structures, cf.~\cite{MR0498551, MR0282990, MR0382272}. Let $V$ be a finite-dimensional real vector space. A \emph{real Hodge structure} on $V$ is a decomposition of $V_{\bbc }=V\otimes_{\bbr }\bbc$ into complex vector subspaces,
$$V_{\bbc }=\bigoplus_{p,q\in\bbz }V^{p,q},$$
such that $V^{p,q}$ is the complex conjugate of $V^{q,p}$. The \emph{Hodge numbers} of this structure are the numbers $v^{p,q}=\dim_{\bbc }V^{p,q}$. There are two important data derived from this decomposition:
\begin{enumerate}
\item the \emph{weight decomposition} $V=\bigoplus_{n\in\bbz }V_n$ (direct sum of real vector spaces), where $V_{n,\bbc }=\bigoplus_{p+q=n}V^{p,q}$, and
\item the \emph{Hodge filtration} $\mathrm{F}^{\bullet }V_{\bbc }\colon\cdots\supseteq\mathrm{F}^{-1}V_{\bbc }\supseteq\mathrm{F}^0V_{\bbc }\supseteq\mathrm{F}^1V_{\bbc }\supseteq\cdots$ defined by $\mathrm{F}^pV_{\bbc }=\bigoplus_{p'\ge p}V^{p',q}$.
\end{enumerate}
The weight decomposition and the Hodge filtration together suffice to reconstruct the Hodge structure as
$$V^{p,q}=V_{\bbc ,p+q}\cap\mathrm{F}^pV_{\bbc }\cap\overline{\mathrm{F}^qV_{\bbc }}.$$
A Hodge structure is called \emph{pure of weight $n$} if $V=V_n$. The standard example of a Hodge structure of pure weight $n$ is $\mathrm{H}^n(X,\bbr )$ for a compact K\"ahler manifold~$X$.

These definitions can be rephrased in terms of representations of the \emph{Serre torus} $\bbs =\Res_{\bbc /\bbr }\bbg_m$ with $\bbs (\bbr )=\bbc^{\times }$. If $V$ is a finite-dimensional real vector space and $\bbs\to\GL (V)$ is a rational representation, we let $V^{p,q}$ be the subspace of $V_ {\bbc }$ where $z\in\bbc^{\times }=\bbs (\bbr )$ operates as multiplication by $z^p\widebar{z}^q$. This assignment defines an equivalence of categories between representations of $\bbs$ and real Hodge structures.

The weight decomposition corresponds to the homomorphism $w\colon\bbg_{m,\bbr }\to\bbs$ given on real-valued points by $r\mapsto r^{-1}$. Hence the weight decomposition is characterised by the fact that $w(r)$ operates as $r^{-n}$ on~$V_n$.

The Hodge filtration corresponds to  the homomorphism $\mu\colon\bbg_{m,\bbc }\to\bbs_{\bbc }$ given on complex-valued points by $\bbc^{\times }\to\bbc^{\times }\times\bbc^{\times }$, $z\mapsto (z,1)$. Here we use the unique isomorphism of complex Lie groups $\bbs (\bbc )\to\bbc^{\times }\times\bbc^{\times }$ such that the composition
$$\bbc^{\times }=\bbs (\bbr )\hookrightarrow\bbs (\bbc )\cong\bbc^{\times }\times\bbc^{\times }$$
is the identity on the first coordinate and complex conjugation on the second one.

For this reason, whenever we have a homomorphism $h\colon\bbs\to G$ where $G$ is a real algebraic group, we think of it as a `Hodge structure on $G$' and call $w_h=h\circ w\colon\bbg_{m,\bbr }\to G$ the `weight decomposition' and $\mu_h=h\circ\mu\colon\bbg_{m,\bbc }\to G$ the `Hodge filtration'.

Tensor products, duals, symmetric powers and alternating powers of Hodge structures are defined in the obvious ways. For each $n\in\bbz$ we define the \emph{Tate Hodge structure}\footnote{Not to be confused with Hodge--Tate structures, which are certain $p$-adic Galois representations.} $\bbr (n)$ of pure weight $-2n$ that corresponds to the representation $\bbs\to\GL (1)$ sending $z\in\bbc^{\times }=\bbs (\bbr )$ to~$\lvert z\rvert^{-n}$. Its underlying real vector space is usually written as $(2\pi\mathrm{i})^n\bbr$, and for nonnegative $n$ there is a canonical isomorphism $\bbr (-n)\cong\Hup^{2n}(\bbp^n(\bbc ),\bbr )$.

\subsubsection*{Polarisations}

For a real Hodge structure $V$ with corresponding representation $\varrho\colon\bbs\to\GL (V)$ the \emph{Weil operator} on $V$ is the real linear map $C\colon V\to V$ that acts as $\mathrm{i}^{p-q}$ on $V^{p,q}$. It can also be described as $\varrho (\mathrm{i})$ where $\mathrm{i}$ is interpreted as an element of $\bbc^{\times }=\bbs (\bbr )$.

Let $V$ be a real Hodge structure which is pure of weight~$n$. A \emph{polarisation} of $V$ is a morphism of Hodge structures $\psi\colon \bigwedge^2V\to\bbr (-n)$ for which the symmetric pairing
$$\Psi\colon V_{\bbr }\times V_{\bbr }\to\bbr ,\qquad (v,w)\mapsto (2\pi\mathrm{i})^n\psi (v,Cw),$$
is positive definite. The Hodge structures $\Hup^n(X^{\an },\bbr )$ for smooth projective varieties $X$ admit polarisations coming from ample line bundles on~$X$.

\subsubsection*{Variations}

If $X$ is a complex manifold, a \emph{variation of (real) Hodge structure} $\bbv$ on $X$ consists of a local system $\bbv_{\bbr }$ of real vector spaces on $X$ together with a Hodge structure on each fibre $\bbv_{\bbr }(x)$ such that the weight decomposition is induced by a decomposition of local systems $\bbv_{\bbr }=\bigoplus_{n\in\bbz }\bbv_{\bbr ,n}$, the Hodge filtration is induced by a filtration of holomorphic vector bundles $\mathrm{F}^{\bullet }\mathcal{V}\colon\cdots\supseteq\mathrm{F}^{-1}\mathcal{V}\supseteq\mathrm{F}^0\mathcal{V}\supseteq\mathrm{F}^1\mathcal{V}\supseteq\cdots$ and Griffiths's transversality condition $\nabla\mathrm{F}^p\mathcal{V}\subseteq\mathrm{F}^{p-1}\mathcal{V}\otimes\Omega_X^1$ holds. Here $\mathcal{V}=\bbv_{\bbr }\otimes_{\bbr }\mathcal{O}_X$ is the holomorphic vector bundle underlying $\bbv_{\bbc }$ and $\nabla\colon\mathcal{V}\to\mathcal{V}\otimes\Omega_X^1$ is the unique holomorphic integrable connection for which the sections of $\bbv_{\bbc }$ are flat.

Morphisms, tensor products and so forth are defined in the obvious way, as is the `constant variation' $\underline{V}_X$ associated with a Hodge structure~$V$. A \emph{polarisation} of a variation $\bbv$ of pure weight $n$ is a morphism of variations $\bigwedge^2\bbv\to\underline{\bbr (-n)}_X$ which is a polarisation fibrewise. Usually the existence of a polarisation on each weight component is included in the definition of a variation of Hodge structure.

Again the motivating example is derived from the cohomology of algebraic varieties: let $f\colon Y\to X$ be a smooth projective morphism of algebraic varieties, then there is a variation of real Hodge structure $\bbv$ with underlying local system $\bbv_{\bbr }=\mathrm{R}^nf_{\ast }\bbr$; the fibre at $x\in X$ is $\Hup^n(f^{-1}(x)^{\an },\bbr )$ equipped with its natural Hodge structure. These variations admit polarisations that are constructed from relatively ample line bundles on~$Y$.

\subsubsection*{The upper half plane as a period domain}

Let $W=\bbr^2$ with the symplectic form
\begin{equation}\label{StandardPolarisation}
\psi\colon W\times W\to\bbr (-1)=\frac{1}{2\pi\mathrm{i}}\bbr,\qquad \left({v_1\choose v_2}, {w_1\choose w_2}\right)\mapsto \frac{1}{2\pi\mathrm{i}}(v_1w_2-v_2w_1).
\end{equation}
Then $\bbh$ parameterises Hodge structures on $W$ for which $\psi$ is a polarisation and which have Hodge numbers $v^{1,0}=v^{0,1}=1$ and all other $v^{p,q}=0$. An easy calculation shows that all such Hodge structures have the form
$$\mathrm{F}^1W_{\bbc }=W^{1,0}=\bbc\cdot {z\choose 1}\quad\text{and}\quad W^{0,1}=\bbc\cdot {\widebar{z}\choose 1}$$
for a uniquely determined $z\in\bbh$.

These Hodge structures define a polarised variation of Hodge structure $\bbw$ on~$\bbh$, given by the following data:
\begin{enumerate}
\item the trivial local system of $\bbr$-vector spaces $\bbw_{\bbr }=\underline{W}$ on $\bbh$;
\item the symplectic form $\psi\colon\bigwedge^2\bbw_{\bbr }\to\underline{\bbr (-1)}$;
\item the filtration of the trivial holomorphic vector bundle $\mathcal{W}=\bbw_{\bbr }\otimes_{\bbr }\mathcal{O}_{\bbh}\cong\mathcal{O}_{\bbh}^2$ with $\mathrm{F}^p\mathcal{W}=\mathcal{W}$ for $p\le 0$,
$$\mathrm{F}^1\mathcal{W}\subset\mathcal{W},\qquad (\mathrm{F}^1\mathcal{W})_z=\bbc \cdot {z\choose 1},$$
and $\mathrm{F}^0\mathcal{W}=0$ for $p\ge 2$.
\end{enumerate}
The previous considerations show:
\begin{Lemma}
Let $Y$ be a complex manifold. Then there is a natural bijection between the following two types of objects:
\begin{enumerate}
\item polarised variations of real Hodge structure $\bbv$ on the trivial local system $\bbv_{\bbr }=\underline{\bbr^2}_{Y}$ with the symplectic form (\ref{StandardPolarisation}) with Hodge numbers $v^{1,0}=v^{0,1}=1$ and all other $v^{p,q}=0$;
\item holomorphic maps $f\colon Y\to\bbh$.
\end{enumerate}
This bijection is effectuated as follows: $f$ as in (ii) corresponds to the variation $f^{\ast }\bbw$ on $Y$, and a variation $\bbv$ corresponds to the \emph{period map}
$$f\colon Y\to \bbh,\qquad y\mapsto z\text{ such that }(\mathrm{F}^1\mathcal{V})_y=\bbc\cdot {z\choose 1}.$$
\end{Lemma}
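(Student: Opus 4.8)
The plan is to reduce the family statement to the pointwise one already recorded above, namely that the Hodge structures on $W=\bbr^2$ of the prescribed type which are polarised by $\psi$ correspond bijectively to the points of $\bbh$ via $z\mapsto\bigl(\mathrm{F}^1W_{\bbc}=\bbc\cdot{z\choose 1}\bigr)$. Granting this, both assignments in the lemma are defined fibrewise by that pointwise bijection: to $f$ one attaches $f^{\ast}\bbw$, whose fibre over $y$ is the Hodge structure with $\mathrm{F}^1$ equal to $\bbc\cdot{f(y)\choose 1}$, and to a variation $\bbv$ one attaches the map $y\mapsto z$ determined by $(\mathrm{F}^1\mathcal{V})_y=\bbc\cdot{z\choose 1}$. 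It then remains to check that each construction lands in the asserted class of objects and that the two are mutually inverse.

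First I would verify that $f\mapsto f^{\ast}\bbw$ is well defined. Pulling back along a holomorphic map preserves every piece of structure involved: the underlying local system of $f^{\ast}\bbw$ is the pullback of the trivial local system $\underline{W}$, hence again the trivial system $\underline{\bbr^2}_Y$; the polarisation pulls back to the same symplectic form $\psi$; the Hodge numbers are unchanged; and Griffiths transversality holds trivially here, since $\mathrm{F}^0\mathcal{V}=\mathcal{V}$ forces $\nabla\mathrm{F}^1\mathcal{V}\subseteq\mathcal{V}\otimes\Omega_Y^1=\mathrm{F}^0\mathcal{V}\otimes\Omega_Y^1$. So $f^{\ast}\bbw$ is an object of type (i), and in each fibre the attached Hodge structure is of the prescribed type.

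Next I would check that a variation $\bbv$ as in (i) yields a \emph{holomorphic} map. Pointwise, each fibre $\bbv_{\bbr}(y)=\bbr^2$ carries a Hodge structure of the prescribed type polarised by $\psi$, so the pointwise statement gives a unique $z(y)\in\bbh$ with $(\mathrm{F}^1\mathcal{V})_y=\bbc\cdot{z(y)\choose 1}$; in particular $\operatorname{Im}z(y)>0$ is forced by positivity of the polarisation. Holomorphy is a local question: $\mathrm{F}^1\mathcal{V}$ is a rank-one holomorphic sub-bundle of the trivial bundle $\mathcal{V}\cong\mathcal{O}_Y^2$, so on a neighbourhood of any point it is spanned by a nowhere-vanishing holomorphic section ${a\choose b}$; comparing with the fibrewise description shows that $b$ has no zeros and $z=a/b$, which is holomorphic.

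Finally, the two constructions are mutually inverse; here the only point to keep in mind is that the bijection is meant literally, i.e.\ as an equality of filtrations inside the fixed trivial bundle, not merely up to isomorphism. Starting from $f$, the period map of $f^{\ast}\bbw$ sends $y$ to the $z$ with $\bbc\cdot{z\choose 1}=(\mathrm{F}^1 f^{\ast}\mathcal{W})_y=\bbc\cdot{f(y)\choose 1}$, i.e.\ back to $f$. Starting from $\bbv$ with period map $f$, the variation $f^{\ast}\bbw$ has the same underlying local system $\underline{\bbr^2}_Y$ and the same polarisation $\psi$ as $\bbv$, while its Hodge filtration is the sub-bundle of $\mathcal{O}_Y^2$ whose fibre over $y$ is $\bbc\cdot{f(y)\choose 1}=(\mathrm{F}^1\mathcal{V})_y$; two holomorphic sub-bundles of a trivial bundle that agree in every fibre coincide, so $\mathrm{F}^1(f^{\ast}\mathcal{W})=\mathrm{F}^1\mathcal{V}$, and since the weight decomposition (trivial, of pure weight $1$) together with the Hodge filtration reconstructs the variation, $f^{\ast}\bbw=\bbv$. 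Compatibility with pullback — the naturality in $Y$ — is clear, as every construction above commutes with pulling back along holomorphic maps. There is no genuine obstacle in this lemma; it is a foundational compatibility, and the only mild care needed is in the holomorphy of the period map (where one uses that $\mathrm{F}^1\mathcal{V}$ is an honest holomorphic sub-bundle, not merely a fibrewise family of lines) and in reading "bijection" on the nose.
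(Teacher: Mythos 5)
Your proposal is correct and follows exactly the route the paper intends: the paper gives no separate proof beyond ``the previous considerations show,'' i.e.\ it relies on the pointwise identification of polarised Hodge structures on $(\bbr^2,\psi)$ with points of $\bbh$ and on the definition of $\bbw$, which is precisely what you spell out (including the two points genuinely worth checking, holomorphy of the period map via a local holomorphic frame of $\mathrm{F}^1\mathcal{V}$ and the on-the-nose inverseness inside the fixed trivial bundle).
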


The universal variation $\bbw$ on $\bbh$ admits an action by $\SL_2(\bbr )$ extending that on $\bbh$ by M\"{o}bius transformations: if $W=\bbh\times\bbr^2$ is the total space of $\bbw_{\bbr }$, we set
$$g\colon W\to W,\qquad (z,w)\mapsto (gz,gw),$$
for $g\in\SL_2(\bbr )$, where $z\mapsto gz$ is the usual operation by M\"{o}bius transformations and $w\mapsto gw$ is the usual matrix-by-vector multiplication. A direct computation shows that the action of $\SL_2(\bbr )$ on $\bbw_{\bbc }$ by $\bbc$-linear extension preserves the Hodge filtrations, and therefore this action can be viewed as an action on $\bbw$ as a polarised variation of real Hodge structure.

If $\varGamma\subset\SL_2(\bbr )$ is a torsion-free discrete subgroup, then $\bbw$ descends to a polarised variation of real Hodge structure $\bbw_{\varGamma }=\varGamma\backslash\bbw$ on $X_{\varGamma }=\varGamma\backslash\bbh$ which we call the \emph{Fuchsian variation} on $X_{\varGamma }$.

\begin{Remark}
If $\varGamma\subset\SL_2(A)$ for some subring $A\subseteq\bbr$, the Fuchsian variation admits a natural $A$-structure $\bbw_{\varGamma ,A}$. In particular for torsion-free subgroups $\varGamma\subset\SL_2(\bbz )$ we obtain a principally polarised variation of \emph{integral} Hodge structure on $X_{\varGamma }$; it is isomorphic to the variation $\mathrm{R}^1p_{\ast }\bbz$ for the universal or `modular' family of elliptic curves $p\colon E_{\varGamma }\to X_{\varGamma }$, and the family can be reconstructed from the variation. In this sense the Fuchsian variation $\bbw_{\varGamma }$ serves as a replacement for this family of elliptic curves also for the cases when $\varGamma$ is not contained in $\SL_2(\bbz )$.
\end{Remark}

\begin{Definition}
Let $X$ be a complex manifold. A \emph{real variation of elliptic curve type} on $X$ is a polarised variation of real Hodge structure $\bbv$ on $X$ with Hodge numbers $v^{1,0}=v^{0,1}=1$ and all other $v^{p,q}=0$.
\end{Definition}
For such a variation the local system $\bbv_{\bbr }$ need not be trivial. However, its pullback to the universal cover $\tilde{X}$ is. After choosing a trivialisation of this pullback we obtain a period map $\tilde{X}\to\bbh$. This trivialisation allows us to interprete the monodromy of the local system on $X$ as a representation $\varrho\colon\pi_1(X)\to\SL_2(\bbr )$; the period map is equivariant for~$\varrho$. If the image of $\varrho$ is a \emph{discrete} subgroup $\varGamma$ of $\SL_2(\bbr )$, then the period map descends to a holomorphic map $f\colon X\to \varGamma\backslash\bbh$, and $\bbv\cong f^{\ast }\bbw_{\varGamma }$.

\subsubsection*{Deligne's canonical extension} Let $\widebar{X}$ be a smooth compact complex manifold and let $Y\subset\widebar{X}$ be a normal crossings divisor (these conditions are met if $\widebar{X}$ is a compact Riemann surface and $Y$ is a finite subset). Set $X=\widebar{X}\smallsetminus Y$, and let $\bbv$ be a polarised variation of real Hodge structure on~$X$. Assume furthermore that the monodromy around $Y$ operates by unipotent endomorphisms on the fibres of~$\bbv_{\bbr }$. This is true, for instance, if $X=\varGamma\backslash\bbh$ and $\bbv =\bbw_{\varGamma }$ for a torsion-free lattice $\varGamma\subset\SL_2(\bbr )$ that does not contain parabolic elements with eigenvalue $-1$.  Let $\mathcal{V}=\bbv_{\bbr }\otimes_{\bbc }\mathcal{O}_X$ be the vector bundle underlying the variation, and let $\nabla\colon\mathcal{V}\to\mathcal{V}\otimes\Omega_X^1$ be the natural connection.

Then there exist an extension of $\mathcal{V}$ to a holomorphic vector bundle $\mathcal{V}_{\mathrm{ext}}$ on $\widebar{X}$ and an extension of $\nabla$ to a logarithmic connection, i.e.\ a sheaf homomorphism $\nabla_{\!\mathrm{ext}}\colon\mathcal{V}_{\mathrm{ext}}\to\mathcal{V}_{\mathrm{ext}}\otimes\Omega_{\widebar{X}}^1(\log Y)$ satisfying the Leibniz rule, such that the residue of $\nabla_{\!\mathrm{ext}}$ at each component $Y^0$ of $Y$ is a nilpotent endomorphism of $\mathcal{V}_{\mathrm{ext}}|_{Y^0}$. The pair $(\mathcal{V}_{\mathrm{ext}},\nabla_{\!\mathrm{ext}})$ is determined uniquely up to isomorphism by these properties. Its existence and uniqueness are proved in \cite[II.\S 5]{MR0417174}.

The subbundles $\mathrm{F}^p\mathcal{V}\subseteq\mathcal{V}$ extend uniquely to holomorphic subbundles $\mathrm{F}^p\mathcal{V}_{\mathrm{ext}}\subseteq\mathcal{V}_{\mathrm{ext}}$. This is a direct consequence of the Nilpotent Orbit Theorem \cite[(4.9)]{MR0382272}. 

\begin{Remark}\label{AlgebraicStructuresOnHodgeSubbundles}
Assume that $\widebar{X}$ is a smooth projective variety, and that $\bbv$ is isomorphic to the variation $\mathrm{R}^nf_{\ast }\bbr$ for some smooth proper morphism $f\colon Y\to X$. Then the vector bundle $\mathcal{V}$ has an algebraic structure: it can be identified with the relative de Rham cohomology $\mathcal{H}^1_{\mathrm{dR}}(Y/X)=\mathrm{R}^1f_{\ast }\Omega_{Y/X}^{\bullet }$, and the subbundles $\mathrm{F}^p\mathcal{V}$ can be constructed by truncating the relative de Rham complex $\Omega_{Y/X}^{\bullet }$; hence they, too, are algebraic subbundles of $\mathcal{V}\!$. But there is also a different way to find algebraic structures on these bundles: the canonical extensions $\mathcal{V}_{\mathrm{ext}}$ and $\mathrm{F}^p\mathcal{V}_{\mathrm{ext}}$ are holomorphic vector bundles on $\widebar{X}$, hence by Serre's `GAGA' theorems \cite[Th\'eor\`emes 2 \'et~3]{MR0082175} they possess unique algebraic structures which we can restrict to~$X$. By \cite[(4.13)]{MR0382272} these two constructions produce the same algebraic structures on $\mathcal{V}$ and its Hodge subbundles.
\end{Remark}

\subsection{Higgs bundles}

It is often convenient to work with systems of Hodge bundles (see \cite[p.~898]{MR944577}), a souped-up version of Higgs bundles, instead of variations of Hodge structure, since they fit more naturally into the realm of algebraic varieties. By deep results of C.\ Simpson \cite{MR944577, MR1040197, MR1179076}, variations of Hodge structure and systems of Hodge bundles contain the same information in many important cases, including our setup. For an introduction to this circle of ideas see~\cite{MR1157844}. We need only an elementary special case, for which the whole machinery is not necessary.

\begin{Definition}
\begin{enumerate}
\item Let $X$ be a complex manifold or an algebraic variety (over an arbitrary base field). A \emph{Higgs bundle of elliptic curve type} on $X$ consists of a (holomorphic or algebraic) line bundle $\mathcal{L}$ on $X$ and an $\mathcal{O}_X$-linear map $\vartheta\colon\mathcal{L}\to\mathcal{L}^{-1}\otimes_{\mathcal{O}_X}\Omega_X^1$, called the \emph{Higgs field}.
\item If $\widebar{X}$ is a smooth compact complex manifold or a smooth complete algebraic variety and $Y\subset \widebar{X}$ is a normal crossings divisor, a \emph{logarithmic Higgs bundle of elliptic curve type} on $X=\widebar{X}\smallsetminus Y$ is a line bundle $\mathcal{L}$ on $\widebar{X}$ together with an $\mathcal{O}_{\widebar{X}}$-linear map $\vartheta\colon\mathcal{L}\to\mathcal{L}^{-1}\otimes\Omega_{\widebar{X}}^1(\log Y)$.
\end{enumerate}
\end{Definition}
Let $X$ be a complex manifold and let $\bbv$ be a real variation of elliptic curve type on~$X$. There is a simple way to construct a Higgs bundle from~$\bbv$: Set $\mathcal{L}=\mathrm{F}^1\mathcal{V}$ and (for the moment) $\mathcal{M}=\mathcal{V}/\mathcal{L}$. The fibre of $\mathcal{M}$ over $x\in X$ is $\bbv_{x,\bbc }/\bbv_x^{1,0}$ which is canonically isomorphic to $\bbv_x^{0,1}$; note, however, that the $\bbv_x^{0,1}$ for varying $x$ form not a holomorphic but an antiholomorphic subbundle of $\bbv_{\bbc }$, which is isomorphic as a $\mathcal{C}^{\infty }$-vector bundle to $\mathcal{M}\otimes_{\mathcal{O}_Y}\mathcal{C}_Y^{\infty }$.\label{HiggsBundlesParagraph}
\begin{Lemma}\label{HiggsBundleSelfDualForVHS}
With this notation, $\mathcal{M}$ is canonically isomorphic to the dual of~$\mathcal{L}$.
\end{Lemma}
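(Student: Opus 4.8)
This follows readily from the fact that a real variation of elliptic curve type carries, by definition, a polarisation $\psi\colon\bigwedge^2\bbv_{\bbr}\to\underline{\bbr (-1)}_X$. The plan is to use $\psi$ to trivialise the determinant line bundle of $\mathcal{V}$; the point to keep in mind is that, although $\det\mathcal{V}=\bigwedge^2\mathcal{V}$ is not formally trivial, the polarisation trivialises it canonically, because $\bigwedge^2\bbv$ is a rank-one variation of Hodge structure of type $(1,1)$.

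First I would note that the short exact sequence of holomorphic vector bundles $0\to\mathcal{L}\to\mathcal{V}\to\mathcal{M}\to 0$ yields a canonical isomorphism $\det\mathcal{V}\cong\mathcal{L}\otimes_{\mathcal{O}_X}\mathcal{M}$; hence it suffices to produce a canonical isomorphism $\det\mathcal{V}\cong\mathcal{O}_X$, for then $\mathcal{M}\cong\mathcal{L}^{-1}\otimes\det\mathcal{V}\cong\mathcal{L}^{-1}$. To obtain the latter, tensor the polarisation with $\mathcal{O}_X$ and use the canonical generator $\tfrac{1}{2\pi\mathrm{i}}$ of the one-dimensional space $\bbr (-1)$ to identify $\underline{\bbr (-1)}_X\otimes_{\bbr}\mathcal{O}_X$ with $\mathcal{O}_X$; this turns $\psi$ into a holomorphic homomorphism $\bigwedge^2\mathcal{V}\to\mathcal{O}_X$, i.e. a holomorphic section of $(\det\mathcal{V})^{-1}$. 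On each fibre this section is the alternating form underlying the polarisation, which is non-zero because the associated symmetric form $\Psi$ is positive definite; and a nowhere-vanishing section of a line bundle is the same as a trivialisation. Thus $\det\mathcal{V}\cong\mathcal{O}_X$ canonically, and the lemma follows.

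Equivalently, one may argue directly: extended $\mathcal{O}_X$-linearly, $\psi$ is a fibrewise non-degenerate alternating pairing on the rank-two bundle $\mathcal{V}$ with values in $\mathcal{O}_X$; since $\mathcal{L}$ has rank one it is automatically isotropic, so $v\mapsto\psi(v,-)$ carries $\mathcal{L}$ into $\ker(\mathcal{V}^{\vee}\to\mathcal{L}^{\vee})=\mathcal{M}^{\vee}$, and the resulting map $\mathcal{L}\to\mathcal{M}^{\vee}$ is injective on each fibre by non-degeneracy of $\psi$, hence an isomorphism of line bundles. Either way there is no serious obstacle; the only substantive input is the existence of the polarisation, which is precisely what makes $\det\mathcal{V}$ — and therefore the claimed duality between $\mathcal{L}$ and $\mathcal{M}$ — canonical.
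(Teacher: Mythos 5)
Your proof is correct and is essentially the paper's argument: the short exact sequence identifies $\mathcal{L}\otimes\mathcal{M}$ with $\bigwedge^2\mathcal{V}$, and the polarisation identifies $\bigwedge^2\bbv$ with the constant variation $\underline{\bbr(-1)}$, trivialising the determinant bundle. Your second, direct pairing argument is only a repackaging of the same input, so there is nothing further to compare.
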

Hence we may identify $\mathcal{M}$ with~$\mathcal{L}^{-1}$.
\begin{proof}
The short exact sequence
$$0\to\mathcal{L}\to\mathcal{V}\to\mathcal{M}\to 0$$
gives rise to an isomorphism $\mathcal{L}\otimes\mathcal{M}\cong\bigwedge^2\mathcal{V}\!$. But $\bigwedge^2\mathcal{V}$ is the vector bundle underlying the variation of Hodge structure $\bigwedge^2\bbv$, which is canonically isomorphic to the constant variation $\underline{\bbr (-1)}$ via the polarisation.
\end{proof}
Recall that the connection $\nabla\colon\mathcal{V}\to\mathcal{V}\otimes\Omega_X^1$ is $\bbc$-linear and satisfies the Leibniz rule:
\begin{equation}\label{LeibnizRuleConnection}
\nabla (fv)=f\nabla (v)+v\otimes\mathrm{d}f
\end{equation}
for sections $f\in\mathcal{O}_X(U)$, $v\in\mathcal{V}(U)$. We then define a Higgs field $\vartheta\colon\mathcal{L}\to\mathcal{L}^{-1}\otimes_{\mathcal{O}_X}\Omega_X^1$ as the composition of inclusion, connection and projection:
\begin{equation*}
\vartheta\colon\mathcal{L}\hookrightarrow\mathcal{V}\overset{\nabla }{\to }\mathcal{V}\otimes\Omega_{X}^1\to\mathcal{M}\otimes\Omega_{X}^1\cong\mathcal{L}^{-1}\otimes\Omega_X^1.
\end{equation*}
This is indeed an $\mathcal{O}_X$-linear homomorphism of sheaves: if $v$ is a section of $\mathcal{L}$, then the second summand in (\ref{LeibnizRuleConnection}) vanishes modulo $\mathcal{L}\otimes\Omega_X^1$.

Similarly, if $\widebar{X}$ is a smooth compact complex manifold, $Y\subset \widebar{X}$ is a normal crossings divisor, $X=\widebar{X}\smallsetminus Y$ and $\bbv$ is a real variation of elliptic curve type on $X$ with unipotent monodromy around $Y$, then the Deligne extension of the corresponding vector bundles gives rise to a logarithmic Higgs bundle $\vartheta\colon\mathcal{L}\to\mathcal{L}^{-1}\otimes\Omega_{\widebar{X}}^1(\log Y)$. If $\widebar{X}$ is a complex projective variety then $\mathcal{L}$ and $\vartheta$ are again algebraic by `GAGA'.

\subsection{Theta characteristics}\label{Subs:Theta}

Theta characteristics are structures on algebraic curves or Riemann surfaces that can be expressed in many different ways. They also play a r\^{o}le in our theory, in the following two guises:
\begin{enumerate}
\item Let $C$ be a smooth curve over a field $k$, let $\Cbar$ be its smooth projective completion and let $D=\Cbar\smallsetminus C$ be the boundary. A \emph{(logarithmic) theta characteristic} on $C$ is an algebraic line bundle $\mathcal{L}$ on $\Cbar$ such that $\mathcal{L}^{\otimes 2}\simeq\omega_{\Cbar }(\log D)$. Let $\Theta (C)$ be the set of isomorphism classes of logarithmic theta characteristics on~$C$. 

Assume from now on that $k$ is algebraically closed of characteristic different from two. From the structure of the Picard group $\Pic \Cbar$ one easily derives that $\Theta (C)$ is nonempty if and only if the cardinality of $D$ is even, and in this case it has $2^{2g}$ elements where $g$ is the genus of~$\Cbar$. More precisely, consider the long exact cohomology sequence for the short exact sequence
\begin{equation}\label{SesForThetaCharacteristicsEtaleCohomology}
1\to\mu_2\to\mathcal{O}_{\Cbar }^{\times }\overset{(\cdot )^2}{\longrightarrow }\mathcal{O}_{\Cbar }^{\times }\to 1
\end{equation}
of \'etale sheaves on $\Cbar$; a segment of it can be rewritten as
\begin{equation*}
\cdots k^{\times }\overset{(\cdot )^2}{\to }k^{\times }\overset{0}{\to }\Hup^1_{\et}(\Cbar,\mu_2)\to\Pic \Cbar\overset{2\cdot }{\to }\Pic \Cbar\to\Hup^2_{\et }(\Cbar ,\mu_2)\to 0
\end{equation*}
(cf.\ the discussion in \cite[III.3]{MR0463174}). So, $\Hup^1_{\et }(\Cbar ,\mu_2)$, which is a $(\bbz /2\bbz )$-vector space of rank $2g$, is identified with the $2$-torsion subgroup of $\Pic \Cbar$, and hence $\Theta (C)$ is, when nonempty, a torsor under $\Hup_{\et }^1(\Cbar ,\mu_2)$.

The same discussion can be repeated for Riemann surfaces of finite type, with holomorphic line bundles instead of algebraic line bundles and sheaves for the analytic topology instead of the \'{e}tale topology.
\item Let $\varDelta\subset\PGL_2(\bbr )^+\cong\SL_2(\bbr )/\{\pm 1\}$ be a torsion-free lattice. Then a \emph{theta characteristic for $\varDelta$} is a lift of $\varDelta$ to $\SL_2(\bbr )$, i.e.\ a group homomorphism $\varDelta\to\SL_2(\bbr )$ such that the composition $\varDelta\to\SL_2(\bbr )\to\PGL_2(\bbr )^+$ is the given embedding. A theta characteristic of $\varDelta$ is called \emph{unipotent} if all parabolic elements of $\varDelta$ are lifted to unipotent elements of $\SL_2(\bbr )$. Denote the set of theta characteristics of $\varDelta$ by $\Theta (\varDelta )$, and the subset of unipotent ones by $\Theta^{\mathrm{uni}}(\varDelta )\subseteq\Theta (\varDelta )$.

Again it is a classical result, rediscovered several times, that there always exists a theta characteristic for~$\varDelta$: see \cite{MR825087, MR700005, MR780044, MR1124819}. A unipotent theta characteristic exists if and only if the number of cusps for $\varDelta$ is even, cf.\ the discussion preceding \cite[Theorem~4]{MR700005}. If $\varrho\colon\varDelta\to\SL_2(\bbr )$ is a theta characteristic, then every other theta characteristic of $\varDelta$ can be obtained as $\varrho\cdot\varepsilon$, where $\varepsilon$ runs through all group homomorphisms $\varDelta\to\mu_2=\{\pm 1\}$. Hence $\Theta (\varDelta )$ is a torsor under the group $\Hom (\varDelta, \mu_2)=\Hup^1(\varDelta ,\mu_2)$. Similarly $\Theta^{\mathrm{uni}}(\varDelta )$, if nonempty, is a torsor under the group $\Hup^1_{\mathrm{cusp}}(\varDelta ,\mu_2)$ of all group homomorphisms $\varDelta\to\mu_2$ sending all parabolic elements to~$1$.
\end{enumerate}
It is no coincidence that these two sets have such similar cohomological descriptions: if $C$ is a smooth curve over $\bbc$ and $\varDelta =\pi_1(C^{\an })$ is viewed as a lattice in $\PGL_2(\bbr )^+$ via the uniformisation theorem, then there is a natural bijection $\Theta^{\mathrm{uni}} (\varDelta )\to\Theta (C)$. It is equivariant for the isomorphism $\Hup^1_{\mathrm{cusp}}(\varDelta ,\mu_2)\cong\Hup^1_{\et }(\Cbar ,\mu_2)$ obtained from the identification of $\pi_1(\Cbar^{\an })$ with the quotient of $\varDelta$ by the normal subgroup generated by its parabolic elements.

To construct it, let $\varrho\colon\varDelta\to\SL_2(\bbr )$ be an element of $\Theta^{\mathrm{uni}} (\varDelta )$, and let $\varGamma =\varrho (\varDelta )$. Then $C^{\an }\cong\varGamma\backslash\bbh$, and so we can view the Fuchsian variation $\bbw_{\varGamma }$ as a variation on~$C^{\an }$. By construction it has unipotent monodromy around the cusps. Therefore we obtain a Higgs field $\vartheta\colon\mathcal{L}\to\mathcal{L}^{-1}\otimes\Omega_{\widebar{C}}^1(\log D)$.
\begin{Proposition}
The Higgs field $\vartheta\colon\mathcal{L}\to\mathcal{L}^{-1}\otimes\omega_{\Cbar }(\log D)$ is an isomorphism, hence $\mathcal{L}$ is a logarithmic theta characteristic on~$C$.
\end{Proposition}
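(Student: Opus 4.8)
The plan is to prove that the $\mathcal{O}_{\Cbar}$-linear map $\vartheta\colon\mathcal{L}\to\mathcal{L}^{-1}\otimes\omega_{\Cbar}(\log D)$ is an isomorphism by showing that it is nowhere vanishing; the conclusion $\mathcal{L}^{\otimes 2}\cong\omega_{\Cbar}(\log D)$ then follows at once. First observe that both sides are line bundles on the smooth projective curve $\Cbar$. For the target this uses that $\mathcal{M}=\mathcal{V}_{\mathrm{ext}}/\mathcal{L}$ is canonically $\mathcal{L}^{-1}$ on all of $\Cbar$: indeed $\mathcal{L}\otimes\mathcal{M}\cong\bigwedge^2\mathcal{V}_{\mathrm{ext}}$, which is the canonical extension of the bundle underlying $\bigwedge^2\bbv$; but by the polarisation $\bigwedge^2\bbv$ is the \emph{constant} variation $\underline{\bbr(-1)}$, so it has trivial monodromy, whence $\bigwedge^2\mathcal{V}_{\mathrm{ext}}\cong\mathcal{O}_{\Cbar}$, extending Lemma~\ref{HiggsBundleSelfDualForVHS} across $D$. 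Since a morphism of line bundles on an integral curve is an isomorphism precisely when it is nonzero at every closed point, i.e.\ sends a local generator to a local generator, I would check this over $C$ and at each cusp of $D$ separately.

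Over $C^{\mathrm{an}}$ I would compute $\vartheta$ after pulling back to the universal cover $\bbh$, on which $\bbv$ becomes the universal variation $\bbw$: its underlying bundle is $\mathcal{O}_{\bbh}^2$ with connection $\nabla=\mathrm{d}$ and $\mathrm{F}^1$ the subbundle generated by ${z\choose 1}$. Then $\nabla{z\choose 1}={1\choose 0}\,\mathrm{d}z$, whose image modulo $\mathcal{O}_{\bbh}\cdot{z\choose 1}$ is nonzero since ${1\choose 0}$ and ${z\choose 1}$ are linearly independent for every $z$; thus the pulled-back Higgs field is nowhere vanishing. As this computation is $\SL_2(\bbr)$-equivariant it descends, and $\vartheta$ is nowhere vanishing on $C^{\mathrm{an}}$.

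The remaining, and most delicate, point is the behaviour at a cusp $P\in D$. I would pick a punctured-disc coordinate $q$ near $P$ so that $\{\operatorname{Im}z>M\}\subset\bbh$ maps onto $\{0<\lvert q\rvert<1\}$ via $q=e^{2\pi\mathrm{i}z}$ and the stabilising parabolic of $\varGamma$ becomes $z\mapsto z+1$. Writing $\epsilon_1={1\choose 0}$, $\epsilon_2={0\choose 1}$ for the flat (multivalued) frame, with the correct orientation convention the local monodromy $T$ is unipotent with $N=\log T$ of rank one, $\epsilon_1$ is monodromy-invariant, and one checks that $\epsilon_1$ and $\exp\!\bigl(-\tfrac{\log q}{2\pi\mathrm{i}}N\bigr)\epsilon_2={z\choose 1}$ descend to holomorphic sections near $P$. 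Since $\epsilon_1$ is flat and $\nabla{z\choose 1}=\epsilon_1\otimes\tfrac{\mathrm{d}q}{2\pi\mathrm{i}q}$ has a logarithmic pole with nilpotent residue, the uniqueness of Deligne's canonical extension \cite[II.\S 5]{MR0417174} shows that $\epsilon_1$ and ${z\choose 1}$ frame $\mathcal{V}_{\mathrm{ext}}$ near $P$. As $\mathrm{F}^1\mathcal{V}_z=\bbc\cdot{z\choose 1}$ for $q\neq0$, the subbundle $\mathcal{L}=\mathrm{F}^1\mathcal{V}_{\mathrm{ext}}$ (which exists by the Nilpotent Orbit Theorem \cite[(4.9)]{MR0382272}) is generated near $P$ by ${z\choose 1}$ and $\mathcal{M}$ by the class $\overline{\epsilon_1}$. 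Hence $\vartheta\bigl({z\choose 1}\bigr)=\overline{\epsilon_1}\otimes\tfrac{\mathrm{d}q}{2\pi\mathrm{i}q}$, which generates $\mathcal{M}\otimes\omega_{\Cbar}(\log D)$ near $P$ because $\tfrac{\mathrm{d}q}{q}$ generates $\omega_{\Cbar}(\log D)$ there. So $\vartheta$ is an isomorphism at every cusp as well, hence an isomorphism of line bundles on $\Cbar$, and with $\mathcal{M}\cong\mathcal{L}^{-1}$ this yields $\mathcal{L}^{\otimes 2}\cong\omega_{\Cbar}(\log D)$, i.e.\ $\mathcal{L}$ is a logarithmic theta characteristic. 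The main obstacle is precisely this cusp bookkeeping — fixing the monodromy and orientation conventions, identifying the canonical-extension frame together with its residue, and matching them against the local generator $\tfrac{\mathrm{d}q}{q}$ of $\omega_{\Cbar}(\log D)$; the rest is formal.
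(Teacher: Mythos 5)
Your proof is correct and follows essentially the same route as the paper: isomorphy over $C$ is checked via the universal variation on $\bbh$ (where $\nabla{z\choose 1}$ is nonzero modulo $\mathrm{F}^1$), and isomorphy at the cusps via an explicit frame for the Deligne extension in the local model $q=e^{2\pi\mathrm{i}z}$, where the Higgs field becomes $f\mapsto f\,\frac{\mathrm{d}q}{2\pi\mathrm{i}\,q}$, matching the paper's computation with the frame $(\epsilon_1,{z\choose 1})$ and connection matrix having a nilpotent logarithmic residue. Your extra verification that $\bigwedge^2\mathcal{V}_{\mathrm{ext}}\cong\mathcal{O}_{\Cbar}$, so that $\mathcal{M}\cong\mathcal{L}^{-1}$ across $D$, is a harmless elaboration of what the paper builds into its definition of the logarithmic Higgs bundle.
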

\begin{proof}
The Higgs field corresponding to the universal variation $\bbw$ on $\bbh$ is easily seen to be an isomorphism, hence so is the restriction of $\vartheta$ to~$C$. It remains to check isomorphy at the cusps. As a local model consider the quotient variation $\bbw_{\varGamma_{\infty }}$ on $\varGamma_{\infty }\backslash\bbh\cong\mathbb{D}\smallsetminus\{ 0\}$ for $\varGamma_{\infty }$ the cyclic group generated by $\begin{pmatrix}1& 1\\ 0& 1\end{pmatrix}$. Its underlying vector bundle is isomorphic to $\mathcal{V}=\mathcal{O}^2$ on $\mathbb{D}\smallsetminus\{ 0\}$ with Hodge filtration $\mathrm{F}^1\mathcal{V}=0\oplus\mathcal{O}$ and connection
\begin{equation*}
\nabla =\mathrm{d}-\begin{pmatrix}0&\frac{\mathrm{d}t}{t}\\ 0&0\end{pmatrix}.
\end{equation*}
From this description one directly reads off the Deligne extension of $\bbw_{\varGamma_{\infty }}$ to $\mathbb{D}$ and its corresponding Higgs field, which is given by
$$\mathcal{O}_{\mathbb{D}}\to\mathcal{O}_{\mathbb{D}}\otimes\omega_{\mathbb{D}}(0),\quad f(t)\mapsto \frac{f(t)\,\mathrm{d}t}{t},$$
hence an isomorphism.
\end{proof}
The map $\varGamma\mapsto [\mathcal{L}]$ is the desired bijection $\Theta^{\mathrm{uni}} (\varDelta )\to\Theta (C)$. Its equivariance for the natural isomorphism $\Hup^1_{\mathrm{cusp}}(\varDelta ,\mu_2)\to\Hup^1_{\et }(\Cbar ,\mu_2)$ can be checked by an elementary calculation involving automorphy factors, similar to the proof of \cite[Theorem~1]{MR700005}.
\begin{Proposition}\label{CharacterisationMaximalHiggs}
Let $\Cbar$ be a smooth projective curve over~$\bbc$, let $D\subset \Cbar (\bbc )$ be a finite subset and let $C=\Cbar\smallsetminus D$. Assume that $C$ is of hyperbolic type, i.e.\ the universal covering of $C$ is~$\bbh$. Let $\bbv$ be a real variation of elliptic curve type on $C^{\an }$ which has unipotent monodromy around all cusps and let $\vartheta\colon\mathcal{L}\to\mathcal{L}^{-1}\otimes\omega_{\Cbar }(\log D)$ be the associated Higgs field on~$\Cbar$.

 Then the following are equivalent:
\begin{enumerate}
\item The monodromy representation $\pi_1(C^{\an })\to\SL_2(\bbr )$ of $\bbv_{\bbr }$ is conjugate to a unipotent theta characteristic of~$\pi_1(C^{\an })$.
\item The period map $p\colon \widetilde{C^{\an }}\to\bbh$ is a biholomorphism.
\item $\vartheta$ is an isomorphism of line bundles (`the Higgs field is maximal').
\item $\mathcal{L}$ is a logarithmic theta characteristic on~$C$.
\item The degree of the line bundle $\mathcal{L}$ is $-\frac{1}{2}\chi (C)$, where $\chi$ denotes the topological Euler characteristic.
\end{enumerate}
If these conditions are satisfied, then the unipotent theta characteristic of $\pi_1(C^{\an })$ in (i) and the theta characteristic of $C$ in (iv) correspond to each other under the bijection $\Theta^{\mathrm{uni}}(\pi_1(C^{\an }))\leftrightarrow\Theta (C)$.
\end{Proposition}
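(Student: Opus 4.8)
The plan is to establish the implications $(\mathrm{ii})\Rightarrow(\mathrm{i})\Rightarrow(\mathrm{ii})$, then $(\mathrm{ii})\Rightarrow(\mathrm{iii})\Rightarrow(\mathrm{iv})\Rightarrow(\mathrm{v})\Rightarrow(\mathrm{iii})$, and finally $(\mathrm{iii})\Rightarrow(\mathrm{ii})$; together these force all five conditions to be equivalent, and the concluding assertion about the two theta characteristics will drop out of the proof of $(\mathrm{ii})\Rightarrow(\mathrm{iii})$. Throughout write $\varDelta=\pi_1(C^{\an})\subset\PGL_2(\bbr)^+$ for the uniformising group, so $\widetilde{C^{\an}}=\bbh$ and $C^{\an}=\varDelta\backslash\bbh$, let $\varrho\colon\varDelta\to\SL_2(\bbr)$ be the monodromy of $\bbv_{\bbr}$ and $p\colon\bbh\to\bbh$ the period map, equivariant for $\varrho$. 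For $(\mathrm{ii})\Rightarrow(\mathrm{i})$ I would change the chosen trivialisation so that the biholomorphism $p$ becomes the identity; equivariance then forces the composite $\varDelta\to\SL_2(\bbr)\to\PGL_2(\bbr)^+$ to be the inclusion, so $\varrho$ is a theta characteristic of $\varDelta$, unipotent because $\bbv$ has unipotent monodromy around the cusps. For $(\mathrm{i})\Rightarrow(\mathrm{ii})$, assuming $\varrho$ is, up to conjugation, a unipotent theta characteristic, $p$ commutes with the $\varDelta$-action and hence descends to a holomorphic self-map $\bar p$ of $C^{\an}$ inducing the identity on $\pi_1$; since $\bar p$ respects the peripheral structure it extends to a self-map of $\Cbar$ of degree one, which by Riemann--Hurwitz is unramified, hence an automorphism fixing every point of $D$, and an automorphism of $\Cbar$ homotopic to the identity is the identity, so $p$ is a M\"obius transformation. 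The mildly delicate points here---the extension of $\bar p$ across the cusps and the rigidity of automorphisms homotopic to the identity---are classical.

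For the core cycle, $(\mathrm{ii})\Rightarrow(\mathrm{iii})$ holds because once $p$ is normalised to the identity the variation $\bbv$ is literally the Fuchsian variation $\bbw_{\varrho(\varDelta)}$ on $C^{\an}$, whose Higgs field is an isomorphism by the Proposition just above; and by the definition of the bijection $\Theta^{\mathrm{uni}}(\varDelta)\leftrightarrow\Theta(C)$ the class of $\mathcal{L}=\mathrm{F}^1\mathcal{V}$ is precisely the image of the theta characteristic $\varrho$, which is the concluding assertion of the Proposition. The implication $(\mathrm{iii})\Rightarrow(\mathrm{iv})$ is the definition of a logarithmic theta characteristic, and $(\mathrm{iv})\Rightarrow(\mathrm{v})$ is a degree count, $2\deg\mathcal{L}=\deg\omega_{\Cbar}(\log D)=(2g-2)+\#D=-\chi(C)$.

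For $(\mathrm{v})\Rightarrow(\mathrm{iii})$ I would note that the target of the Higgs field has degree $\deg(\mathcal{L}^{-1}\otimes\omega_{\Cbar}(\log D))=-\deg\mathcal{L}-\chi(C)=-\tfrac12\chi(C)=\deg\mathcal{L}$, so $\vartheta$ is an isomorphism as soon as it is nonzero, a nonzero morphism between line bundles of equal degree on a curve being invertible. That $\vartheta\ne0$ is the point: if $\vartheta=0$ then $\nabla$ preserves the holomorphic line subbundle $\mathrm{F}^1\mathcal{V}\subset\mathcal{V}$ on $C^{\an}$, so the monodromy preserves both the line $\mathrm{F}^1\mathcal{V}(x)$ and its complex conjugate, which span $\mathcal{V}(x)$; a matrix in $\SL_2(\bbr)$ with this property lies in a compact torus, so the monodromy around each cusp, being simultaneously unipotent and of this kind, is trivial. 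Then $\bbv_{\bbr}$ extends to a local system on $\Cbar^{\an}$, its canonical extension $\mathcal{V}_{\mathrm{ext}}$ is the associated flat bundle (hence of degree zero), and $\mathcal{L}=\mathrm{F}^1\mathcal{V}_{\mathrm{ext}}$ is a flat sub-line-bundle of it, forcing $\deg\mathcal{L}=0$---contradicting $\deg\mathcal{L}=-\tfrac12\chi(C)>0$, which is positive since $C$ is of hyperbolic type.

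The hard part is $(\mathrm{iii})\Rightarrow(\mathrm{ii})$. The key observation is that, under the canonical identification of the tangent space to the period domain $\bbh$ at $p(x)$ with $\Hom(\mathrm{F}^1\mathcal{V}(x),\mathcal{V}(x)/\mathrm{F}^1\mathcal{V}(x))$, the differential $\de p_x$ \emph{is} $\vartheta_x$; hence if $\vartheta$ is an isomorphism then $p\colon\bbh\to\bbh$ is a local biholomorphism, and $p^{\ast}\rho$, with $\rho$ the hyperbolic metric on the target, is a smooth conformal metric of constant curvature $-1$ on $\bbh$. By the Schwarz--Pick lemma its conformal factor, viewed as a function on $C^{\an}$, is bounded above by $1$; the step that needs work is to see it is also bounded away from $0$, which in the projective case is immediate from compactness, and in the affine case I would obtain near each cusp from the fact that $\vartheta$ is an isomorphism along $D$ as well, so that in a local coordinate $t$ the pulled-back metric behaves like $|\de t|^2/(|t|^2(\log|t|)^2)$, i.e.\ comparably to the hyperbolic metric of $C^{\an}$. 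Then $p^{\ast}\rho$ is complete, $(\bbh,p^{\ast}\rho)$ is isometric to the hyperbolic plane, and a local isometry out of a complete Riemannian manifold is a covering map; as the target $\bbh$ is simply connected, $p$ is a global isometry, hence a biholomorphism, which is $(\mathrm{ii})$. This analytic comparison near the cusps is the step I expect to be the main obstacle.
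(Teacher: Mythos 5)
Your proposal is correct, and it takes a genuinely more self-contained route than the paper at the two places where the paper outsources the work. The paper's proof dismisses (i)$\Leftrightarrow$(ii) as an elementary consequence of equivariance of the period map, quotes \cite[Lemma~2.1]{MR2106125} (with a pointer to \cite{MR944577}) for (ii)$\Leftrightarrow$(iii), calls (iii)$\Rightarrow$(iv)$\Rightarrow$(v) obvious, and argues only (v)$\Rightarrow$(iii) in the text --- exactly as you do: $\vartheta$ is a pole-free section of a degree-zero line bundle, and $\vartheta\equiv 0$ would make $\mathcal{L}$ the bundle of a rank-one local subsystem, hence of degree zero, contradicting $\deg\mathcal{L}=-\tfrac12\chi(C)>0$; your detour through triviality of the cusp monodromies merely makes the paper's terse ``degree zero'' explicit. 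What you do differently: you get (ii)$\Rightarrow$(iii) by normalising $p=\mathrm{id}$, identifying $\bbv$ with the Fuchsian variation and invoking the Proposition proved just before, which has the bonus of delivering the concluding compatibility with the bijection $\Theta^{\mathrm{uni}}(\varDelta)\leftrightarrow\Theta(C)$ (a point the paper's proof never addresses explicitly), and you re-prove the quoted lemma for (iii)$\Rightarrow$(ii) by the analytic uniformisation argument: $\de p=\vartheta$ makes $p$ an immersion, the pulled-back curvature $-1$ metric is dominated by the hyperbolic metric, isomorphy of $\vartheta$ along $D$ forces nontrivial unipotent local monodromy at each cusp and hence the $\lvert\de t\rvert^2/(\lvert t\rvert\log\lvert t\rvert)^2$ asymptotics (nilpotent orbit theorem or the explicit $\SL_2$ local model --- the step you rightly flag as the real work, and precisely what the citation buys the paper), after which completeness plus ``local isometry from a complete surface is a covering'' gives the biholomorphism. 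Two small patches to your sketch of (i)$\Rightarrow$(ii): the ``degree one'' of the extension to $\Cbar$ is cleanest obtained from Riemann--Hurwitz combined with $\chi(C)<0$ (the cohomological argument via the action on $\Hup^2$ fails in genus zero), and for genus $0$ or $1$ the rigidity of an automorphism homotopic to the identity needs the fixed points in $D$; both are routine and no worse than the paper's own ``elementary''.
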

\begin{proof}
The equivalence (i) $\Leftrightarrow$ (ii) is an elementary consequence of the fact that the period map is equivariant for the monodromy representation, and (ii) $\Leftrightarrow$ (iii) is \cite[Lemma~2.1]{MR2106125}. A more abstract version of this argument can be found in the discussion preceding \cite[Prop.~9.1]{MR944577}).

The implications (iii) $\Rightarrow$ (iv) $\Rightarrow$ (v) are obvious. For the remaining part (v) $\Rightarrow$ (iii) note that if (v) is true then $\vartheta$ cannot be identically zero: assume it is. Then by construction the Hodge decomposition is flat, i.e.\ $\mathcal{L}=\mathrm{F}^1\mathcal{V}$ is the line bundle underlying a local subsystem of rank one in~$\bbv_{\bbc }$. But such a line bundle has degree zero, contradiction. Hence the Higgs field is a nontrivial holomorphic section of the degree zero line bundle $\underline{\Hom }(\mathcal{L},\mathcal{L}^{-1}\otimes\omega_{\Cbar }(\log D))$; but as the degree is the number of zeros minus the number of poles and $\vartheta$ has no poles, it cannot have any zeros either, hence it is invertible.
\end{proof}

\section{Quaternionic Shimura varieties}

\noindent We now review the construction and some important properties of the Shimura varieties that occur as targets for modular embeddings. They have been studied extensively in \cite{MR0204426} in rather classical language; however, for our approach the adelic formalism developed by Deligne \cite{MR0498581, MR546620} is more suitable. We state all definitions necessary for our setup, but for their incorporation into a general theory the reader is advised to consult \cite{MR0498581, MR546620, MR1044823, MR2192012}.

\subsection{The Shimura varieties themselves}\label{Subs:Shimura}

Let $F$ be a totally real number field and let $\mathscr{S}=\mathscr{S}_f\cup\mathscr{S}_{\infty }$ be its set of places. Let $B$ be a quaternion algebra over $F$ and let $\mathscr{R}(B)=\mathscr{R}_f(B)\cup\mathscr{R}_{\infty }(B)$ be the set of places where $B$ is ramified. Throughout we assume Eichler's condition: the set $\mathscr{P}=\mathscr{S}_{\infty }\smallsetminus\mathscr{R}_{\infty }(B)$ is nonempty.  For every $\varrho\in\mathscr{P}$ we fix an isomorphism
\begin{equation}\label{AlgebraIsomorphismOverReals}
B\otimes_{F,\varrho }\bbr\cong\mathrm{M}_2(\bbr ).
\end{equation}

Let $H$ be the linear algebraic group over $F$ with $H(F)=B^1$ (the group of invertible elements in $B$ with reduced norm one); let $G=\Res_{F/\bbq }H$. Then (\ref{AlgebraIsomorphismOverReals}) induces, for every $\varrho\in\mathscr{P}$, an isomorphism
 $H_F\otimes_{F,\varrho }\bbr\cong\SL_{2,\bbr }$, and for every $\varrho\in\mathscr{R}_{\infty }(B)$ we fix an isomorphism $H_F\otimes_{F,\varrho }\bbr\cong\operatorname{SU}_{2,\bbr }$. These glue to an isomorphism
\begin{equation}\label{ProductDecompositionGR}
G_{\bbr }\cong\prod_{\varrho\in\mathscr{S}_{\infty }}H_F\otimes_{F,\varrho }\bbr \to\prod_{\varrho\in\mathscr{P}}\SL_{2,\bbr }\times\prod_{\varrho\in\mathscr{R}_{\infty }(B)}\operatorname{SU}_{2,\bbr }.
\end{equation}
Let $G^{\mathrm{ad}}$ be the adjoint group of $G$, and similarly for~$H$, so that $\gad (\bbq )=\had (F)=B^{\times }/F^{\times }$. Then the isomorphism (\ref{ProductDecompositionGR}) induces an isomorphism
\begin{equation}\label{ProductDecompositionGRad}
G^{\mathrm{ad}}_{\bbr }\cong\prod_{\varrho\in\mathscr{S}_{\infty }}H^{\mathrm{ad}}_F\otimes_{F,\sigma }\bbr \to\prod_{\varrho\in\mathscr{P}}\PGL_{2,\bbr }\times\prod_{\varrho\in\mathscr{R}_{\infty }(B)}\operatorname{PU}_{2,\bbr }.
\end{equation}
Let $\bbs =\Res_{\bbc /\bbr }\bbg_m$ be the Deligne torus (cf.~Section~\ref{Sect:HodgeStructures}), and let $h_1\colon\bbs\to\PGL_{2,\bbr }$ be the standard homomorphism, given on $\bbr$-points by
$$\bbc^\times\to\PGL_2(\bbr ),\qquad a+b\mathrm{i}\mapsto\begin{pmatrix}a& -b\\ b& a\end{pmatrix}\bmod\bbr^{\times }.$$
Let $h\colon\bbs\to G^{\mathrm{ad}}_{\bbr }$ be the homomorphism which in the product decomposition (\ref{ProductDecompositionGRad}) is given by $h_1$ on each $\PGL_{2,\bbr }$-factor and by the trivial homomorphism on each $\operatorname{PU}_{2,\bbr }$-factor. Let $X\subset\Hom (\bbs ,\gad_{\bbr })$ be the $\gad (\bbr )^+$-conjugacy class containing~$h$.

The pair $(G,X)$ is a connected Shimura datum in the following sense:
\begin{Definition}
A \emph{connected Shimura datum} is a pair $(G,X)$ consisting of a semisimple algebraic group $G$ over $\bbq$ and a $\gad (\bbr )^+$-conjugacy class $X$ of homomorphisms $h\colon\bbs\to \gad_{\bbr }$ such that the following conditions are satisfied for one (and hence all) $h\in X$:
\begin{enumerate}
\item No characters other than $1$, $z/\widebar{z}$ and $\widebar{z}/z$ occur in the representation of $\bbs$ on $\operatorname{Lie}\gad_{\bbc }$.
\item $\operatorname{ad}h(\mathrm{i})$ is a Cartan involution on~$G^{\ad }$.
\item $h$ projects nontrivially to every $\bbq$-factor of~$G^{\ad }$.
\end{enumerate}
\end{Definition}
The definition given here is the `alternative definition' \cite[Definition~4.22]{MR2192012}. It is also used in \cite[p.~93]{MR931206} on which we heavily rely.

\subsubsection*{Congruence subgroups}

Recall that an automorphism of a finite-dimensional $\bbc$-vector space is \emph{neat} if its eigenvalues generate a torsion-free subgroup of $\bbc^{\times }$, and an arithmetic subgroup $\varGamma\subset R(\bbq )$ of a linear algebraic group $R$ over $\bbq$ is neat if the image of $\varGamma$ under one (equivalently, any) faithful rational representation of $R$ has only neat elements. Every sufficiently small congruence subgroup of $R(\bbq )$ is neat by \cite[17.4]{MR0244260}. Note that neat subgroups are torsion-free.

Let $G$ be a semisimple linear algebraic group over~$\bbq$. The set of neat congruence subgroups of $G(\bbq )$ is denoted by $\tilde{\Sigma }(G)$. The completion of $G (\bbq )$ with respect to the topology induced by the subgroups in $\tilde{\Sigma }(G)$ is canonically identified with $G(\bba^f)$.

The set of neat arithmetic subgroups of $\gad (\bbq )^+$ that contain the image of a congruence subgroup of $G(\bbq )$ is denoted by $\Sigma (G)$. Note that these are not necessarily congruence subgroups of~$\gad (\bbq )^+$. If $\tilde{\varGamma }\in\tilde{\Sigma }(G)$ then its image in $\gad (\bbq )$ is an element of $\Sigma (G)$ denoted by $\operatorname{ad}\tilde{\varGamma }$. The completion of $\gad (\bbq )^+$ with respect to the topology induced by these subgroups is denoted by $\mathcal{A}(G)$. Note that this is the group denoted by $\gad (\bbq )^+\,\hat{\,}$ in \cite{MR546620} and~\cite{MR931206}. If $G$ is simply connected (as in all cases we consider) the canonical maps $\operatorname{ad}\colon G(\bba^f)\to\mathcal{A}(G)$ and $\gad (\bbq )^+\to\mathcal{A}(G)$ define an isomorphism of topological groups
\begin{equation}\label{GAasAmalgamatedProduct}
G(\bba^f)\ast_{G(\bbq )}\gad (\bbq )^+\cong\mathcal{A}(G),
\end{equation}
see \cite[2.1.6.2]{MR546620}.

\subsubsection*{Quotients}

The space $X$ admits a natural structure as a complex manifold, in fact as an Hermitian symmetric domain. The operation of $\gad (\bbr )^+$ on $X$ by conjugation identifies $\gad (\bbr )^+$ with the group of biholomorphisms $X\to X$. Each arithmetic subgroup $\varGamma\subset\gad (\bbq )^+$ containing a group in $\Sigma (G)$ operates freely and properly discontinuously on $X$; the quotient $\varGamma\backslash X$ is a normal complex space, and by \cite[Theorem~3.10]{MR0338456} it admits a unique structure as a normal quasiprojective complex variety. We denote this variety by $\Sh^0_{\varGamma }(G,X)$. If $\varGamma\in\Sigma (G)$ this variety is smooth, and $X$ is the universal cover of $\varGamma\backslash X=\Sh^0_{\varGamma }(G,X)(\bbc )$. The system consisting of the $\Sh_{\varGamma }^0(G,X)$ for varying $\varGamma\in\Sigma (G)$ together with the obvious `forgetful' maps $\Sh_{\varGamma '}^0(G,X)\to\Sh_{\varGamma }^0(G,X)$ for $\varGamma '\subseteq\varGamma$, as well as its limit
\begin{equation*}
\Sh^0(G,X)=\varprojlim_{\varGamma\in\Sigma (G)}\Sh_{\varGamma }^0(G,X)=\varprojlim_{\tilde{\varGamma }\in\tilde{\Sigma }(G)}\Sh_{\operatorname{ad}\tilde{\varGamma }}^0(G,X)
\end{equation*}
(limits in the category of $\bbc$-schemes), is called the \emph{connected Shimura variety} associated with $(G,X)$. There is a natural left action of $\gad (\bbq )^+$ on $\Sh^0(G,X)$, given on finite levels by
\begin{equation*}
g\colon\varGamma\backslash X\to g\varGamma g^{-1}\backslash X,\qquad [x]\mapsto [gx].
\end{equation*}
By continuity it extends to an action of $\mathcal{A}(G)$ on $\Sh^0(G,X)$. If $\varGamma\in\Sigma (G)$ then the finite type variety $\Sh^0_{\varGamma }(G,X)$ can be retrieved as the quotient $\hat{\varGamma }\backslash\Sh^0(G,X)$ where $\hat{\varGamma }$ is the topological closure of $\varGamma$ in $\mathcal{A}(G)$.

\subsubsection*{Adelic description}

Let $\tilde{\varGamma }\in\tilde{\Sigma }(G)$ and let $K$ be its closure in $G(\bba^f)$. Letting $G(\bbq )$ operate from the left and $K$ from the right on $X\times G(\bba^f)$ by
$$q(x,g)k=(\operatorname{ad}(q)x,qgk)$$
we can form the double quotient $G(\bbq )\backslash X\times G(\bba^f)/K$. From the general formalism of Shimura varieties it follows that the map
\begin{equation*}
\operatorname{ad}\tilde{\varGamma }\backslash X\to G(\bbq )\backslash X\times G(\bba^f)/K,\qquad [x]\mapsto [x,1]
\end{equation*}
is biholomorphic. Passing to the limit over smaller and smaller $\tilde{\varGamma }$ we obtain a bijection
\begin{equation}\label{ShZeroAsSimpleQuotient}
\Sh^0(G,X)(\bbc )\cong G(\bbq )\backslash X\times G(\bba^f)
\end{equation}
that can even be considered as a biholomorphism for suitable topologies and complex structures on both sides, see \cite[Section~2.7]{MR546620}. The action of $\mathcal{A}(G)$ on $\Sh^0(G,X)$ translates via (\ref{GAasAmalgamatedProduct}) and (\ref{ShZeroAsSimpleQuotient}) to the following two left actions on $G(\bbq )\backslash X\times G(\bba^f)$:
\begin{enumerate}
\item $G(\bba^f)$ operates by $a\colon [x,g]\mapsto [x,ga^{-1}]$;
\item $\gad (\bbq )^+$ operates by $q\colon [x,g]\mapsto [qx,\operatorname{ad}(q)(g)]$.
\end{enumerate}
Since these two actions agree on $G(\bbq )$, they indeed glue to an action by $\mathcal{A}(G)$.

\subsubsection*{Dichotomy: projective or modular}

Let $\varGamma\in\Sigma (G)$ and let $S=\Sh^0_{\varGamma }(G,X)$. Then $S$ is projective if and only if $B$ is a division algebra; this is easily seen using the compactness criterion \cite[Theorem~11.6]{MR0147566} of Borel and Harish-Chandra. Otherwise $B$ has to be isomorphic to $\mathrm{M}_2(F)$, hence $G$ is isomorphic to $\Res_{F/\bbq }\SL_2$. The associated Shimura varieties are then called \emph{Hilbert--Blumenthal} varieties and can be interpreted as moduli spaces for abelian varieties $A$ of dimension $[F:\bbq ]$ with a ring homomorphism $F\to\End (A)\otimes\bbq$, together with certain level structures whose exact form depends on~$\varGamma$. For more about these varieties see~\cite{MR930101}.

\subsubsection*{The compact dual Hermitian symmetric space}

For each $h\colon\bbs\to \gad_{\bbr }$ in $X$ we can consider the associated Hodge filtration homomorphism $\mu_h\colon\bbg_{m,\bbc}\to \gad_{\bbc }$. The $\gad (\bbc )$-conjugacy class $\check{X}\subset\Hom (\bbg_{m,\bbc },\gad_{\bbc })$ containing $\mu_h$ depends only on $X$ and not on the chosen representative $h\in X$. The space $\check{X}$ has a natural structure as a compact complex manifold, and in fact as a complex algebraic variety. The \emph{Borel embedding}
\begin{equation*}
\beta\colon X\hookrightarrow\check{X},\qquad h\mapsto \mu_h,
\end{equation*}
realises $X$ as an open complex submanifold of~$\check{X}$.

\subsubsection*{Classical description}

We let $\gad (\bbr )^+$ act on $\bbh^{\mathscr{P}}$ by coordinate-wise M\"{o}bius transformations via the projection
\begin{equation*}
\gad (\bbr )^+\overset{(\ref{ProductDecompositionGRad})}{\cong }\prod_{\varrho\in\mathscr{P}}\PGL_2(\bbr )^+\times\prod_{\varrho\in\mathscr{R}_{\infty }(B)}\operatorname{PU}_2(\bbr )\twoheadrightarrow\prod_{\varrho\in\mathscr{P}}\PGL_2(\bbr )^+.
\end{equation*}
Then for each $h\in X$ the image $h(\bbc^{\times })\subset \gad (\bbr )^+$ has a unique common fixed point in $\bbh^{\mathscr{P}}$, and sending $h$ to that fixed point defines a biholomorphism $X\to\bbh^{\mathscr{P}}$.

Similarly, the identification (\ref{ProductDecompositionGRad}) defines a projection
\begin{equation*}
\gad (\bbc )\overset{(\ref{ProductDecompositionGRad})}{\cong }\prod_{\varrho\in\mathscr{P}}\PGL_2(\bbc )\times\prod_{\varrho\in\mathscr{R}_{\infty }(B)}\operatorname{PU}_2(\bbc )\twoheadrightarrow\prod_{\varrho\in\mathscr{P}}\PGL_2(\bbc ).
\end{equation*}
We let $\gad (\bbc )$ operate via this projection and by coordinate-wise M\"{o}bius transformations on $\bbp^1(\bbc )^{\mathscr{P}}$. For each $\mu\colon\bbg_m\to \gad_{\bbc }$ in $\check{X}$ the image $\mu (\bbc )\subset \gad (\bbc )$ has a unique fixed point in $\bbp^1(\bbc )^{\mathscr{P}}$, and sending $\mu$ to that fixed point defines a biholomorphism $\check{X}\to\bbp^1(\bbc )^{\mathscr{P}}$.

The maps constructed in this paragraph fit in a commutative diagram
\begin{equation*}
\xymatrix{
X \ar[d]^-{\simeq } \ar@{^{(}->}[r]^-{\beta } & \check{X} \ar[d]_-{\simeq } \\
\bbh^{\mathscr{P}} \ar@{^{(}->}[r] &\bbp^1(\bbc )^{\mathscr{P}}
}
\end{equation*}
with the natural inclusion on the lower horizontal line; the morphisms in this diagram are equivariant for the corresponding morphisms in the commutative diagram
\begin{equation*}
\xymatrix{
\gad (\bbr )^+\ar[d]^-{\simeq } \ar@{^{(}->}[r] & \gad (\bbc ) \ar[d]_-{\simeq } \\
(\PGL_2(\bbr )^+)^{\mathscr{P}} \ar@{^{(}->}[r] &\PGL_2(\bbc )^{\mathscr{P}}.
}
\end{equation*}

\subsection{Automorphic line bundles}

Let $\mathfrak{L}\to\bbp^1$ be the total space of the line bundle $\mathcal{O}(1)$ on $\bbp^1$. Recall that the fibre $\mathfrak{L}_x$ over a point $x\in\bbp^1(\bbc )$ represented by a one-dimensional subspace $W\subset\bbc^2$ is the dual space $W^{\ast }=\Hom (W,\bbc )$. Therefore the points of $\mathfrak{L}(\bbc )$ are pairs $(W,\lambda )$ where $W\subset\bbc^2$ is a line and $\lambda\colon W\to\bbc$ is a linear form. Then
$$\gamma \cdot (W,\lambda )\defined (\gamma (W),\lambda\circ\gamma^{-1})$$
defines an action of $\SL_2(\bbc )$ on $\mathfrak{L}(\bbc )$ which is fact an algebraic action of $\SL_2$ on $\mathfrak{L}$ defined over $\bbq$ and compatible with the vector bundle structure on~$\mathfrak{L}$.

For each embedding $\varrho\in\mathscr{P}$ let $\mathfrak{L}_{\varrho }$ be the line bundle on $\check{X}$ which is the pullback of $\mathfrak{L}$ along the composition
\begin{equation*}
\check{X}\cong\bbp^1(\bbc )^{\mathscr{P}}\overset{\operatorname{pr}_{\varrho }}{\twoheadrightarrow }\bbp^1(\bbc ).
\end{equation*}
Since this holomorphic map is equivariant for the corresponding projection on groups,
\begin{equation*}
G (\bbc )\twoheadrightarrow\SL_2(\bbc )^{\mathscr{P}}\overset{\operatorname{pr}_{\varrho }}{\twoheadrightarrow}\SL_2(\bbc ),
\end{equation*}
the line bundle $\mathfrak{L}_{\varrho }\to\check{X}$ becomes a $G_{\bbc }$-vector bundle in a natural way. That is, $G_{\bbc }$ operates on $\mathfrak{L}_{\varrho }$ as an algebraic variety in a way which is compatible with the structure map $\mathfrak{L}_{\varrho }\to\check{X}$ and which respects the vector space structures on the fibres.

This action restricts to a holomorphic action of $G (\bbr )$ on $\mathfrak{L}_{\varrho }|_X\to X$. For every subgroup $\tilde{\varGamma }\in\tilde{\Sigma}(G)$ this vector bundle descends to a holomorphic vector bundle
$$L_{\varrho ,\tilde{\varGamma }}\defined\tilde{\varGamma }\backslash (\mathfrak{L}_{\varrho }|_X)\to\operatorname{ad}\tilde{\varGamma }\backslash X=\Sh^0_{\operatorname{ad}\tilde{\varGamma }}(G,X)(\bbc ).$$
\begin{Lemma}\label{Lemma:AlgebraicStructureUnique}
Assume that $\lvert\mathscr{P}\rvert =\dim S>1$. Then there exists a unique algebraic structure $\mathcal{L}_{\varrho ,\tilde{\varGamma }}$ on the bundle $L_{\varrho ,\tilde{\varGamma }}$ compatible with the given holomorphic structure. Furthermore, any holomorphic global section of $(\mathcal{L}_{\varrho ,\tilde{\varGamma }})^{\otimes n}\otimes\Omega^p_S$ is algebraic.
\end{Lemma}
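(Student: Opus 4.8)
The plan is to reduce everything to Serre's GAGA after passing to a projective compactification, the hypothesis $\lvert\mathscr P\rvert=\dim S>1$ being used only to control the boundary behaviour; in effect this is a special case of Milne's results on automorphic vector bundles \cite{MR931206}. By the dichotomy recalled above, either $B$ is a division algebra, in which case $S=\Sh^0_{\operatorname{ad}\tilde{\varGamma}}(G,X)$ is already projective and, since $\tilde{\varGamma}$ is neat, smooth; then GAGA applies directly, as the analytification functor on coherent sheaves is an equivalence of categories preserving cohomology, and this simultaneously yields the uniqueness of $\mathcal L_{\varrho,\tilde{\varGamma}}$ (full faithfulness on line bundles) and the algebraicity of every holomorphic section of $(\mathcal L_{\varrho,\tilde{\varGamma}})^{\otimes n}\otimes\Omega^p_S$. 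So from now on I may assume $B\cong\mathrm{M}_2(F)$, whence $S$ is a Hilbert--Blumenthal variety of dimension $g=[F:\bbq]=\lvert\mathscr P\rvert>1$.

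Next I would compactify: fix a smooth projective toroidal compactification $j\colon S\hookrightarrow\widebar S$ with boundary $D=\widebar S\smallsetminus S$ a reduced normal crossings divisor, available by Rapoport's work on Hilbert--Blumenthal varieties \cite{MR930101}. Since $\tilde{\varGamma}$ is neat, $L_{\varrho,\tilde{\varGamma}}$ is the dual of the Hodge sub-bundle $\mathrm{F}^1\bbv_{\varrho}$ of a polarised variation of Hodge structure $\bbv_{\varrho}$ on $S$ (the descent of $\operatorname{pr}_{\varrho}^{\ast}\bbw$ from $X\cong\bbh^{\mathscr P}$), and $\bbv_{\varrho}$ has unipotent local monodromy around $D$; this is the exact analogue over $S$ of the bundles $\mathcal L_j$ of the introduction. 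By the discussion of Deligne's canonical extension in Section~\ref{Sect:HodgeStructures} the flat bundle underlying $\bbv_{\varrho}$ and its Hodge sub-bundles extend canonically to $\widebar S$, and by GAGA on the projective variety $\widebar S$ (cf.\ Remark~\ref{AlgebraicStructuresOnHodgeSubbundles}) these extensions carry unique algebraic structures; write $\widebar L_{\varrho,\tilde{\varGamma}}$ for the resulting algebraic line bundle on $\widebar S$ (Mumford's canonical extension of the automorphic bundle), put $\mathcal L_{\varrho,\tilde{\varGamma}}\defined\widebar L_{\varrho,\tilde{\varGamma}}|_S$, and note that $\Omega^p_S$ extends to $\Omega^p_{\widebar S}(\log D)$.

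It remains to prove the two assertions for this $\mathcal L_{\varrho,\tilde{\varGamma}}$. A holomorphic section of $\bigl((\mathcal L_{\varrho,\tilde{\varGamma}})^{\otimes n}\otimes\Omega^p_S\bigr)^{\an}$ is an automorphic form of the appropriate weight (with values in $p$-forms) on $S^{\an}$; because $\dim S=g>1$, the Koecher principle forces it to be of moderate growth along $D$ and hence to extend to a holomorphic section of $(\widebar L_{\varrho,\tilde{\varGamma}})^{\otimes n}\otimes\Omega^p_{\widebar S}(\log D)$ over $\widebar S$, which by GAGA there is algebraic; restricting along $j$ shows the section was algebraic. For uniqueness, given another algebraic structure $\mathcal L'$, the tautological holomorphic isomorphism of analytifications is a nowhere-vanishing section of $\bigl((\mathcal L_{\varrho,\tilde{\varGamma}})^{-1}\otimes\mathcal L'\bigr)^{\an}$; viewing it as a holomorphic map from $S^{\an}$ into a projective bundle over $S$ that avoids the section at infinity, and then into a projective variety after compactifying, Borel's extension theorem — available precisely because $\dim S>1$ — shows that this map is algebraic, so $(\mathcal L_{\varrho,\tilde{\varGamma}})^{-1}\otimes\mathcal L'$ admits a nowhere-vanishing algebraic section and $\mathcal L'\cong\mathcal L_{\varrho,\tilde{\varGamma}}$ algebraically.

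The crux, and the sole reason for the hypothesis $\dim S>1$, is the behaviour at $D$: when $\dim S=1$ both conclusions genuinely fail, since every holomorphic line bundle on an open Riemann surface is trivial (so $L_{\varrho,\tilde{\varGamma}}$ would carry many inequivalent algebraic structures) and a holomorphic automorphic form of a fixed weight on an open curve need not be meromorphic at the cusps. What rescues matters in higher dimension is exactly the Koecher principle — sections of automorphic bundles twisted by logarithmic forms are automatically of moderate growth at $D$ — together with Borel's extension theorem, that holomorphic maps from $S^{\an}$ into projective varieties are algebraic. Pinning down these two inputs, in particular identifying $\widebar L_{\varrho,\tilde{\varGamma}}$ with Mumford's canonical extension and checking the moderate-growth bound for $\Omega^p$-valued sections, is the only substantial work; the rest is GAGA.
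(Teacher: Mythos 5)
The paper itself disposes of this lemma by simply citing Milne \cite[III, Lemma~2.2]{MR1044823}, so any direct argument is a different route; and indeed your existence step (compact case by GAGA; Hilbert--Blumenthal case via the canonical extension of the Hodge bundle over a toroidal compactification, then GAGA as in Remark~\ref{AlgebraicStructuresOnHodgeSubbundles}) and your algebraicity-of-sections step via the Koecher principle are essentially the standard arguments underlying Milne's lemma. (A harmless slip: by Lemma~\ref{AutomorphicBundleIsHiggsBundleForVHS} the bundle $L_{\varrho,\tilde{\varGamma}}$ is identified with $\mathrm{F}^1\mathcal{W}_{\varrho}$ itself, not with its dual.)

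However, your uniqueness step has a genuine gap. Borel's theorem \cite[Theorem~3.10]{MR0338456} is about holomorphic maps of algebraic varieties \emph{into} arithmetic quotients $\varGamma\backslash X$ (its proof uses the invariant metric of the target); it says nothing about holomorphic maps \emph{from} $S$ into an arbitrary projective variety or projective bundle, and such maps need not be algebraic in any dimension (e.g.\ $z\mapsto e^z$ from $\bba^1$ to $\bbp^1$, or its product with a second factor to reach dimension $>1$). Moreover the statement you are trying to prove at this point --- that an algebraic line bundle on $S$ with holomorphically trivial analytification is algebraically trivial --- is \emph{false} for general smooth quasi-projective varieties of dimension $>1$ (for $S=(E\smallsetminus\{0\})\times\bbp^1$ with $E$ an elliptic curve, every degree-zero bundle pulled back from $E\smallsetminus\{0\}$ is analytically trivial but usually not algebraically trivial), so no general compactification argument can work: one must use the arithmetic structure of $S$. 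The correct input, and in effect what Milne's proof uses, is that the Baily--Borel compactification $S^{\ast}$ is a normal projective variety whose boundary has codimension $\geq 2$ exactly when $\dim S>1$: extend both algebraic structures to coherent reflexive sheaves on $S^{\ast}$, use the Riemann--Hartogs extension theorem on normal spaces (which is also the cleanest form of the Koecher principle you invoke for sections) to extend the comparison isomorphism and its inverse across the boundary, apply GAGA on $S^{\ast}$, and restrict to $S$. With that replacement your outline goes through; otherwise one should simply quote Milne's lemma, as the paper does.
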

\begin{proof}
This is contained in \cite[III. Lemma~2.2]{MR1044823}.
\end{proof}
Also in the one-dimensional case there exists a canonical (but no more unique) algebraic line bundle underlying $L_{\varrho ,\tilde{\varGamma }}$ (see \cite[Section~4]{MR931206} for details); we denote its sheaf of sections by $\mathcal{L}_{\varrho ,\tilde{\varGamma }}$ or, if $\tilde{\varGamma }$ is clear from the context, simply by $\mathcal{L}_{\varrho }$. Furthermore, the line bundles $\mathcal{L}_{\varrho ,\tilde{\varGamma }}$ for varying $\tilde{\varGamma }$ are compatible in the obvious way, hence we obtain a line bundle $\mathcal{L}_{\varrho }$ on $\Sh^0(G,X)$ together with a continuous action of $G(\bba^f)$ lifting that on $\Sh^0(G,X)$. Note this does not extend to an action of $\mathcal{A}(G)$ on $\mathcal{L}_{\varrho }$.

\begin{Remark}\label{LRhoAndModularForms}
The bundle $\mathcal{L}_{\varrho ,\tilde{\varGamma }}$ is the holomorphic line bundle over $\operatorname{ad}\tilde{\varGamma }\backslash X$ with automorphy factors
$$j_{\varrho }(\tilde{\gamma} ,z) =c_{\varrho }z_{\varrho }+d_{\varrho }$$
where $z=(z_{\varrho })_{\varrho }\in\bbh^{\mathscr{P}}$ and
\begin{equation}\label{GammaMapsToMatrix}
\tilde{\gamma }\mapsto\begin{pmatrix}a_{\varrho }& b_{\varrho }\\ c_{\varrho }&d_{\varrho }\end{pmatrix}
\end{equation}
under the $\varrho$-th factor projection in~(\ref{ProductDecompositionGR}). That is, holomorphic sections of $\mathcal{L}_{\varrho }$ over an open subset $U\subseteq S(\bbc )$ are the same as holomorphic functions $f\colon V\to\bbc$, where $V$ is the preimage of $U$ in $X$, such that
$$f(\tilde{\gamma} z)=j_{\varrho }(\tilde{\gamma} ,z)f(z)$$
for all $\tilde{\gamma }\in\tilde{\varGamma }$ and $z\in V$. Hence global sections of $\mathcal{L}_{\varrho ,\tilde{\varGamma  }}$ can be identified with holomorphic modular forms for $\tilde{\varGamma }$ of weight one in direction $\varrho$ and weight zero in all other directions.
\end{Remark}
\begin{Remark}
The reason why we work with connected Shimura varieties instead of the more customary nonconnected Shimura varieties defined by the groups $G_0$ with $G_0(\bbq )=B^{\times }$ is that the theory of automorphic vector bundles is much simpler in the connected case, with a necessity to use stacks in the nonconnected case, cf.~\cite[Section~III.8]{MR1044823}. In our situation, any congruence subgroup of $G_0(\bbq )=B^{\times }$ contains a finite index subgroup of $\mathfrak{o}_K^{\times }$ and hence elements that operate trivially on $X$ but not on~$\mathfrak{L}_{\varrho }$.
\end{Remark}

Now fix some subgroup $\tilde{\varGamma }\in\tilde{\Sigma }(G)$ with image $\varGamma =\operatorname{ad}\tilde{\varGamma }\in\Sigma (G)$ and set $S=\Sh^0_{\varGamma }(G,X)$. The line bundles $\mathcal{L}_{\varrho }=\mathcal{L}_{\varrho ,\tilde{\varGamma }}$ on $S$ are related to the cotangent bundle~$\Omega^1_S$. Recall that the cotangent bundle of $\bbp^1$ is \emph{canonically} isomorphic to $\mathfrak{L}^{\otimes 2}$. Hence the cotangent bundle of $X\simeq\bbp^1(\bbc )^{\mathscr{P}}$ is canonically isomorphic to
$$\bigoplus_{\varrho \in\mathscr{P}}\mathfrak{L}_{\varrho }^{\otimes 2}$$
as a $G_{\bbc }$-vector bundle. This in turn implies that there is a canonical isomorphism of algebraic vector bundles on~$S$:
\begin{equation*}
\alpha\colon\bigoplus_{\varrho \in\mathscr{P}}\mathcal{L}_{\varrho }^{\otimes 2}\cong\Omega^1_S.
\end{equation*}
From this we can easily construct a Higgs field $\vartheta_{\varrho }\colon \mathcal{L}_{\varrho }\to \mathcal{L}^{-1}_{\varrho }\otimes \Omega_S^1$ as the composition
\begin{equation*}
\vartheta_{\varrho }\colon \mathcal{L}_{\varrho }\cong \mathcal{L}^{-1}_{\varrho }\otimes \mathcal{L}^{\otimes 2}_{\varrho }\hookrightarrow \mathcal{L}^{-1}_{\varrho }\otimes\bigoplus_{\sigma\in\mathscr{P} }\mathcal{L}_{\!\sigma }^{\otimes 2}\overset{\mathrm{id}\otimes\alpha }{\longrightarrow }\mathcal{L}^{-1}_{\varrho }\otimes\Omega_S^1.
\end{equation*}
These Higgs fields come from variations of Hodge structure:

For every $\varrho\in\mathscr{P}$ let $\bbw_{\varrho }$ be the pullback of the universal variation $\bbw$ on $\bbh$ along the projection $\operatorname{pr}_{\varrho }\colon\bbh^{\mathscr{P}}\to\bbh$. I.e., it is the polarised variation of real Hodge structure on $\bbh^{\mathscr{P}}$ whose underlying local system is the trivial local system $W=\bbr^2$ with the symplectic form $\psi$ as in~(\ref{StandardPolarisation}), and whose Hodge decomposition is of the form
$$(\bbw_{\varrho })_{z,\bbc }=W_z^{1,0}\oplus W_z^{0,1},\qquad W_z^{1,0}=\bbc\cdot {z_{\varrho }\choose 1}$$
(here $z=(\ldots ,z_{\varrho },\ldots )\in\bbh^{\mathscr{P}}$).

In complete analogy to the one-dimensional case, the operation of $\SL_2(\bbr )^{\mathscr{P}}$ by co\-ord\-ina\-te-wise M\"{o}bius transformations extends to an action on~$\bbw_{\varrho }$. For this reason, $\bbw_{\varrho }$ can be interpreted as a real variation of elliptic curve type on $X$ with an action of $G (\bbr )$. It therefore descends to a variation $\bbw_{\varrho ,\tilde{\varGamma }}$ on~$S$ whose monodromy representation is given by~(\ref{GammaMapsToMatrix}).

\begin{Lemma}\label{AutomorphicBundleIsHiggsBundleForVHS}
Suppress all subscripts $\tilde{\varGamma }$ for better readability. There is a canonical isomorphism of holomorphic line bundles $\mathrm{F}^1\mathcal{W}_{\varrho }\to\mathcal{L}_{\varrho  }^{\an }$ such that the diagram
\begin{equation*}
\xymatrix{
\mathrm{F}^1\mathcal{W}_{\varrho  } \ar[d]^-{\simeq}\ar[r]^-{\eta } & (\mathrm{F}^1\mathcal{W}_{\varrho  })^{-1}\otimes\Omega_S^1 \ar[d]^-{\simeq }\\
\mathcal{L}_{\varrho  }^{\an } \ar[r]_-{\vartheta^{\an }} & \mathcal{L}_{\varrho  }^{-1,\,\an }\otimes\Omega_S^1
}
\end{equation*} 
commutes up to a sign. Here $\eta$ is the Higgs field associated with the variation of Hodge structure~$\bbw_{\varrho }$.
\end{Lemma}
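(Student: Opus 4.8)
The plan is to build the isomorphism on the universal cover $X\cong\bbh^{\mathscr{P}}$, where both line bundles are given by completely explicit formulae, and then to push it down; the compatibility of Higgs fields is checked upstairs as well. Everything factors through the $\varrho$-th coordinate: over $X$ the bundle $\mathrm{F}^1\mathcal{W}_\varrho$ is the sub-line-bundle of the trivial bundle $\mathcal{W}_\varrho=\mathcal{O}_X^2$ with fibre $\bbc\cdot\binom{z_\varrho}{1}$ at $z=(z_\sigma)_{\sigma\in\mathscr{P}}$, while $\mathfrak{L}_\varrho|_X$ is the pullback of $\mathfrak{L}=\mathcal{O}(1)$ along $z\mapsto(\operatorname{pr}_\varrho\circ\beta)(z)$, $\beta$ being the Borel embedding and $\check X\cong\bbp^1(\bbc)^{\mathscr{P}}$. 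The polarisation $\psi$ of $\bbw_\varrho$ together with Lemma~\ref{HiggsBundleSelfDualForVHS}, which identifies the fibres of $\mathcal{W}_\varrho/\mathrm{F}^1\mathcal{W}_\varrho$ canonically with those of $(\mathrm{F}^1\mathcal{W}_\varrho)^{-1}$, furnish a canonical $G(\bbr)$-equivariant isomorphism $\mathrm{F}^1\mathcal{W}_\varrho\to\mathfrak{L}_\varrho|_X$ (which indeed extends to a $G_{\bbc}$-equivariant isomorphism of the corresponding bundles on $\check X$); concretely it sends the frame $\binom{z_\varrho}{1}$ to the tautological frame. By $G(\bbr)$-equivariance it descends, for every $\tilde{\varGamma}\in\tilde{\Sigma}(G)$, to an isomorphism $\mathrm{F}^1\mathcal{W}_\varrho\to\mathcal{L}_\varrho^{\an}$ of holomorphic line bundles on $S$; as a sanity check, $\gamma\binom{z_\varrho}{1}=(c_\varrho z_\varrho+d_\varrho)\binom{(\tilde{\gamma} z)_\varrho}{1}$ shows $\mathrm{F}^1\mathcal{W}_\varrho$ descends with exactly the automorphy factor $c_\varrho z_\varrho+d_\varrho$ recorded for $\mathcal{L}_\varrho$ in Remark~\ref{LRhoAndModularForms}.

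For the square it is enough to verify commutativity up to sign after pullback to $X$, where $\Omega^1_S$ becomes $\bigoplus_{\sigma\in\mathscr{P}}\operatorname{pr}_\sigma^{*}\Omega^1_\bbh$ with each summand canonically $\mathfrak{L}_\sigma^{\otimes 2}|_X$ (the cotangent bundle of $\bbp^1$ being canonically $\mathfrak{L}^{\otimes 2}$); under $\operatorname{pr}_\sigma^{*}$ this is the isomorphism $\alpha$. By its definition as the composite $\mathcal{L}_\varrho\cong\mathcal{L}_\varrho^{-1}\otimes\mathcal{L}_\varrho^{\otimes 2}\hookrightarrow\mathcal{L}_\varrho^{-1}\otimes\bigoplus_\sigma\mathcal{L}_\sigma^{\otimes 2}\overset{\mathrm{id}\otimes\alpha}{\longrightarrow}\mathcal{L}_\varrho^{-1}\otimes\Omega^1_S$, the Higgs field $\vartheta$ is the inclusion of the $\varrho$-summand followed by $\alpha$; by the recipe attaching a Higgs field to a variation of elliptic curve type (inclusion, connection $\nabla$, projection), the Higgs field $\eta$ is governed over $X$ by $\nabla\binom{z_\varrho}{1}=\binom{1}{0}\,\mathrm{d}z_\varrho$ and the identification of $\binom{1}{0}$ modulo $\binom{z_\varrho}{1}$ with a generator of $(\mathrm{F}^1\mathcal{W}_\varrho)^{-1}$. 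Unwinding the two descriptions shows that, modulo the isomorphism built above and its square, both $\vartheta$ and $\eta$ become the tautological identification of $\operatorname{pr}_\varrho^{*}\Omega^1_\bbh$ with $(\mathrm{F}^1\mathcal{W}_\varrho)^{\otimes 2}$ up to a nonvanishing scalar; evaluating at one point of $X$ (or invoking rigidity of line bundles and their automorphisms on $\bbp^1$) pins the scalar down to a sign, and equivariance propagates this identity of maps on $X$ down to the asserted commutativity on $S$.

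The mathematical content here is modest, since on the compact dual $\check X\cong\bbp^1(\bbc)^{\mathscr{P}}$ every line bundle in play carries a $G_{\bbc}$-equivariant structure and such structures are rigid; the real work, and the main obstacle, is the bookkeeping. One must keep mutually consistent the canonical identifications used — the one between $\mathcal{W}_\varrho/\mathrm{F}^1\mathcal{W}_\varrho$ and $(\mathrm{F}^1\mathcal{W}_\varrho)^{-1}$ coming from $\psi$, the canonical isomorphism $\Omega^1_{\bbp^1}\cong\mathfrak{L}^{\otimes 2}$, and the $G_{\bbc}$-linearisation of $\mathfrak{L}_\varrho$ together with the convention fixing which fixed point of $\mu_h$ the projection $\check X\to\bbp^1$ selects — and track the signs they introduce. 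Pinning the residual scalar in the Higgs diagram to $+1$ rather than $-1$ would require a further normalisation, which is why the statement only claims commutativity up to a sign.
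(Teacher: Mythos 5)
Your proposal takes essentially the same route as the paper: the paper also realises both isomorphisms explicitly on the universal cover, sending a weight-one automorphic function $f$ to the section ${z_{\varrho }f\choose f}$ of $\mathrm{F}^1\mathcal{W}_{\varrho }$ (your frame/automorphy-factor match with Remark~\ref{LRhoAndModularForms}) and the class of ${f\choose g}$ to $z_{\varrho }g-f$ (your polarisation-induced identification of $\mathcal{W}_{\varrho }/\mathrm{F}^1\mathcal{W}_{\varrho }$ with the dual), leaving the sign to the same direct computation with $\nabla{z_{\varrho }\choose 1}={1\choose 0}\,\mathrm{d}z_{\varrho }$ that you indicate. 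Only minor cosmetic points: the polarisation enters only in the quotient-versus-dual identification, not in $\mathrm{F}^1\mathcal{W}_{\varrho }\cong\mathcal{L}_{\varrho }^{\an }$ itself, and your ``evaluate at one point'' step is precisely the explicit calculation the paper likewise leaves implicit.
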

\begin{proof}
These isomorphisms can be realised as follows: a lift of $\mathcal{L}^{\an }_{\varrho }\to\mathrm{F}^1\mathcal{W}_{\varrho }$ to $X$ sends an automorphic function $f$ as in Remark~\ref{LRhoAndModularForms} to the section ${z_{\varrho }f(z)\choose f(z)}$ of $\mathrm{F}^1\mathcal{W}_{\varrho }\subset\mathcal{W}_{\varrho }$, and a lift of $\mathcal{W}_{\varrho }/\mathrm{F}^1\mathcal{W}_{\varrho }\to\mathcal{L}^{-1,\,\an }_{\varrho }$ to $X$ sends the equivalence class of ${f(z)\choose g(z)}$ to the function $z_{\varrho }g(z)-f(z)$ which is automorphic for the system of automorphy factors $j_{\varrho }(\tilde{\gamma },z)^{-1}$.
\end{proof}

\subsubsection*{Hilbert--Blumenthal case}

These objects have a more geometric description in the case where $B=\mathrm{M}_2(F)$. Recall that then for any $\varGamma =\ad\tilde{\varGamma }$ with $\tilde{\varGamma }\in\tilde{\Sigma }(G)$ the variety $S=\Sh^0_{\varGamma }(G,X)$ has an interpretation as a fine moduli space for abelian varieties with certain PEL structures, in particular there is a universal family $p\colon A\to S$ of polarised abelian varieties of rank $[F:\bbq ]$ together with a ring homomorphism $F\hookrightarrow\End (A/S)\otimes\bbq$. The polarised variation $\mathrm{R}^1p_{\ast }\bbr$ decomposes as a direct sum
\begin{equation*}
\mathrm{R}^1p_{\ast }\bbr =\bigoplus_{\varrho\colon F\to\bbr }(\mathrm{R}^1p_{\ast }\bbr )_{\varrho }
\end{equation*}
where $a\in F$ acts on the summand with subscript $\varrho$ as multiplication by $\varrho (a)$. A direct calculation shows that
\begin{equation}\label{WRhoAsSummandInRpast}
\mathrm{R}^1p_{\ast }\bbr \cong\bbw_{\varrho }.
\end{equation}
The holomorphic vector bundle underlying $\mathrm{R}^1p_{\ast }\bbc$ is canonically isomorphic to the relative de~Rham cohomology
$$\mathcal{H}^1_{\mathrm{dR}}(A/S)=\mathrm{R}^1p_{\ast }(\Omega_{A/S}^{\bullet})$$
with the Hodge filtration given by the short exact sequence
\begin{equation}\label{deRhamSESTotal}
0\to p_{\ast }\Omega_{A/S}^1\to\mathcal{H}^1_{\mathrm{dR}}(A/S)\to\mathrm{R}^1p_{\ast }\mathcal{O}_A\to 0
\end{equation}
and the Gauss--Manin connection $\nabla^{\mathrm{GM}}\colon\mathcal{H}^1_{\mathrm{dR}}(A/S)\to\mathcal{H}^1_{\mathrm{dR}}(A/S)\otimes\Omega_{S/\bbc }^1$. All these structures respect the $F$-action on cohomology and hence decompose into eigenspaces.
\begin{Proposition}
Let $S$ be a Hilbert--Blumenthal variety with universal family $p\colon A\to S$. Let $\mathcal{H}_{\varrho }$ be the subbundle of $\mathcal{H}^1_{\mathrm{dR}}(A/S)$ where $F$ operates through~$\varrho$, similarly for $(\Omega_{A/S}^1)_{\varrho }$ and $(\mathrm{R}^1p_{\ast }\mathcal{O}_A)_{\varrho }$. Then (\ref{deRhamSESTotal}) restricts to a short exact sequence
\begin{equation*}
0\to p_{\ast }(\Omega_{A/S}^1)_{\varrho }\to\mathcal{H}_{\varrho }\to (\mathrm{R}^1p_{\ast }\mathcal{O}_A)_{\varrho }\to 0
\end{equation*}
and the Gauss--Manin connection restricts to a connection
$$\nabla_{\varrho }^{\mathrm{GM}}\colon\mathcal{H}_{\varrho }\to\mathcal{H}_{\varrho }\otimes\Omega^1_S.$$
Furthermore there exist isomorphisms of algebraic line bundles
\begin{equation*}
\mathcal{L}_{\varrho }\cong p_{\ast }(\Omega_{A/S}^1)_{\varrho }\qquad\text{and}\qquad\mathcal{L}_{\varrho }^{-1}\cong (\mathrm{R}^1p_{\ast }\mathcal{O}_A)_{\varrho }
\end{equation*}
sending the Higgs field $\vartheta_{\varrho }\colon\mathcal{L}_{\varrho }\to\mathcal{L}_{\varrho }^{-1}\otimes\Omega_S^1$ to the negative of the Higgs field associated with the connection~$\nabla^{\mathrm{GM}}_{\varrho }$.
\end{Proposition}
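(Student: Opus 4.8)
The strategy is to get the decomposition formally from the $F$-action, transport the content of Lemmas~\ref{HiggsBundleSelfDualForVHS} and~\ref{AutomorphicBundleIsHiggsBundleForVHS} through the isomorphism~\eqref{WRhoAsSummandInRpast}, and finally promote the analytic comparisons to algebraic ones. First I would produce the eigenspace decomposition. The map $F\hookrightarrow\End(A/S)\otimes\bbq$ is an action by endomorphisms of the family $p\colon A\to S$, and relative de~Rham cohomology, the Hodge sub-bundle $p_{\ast }\Omega^1_{A/S}$, the quotient $\mathrm{R}^1p_{\ast }\mathcal{O}_A$ and the Gauss--Manin connection are all functorial in the family; hence $F$ acts $\mathcal{O}_S$-linearly on $\mathcal{H}^1_{\mathrm{dR}}(A/S)$ preserving~\eqref{deRhamSESTotal} and commuting with $\nabla^{\mathrm{GM}}$. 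Since $S$ is a $\bbc$-variety and $F$ is totally real, $F\otimes_{\bbq }\mathcal{O}_S\cong\prod_{\varrho }\mathcal{O}_S$, and the corresponding central idempotents split $\mathcal{H}^1_{\mathrm{dR}}(A/S)$ into the rank-two summands $\mathcal{H}_{\varrho }$; applying an idempotent is exact, so~\eqref{deRhamSESTotal} restricts to the asserted short exact sequence, with sub and quotient of rank one, and $\nabla^{\mathrm{GM}}$ restricts to $\nabla^{\mathrm{GM}}_{\varrho }$ on $\mathcal{H}_{\varrho }$. (Griffiths transversality guarantees that $\nabla^{\mathrm{GM}}_{\varrho }$ does not preserve $p_{\ast }(\Omega^1_{A/S})_{\varrho }$, so the associated Higgs field below is non-trivial.)

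Next I would identify the line bundles. By~\eqref{WRhoAsSummandInRpast} the $\varrho$-eigen-variation $(\mathrm{R}^1p_{\ast }\bbr )_{\varrho }$ is canonically $\bbw_{\varrho }$; under this identification of variations its Hodge filtration $\mathrm{F}^1\mathcal{W}_{\varrho }$ is $p_{\ast }(\Omega^1_{A/S})_{\varrho }$ and its quotient $\mathcal{W}_{\varrho }/\mathrm{F}^1\mathcal{W}_{\varrho }$ is $(\mathrm{R}^1p_{\ast }\mathcal{O}_A)_{\varrho }$. Lemma~\ref{AutomorphicBundleIsHiggsBundleForVHS} supplies a canonical isomorphism $\mathrm{F}^1\mathcal{W}_{\varrho }\cong\mathcal{L}_{\varrho }^{\an }$, whence $p_{\ast }(\Omega^1_{A/S})_{\varrho }\cong\mathcal{L}_{\varrho }^{\an }$ holomorphically. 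For the quotient I would apply Lemma~\ref{HiggsBundleSelfDualForVHS} to $\bbw_{\varrho }$: the polarisation carried by the universal family is $F$-linear (the Rosati involution restricts to the identity on the totally real field $F$), so its $\varrho$-component polarises $\bbw_{\varrho }$, and the lemma yields $\mathcal{W}_{\varrho }/\mathrm{F}^1\mathcal{W}_{\varrho }\cong(\mathrm{F}^1\mathcal{W}_{\varrho })^{-1}$, i.e.\ $(\mathrm{R}^1p_{\ast }\mathcal{O}_A)_{\varrho }\cong\mathcal{L}_{\varrho }^{-1,\,\an }$; equivalently, the symplectic pairing trivialises $\det\mathcal{H}_{\varrho }$, so the quotient is the inverse of the sub-bundle.

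It then remains to make these isomorphisms algebraic and to read off the Higgs fields. When $\dim S=\lvert\mathscr{P}\rvert>1$ algebraicity is immediate from Lemma~\ref{Lemma:AlgebraicStructureUnique}: the algebraic coherent sheaves $p_{\ast }(\Omega^1_{A/S})_{\varrho }$ and $(\mathrm{R}^1p_{\ast }\mathcal{O}_A)_{\varrho }$ furnish algebraic structures on the holomorphic bundles in question, which by uniqueness must be $\mathcal{L}_{\varrho }$ and $\mathcal{L}_{\varrho }^{-1}$. In the remaining case $F=\bbq$, so that $S$ is a modular curve, one invokes that Milne's canonical algebraic structure \cite[Section~4]{MR931206} on the weight-one automorphic bundle in the PEL setting \emph{is} the Hodge-bundle structure coming from the universal family, so $\mathcal{L}_{\varrho }\cong p_{\ast }(\Omega^1_{A/S})_{\varrho }$ algebraically, the second isomorphism following by dualising together with the algebraic triviality of $\det\mathcal{H}_{\varrho }$. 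Finally, unwinding the definition of $\vartheta_{\varrho }$ from $\Omega^1_S\cong\bigoplus_{\sigma }\mathcal{L}_{\sigma }^{\otimes 2}$ and comparing with Lemma~\ref{AutomorphicBundleIsHiggsBundleForVHS}, $\vartheta_{\varrho }^{\an }$ coincides up to sign with the Higgs field of $\bbw_{\varrho }$; but the latter is by construction the composite of the inclusion of $\mathrm{F}^1$, the connection, and the projection to the quotient, and that connection is exactly $\nabla^{\mathrm{GM}}_{\varrho }$, so the Higgs field of $\bbw_{\varrho }$ is precisely the Higgs field associated with $\nabla^{\mathrm{GM}}_{\varrho }$ under the identifications above. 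As $\nabla^{\mathrm{GM}}$ and $\vartheta_{\varrho }$ are algebraic and an equality of algebraic maps can be tested analytically, $\vartheta_{\varrho }$ equals $\pm$ that Higgs field, and tracking the signs in Lemma~\ref{AutomorphicBundleIsHiggsBundleForVHS} and~\eqref{WRhoAsSummandInRpast} shows the sign is $-1$.

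The step I expect to be most delicate is not any single identification but the comparison of algebraic structures in the one-dimensional case: there $\mathcal{L}_{\varrho }$ has no intrinsic algebraic structure, and one must verify that the fixed choice is the de~Rham (moduli-theoretic) one, since otherwise the comparison with the algebraic Gauss--Manin connection would be ill-posed. A lesser nuisance is the sign bookkeeping in the last assertion, which amounts to composing the automorphy-factor computations underlying Lemma~\ref{AutomorphicBundleIsHiggsBundleForVHS} with the explicit isomorphism~\eqref{WRhoAsSummandInRpast}.
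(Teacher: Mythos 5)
Your proposal is correct and follows essentially the same route as the paper: identify the $\varrho$-eigenpieces analytically by combining Lemma~\ref{AutomorphicBundleIsHiggsBundleForVHS} with~(\ref{WRhoAsSummandInRpast}), then invoke Lemma~\ref{Lemma:AlgebraicStructureUnique} to upgrade to algebraic isomorphisms when $\dim S>1$, treating $F=\bbq$ separately. The paper's proof is just a terser version of this; your extra detail on the idempotent decomposition, the duality via the polarisation (Lemma~\ref{HiggsBundleSelfDualForVHS}) and the sign bookkeeping fills in exactly what the paper leaves implicit.
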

\begin{proof}
If $\dim S>1$ (equivalently, if $F\neq\bbq$) the existence of these isomorphisms only needs to be checked in the analytic category by Lemma~\ref{Lemma:AlgebraicStructureUnique}; this is accomplished by combining Lemma~\ref{AutomorphicBundleIsHiggsBundleForVHS} with~(\ref{WRhoAsSummandInRpast}). The remaining case $F=\bbq$ is easy.
\end{proof}

\section{Geometry of modular embeddings}

\noindent In this section we study regular maps from algebraic curves to quaternionic Shimura varieties as in the previous section.

\subsection{Modular embeddings}\label{Sect:ME}

Let $(G,X)$ be a quaternionic connected Shimura datum as before, associated with a quaternion algebra $B$ over a totally real number field~$F$. We shall consider triples $(\varDelta ,\varphi ,\tif )$ of the following form:
\begin{enumerate}
\item $\varDelta$ is a lattice in $\PGL_2(\bbr )^+$;
\item $\varphi$ is a group homomorphism $\varphi\colon\varDelta\to\gad (\bbq )^+$ whose image is contained in an arithmetic subgroup of $\gad (\bbq )^+$;
\item $\tif$ is a holomorphic map $\tif\colon\bbh\to X$ which is equivariant for $\varphi$, i.e.\ which satisfies
\begin{equation*}
\tif (\delta z)=\varphi (\delta )\tif (z)\qquad\text{for all }\delta\in\varDelta ,\, z\in\bbh.
\end{equation*}
\end{enumerate}
Using the classical description of $(G,X)$ we can interpret $\tif$ as a map $\bbh\to\bbh^{\mathscr{P}}$; we denote the coordinate function $\bbh\to\bbh$ for $\varrho\in\mathscr{P}$ by $\tif_{\varrho }$. Similarly we can view $\varphi$ as a homomorphism $\varDelta\to\gad (\bbr )^+$, and via (\ref{ProductDecompositionGRad}) we obtain for every $\varrho\in\mathscr{P}$ a homomorphism $\varphi_{\varrho }\colon\varDelta\to\PGL_2(\bbr )^+$.
\begin{PropDef}\label{Def:ME}
Let $(G,X)$ be a quaternionic Shimura datum associated with $B/F$, let $(\varDelta, \varphi ,\tif )$ as above, and let $\varrho\in\mathscr{P}$. The following are equivalent:
\begin{enumerate}
\item $\varphi_{\varrho }\colon\varDelta\to\PGL_2(\bbr )^+$ is conjugate by an element of $\PGL_2(\bbr )^+$ to the identity embedding $\varDelta\hookrightarrow\PGL_2(\bbr )^+$.
\item $\tif_{\varrho }\colon\bbh\to\bbh$ is a biholomorphism.
\end{enumerate}
If this is the case we call $(\varphi ,\tif )$ a \emph{modular embedding} for $\varDelta$ with respect to~$\varrho$.
\end{PropDef}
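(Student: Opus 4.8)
The plan is to reduce the asserted equivalence to elementary properties of holomorphic self-maps of $\bbh$; no arithmeticity of $\varphi(\varDelta)$ enters. Using the classical description of $(G,X)$ from Section~\ref{Subs:Shimura}, the $\varphi$-equivariance of $\tif$ becomes, after composing with the $\varrho$-th coordinate projection $\bbh^{\mathscr P}\to\bbh$, the identity $\tif_\varrho(\delta z)=\varphi_\varrho(\delta)\,\tif_\varrho(z)$ for all $\delta\in\varDelta$ and $z\in\bbh$, where $\tif_\varrho\colon\bbh\to\bbh$ is holomorphic, each $\varphi_\varrho(\delta)$ lies in $\PGL_2(\bbr)^+=\Aut(\bbh)$, and $\delta$ acts by the given M\"obius transformation. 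For the implication (ii)$\Rightarrow$(i) this is immediate: if $\tif_\varrho$ is a biholomorphism of $\bbh$ it equals some $g\in\PGL_2(\bbr)^+$, since the holomorphic automorphism group of $\bbh$ is exactly $\PGL_2(\bbr)^+$, and the displayed identity then reads $g\delta=\varphi_\varrho(\delta)\,g$, i.e.\ $\varphi_\varrho(\delta)=g\delta g^{-1}$, so $\varphi_\varrho$ is the identity inclusion conjugated by~$g$.

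For (i)$\Rightarrow$(ii) I would write $\varphi_\varrho(\delta)=g\delta g^{-1}$ for a fixed $g\in\PGL_2(\bbr)^+$ and set $h=g^{-1}\circ\tif_\varrho\colon\bbh\to\bbh$; a one-line computation gives $h(\delta z)=\delta h(z)$, so $h$ commutes with the $\varDelta$-action, and it suffices to show that such an $h$ is biholomorphic --- in fact $h=\id_\bbh$, so that $\tif_\varrho=g$. The map $h$ is nonconstant, since a constant value would be a common fixed point of the lattice $\varDelta$ in $\bbh$. Replacing $\varDelta$ by a torsion-free subgroup of finite index (again a lattice), $h$ descends to a nonconstant holomorphic self-map $\bar h$ of the finite-volume hyperbolic Riemann surface $Y=\varDelta\backslash\bbh$, and the equivariance of the lift $h$ for the \emph{identity} homomorphism means precisely that $\bar h$ induces the identity on $\pi_1(Y)$, so $\bar h\sim\id_Y$. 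Since $Y$ is hyperbolic of finite volume, $\bar h$ extends to a holomorphic self-map $\hat h$ of the smooth compactification $\overline Y$; because $\bar h$ is the identity on $\pi_1(Y)$, each cusp loop is carried to a cusp loop wrapping once around the same cusp (a parabolic is not conjugate to a nontrivial power of itself in a torsion-free Fuchsian group), i.e.\ $\hat h$ fixes every cusp $c$ and is unramified there. Then for such a $c$ every point of $\hat h^{-1}(c)$ must equal $c$ --- another cusp is excluded because cusps are fixed pointwise, and a point of $Y$ because $\hat h(Y)=\bar h(Y)\subseteq Y$ --- whence $\deg\hat h=e_c(\hat h)=1$; a degree-one nonconstant holomorphic map of compact Riemann surfaces is an isomorphism, so $\hat h$, hence $\bar h$, hence $h$, is an isomorphism. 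Therefore $h\in\Aut(\bbh)=\PGL_2(\bbr)^+$ centralises the non-elementary group $\varDelta$ and must be trivial, giving $\tif_\varrho=g$.

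The only step requiring care in (i)$\Rightarrow$(ii) is the extension of $\bar h$ over the cusps; it is vacuous when $Y$ is compact --- there one argues instead that $\bar h$ acts trivially on $H^1(Y)$, hence on $H^2(Y)\cong\bbz$, so $\deg\bar h=1$ --- and in the cusped case it is a standard consequence of the finiteness of $\operatorname{vol}(Y)$: by the Schwarz--Pick lemma $\bar h$ is $1$-Lipschitz for the complete hyperbolic metric, so a non-contractible loop whose length tends to $0$ cannot remain in a compact part of $Y$; this forces $\bar h$ to be proper and to carry a horoball neighbourhood of each end into one of the same end, so $\bar h$ extends holomorphically across the puncture. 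Everything else in the argument is formal.
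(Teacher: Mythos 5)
Your proposal is correct, but it takes a genuinely different route from the paper for the nontrivial implication. The paper disposes of the whole equivalence in one line as ``a simple consequence of the Schwarz--Pick Lemma'': after conjugating so that $\varphi_{\varrho}$ is the identity inclusion, one picks a hyperbolic $\delta\in\varDelta$ with translation length $\ell(\delta)$ and notes that for $z$ on its axis
$d\bigl(\tif_{\varrho}(z),\delta\tif_{\varrho}(z)\bigr)=d\bigl(\tif_{\varrho}(z),\tif_{\varrho}(\delta z)\bigr)\le d(z,\delta z)=\ell(\delta)$,
while every point is displaced by $\delta$ at least $\ell(\delta)$; the equality case of Schwarz--Pick then forces $\tif_{\varrho}$ to be an isometry of $\bbh$, i.e.\ a biholomorphism, with no need to descend to the quotient or to treat cusps separately. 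You instead descend to the quotient curve, pass to a torsion-free finite-index subgroup, extend over the punctures, and run a degree-one argument on the compactification, invoking Schwarz--Pick only to control the cusp neighbourhoods; you then finish with the triviality of the centraliser of a non-elementary Fuchsian group. Your argument is sound (the delicate points --- nonconstancy, the extension of $\bar h$ across the punctures via short loops in a fixed parabolic free homotopy class, the unramifiedness and fixing of each cusp, and the fibre computation giving degree one --- are all handled correctly, if tersely), and it even yields the slightly stronger conclusion $\tif_{\varrho}=g$ exactly. What you trade for this is length and case distinctions (compact versus cusped); the paper's metric argument is shorter, uniform in both cases, and stops as soon as isometry is established, which is all that condition (ii) requires. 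Your easy direction (ii)$\Rightarrow$(i) coincides with the intended one.
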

\begin{proof}
The equivalence of the two conditions is a simple consequence of the Schwarz--Pick Lemma.
\end{proof}
If $(\varphi ,\tif )$ is a modular embedding for $\varDelta$ we can adjust the isomorphism (\ref{ProductDecompositionGRad}), whose precise choice is not essential for any of our constructions, in such a way that $\varphi_{\varrho }\colon\varDelta\to\PGL_2(\bbr )^+$ is the identity embedding and $\tif_{\varrho }\colon\bbh\to\bbh$ is the identity. Whenever we consider a specific modular embedding we will therefore tacitly assume that this is the case.

Sometimes we will need a stricter condition:
\begin{equation}\label{MELiesInImageOfGQ}
\varphi (\varDelta )\text{ is contained in the image of }G(\bbq )\to\gad (\bbq )^+.
\end{equation}
Let $\varDelta\subset\PGL_2(\bbr )^+$ be a lattice and let $\tilde{\varDelta }$ be its preimage in $\SL_2(\bbr )$. The \emph{trace field} of $\varDelta$ is the subfield
$$\bbq (\tr\varDelta )=\bbq (\tr\tilde{\delta }\mid\tilde{\delta }\in\tilde{\varDelta})\subset\bbr$$
generated by all traces of elements of $\tilde{\varDelta }$, and the \emph{invariant trace field} of $\varDelta$ is the intersection of all trace fields of finite index subgroups of $\varDelta$. If $\varDelta^{(2)}$ is the subgroup generated by all $\delta^2$ for $\delta\in\varDelta$ (a normal subgroup of finite index), then the invariant trace field of $\varDelta$ equals $\bbq (\tr\varDelta^{(2)})$, see \cite[Theorem~3.3.4]{MR1937957}.
\begin{Proposition}\label{InclusionOfTraceFields}
Let $(G,X)$ as above be associated with $B/F$. Let $\varDelta\subset\PGL_2(\bbr )^+$ be a lattice such that there exists a modular embedding for $\varDelta$ with respect to $\varrho\in\mathscr{P}$.
\begin{enumerate}
\item The invariant trace field of $\varDelta$ is contained in $\varrho (F)$. All traces $\tr\tilde{\delta }$ for $\tilde{\delta} \in\tilde{\varDelta }$ are algebraic integers.
\item If (\ref{MELiesInImageOfGQ}) is true then also the trace field $\bbq (\tr\varDelta )$ is contained in $\varrho (F)$.
\end{enumerate}
\end{Proposition}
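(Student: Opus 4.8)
The plan is to pull everything back into the quaternion algebra $B/F$ and work with reduced trace and norm. After adjusting the isomorphism~(\ref{ProductDecompositionGRad}) as in the discussion following Proposition and Definition~\ref{Def:ME}, I may assume that $\varphi_{\varrho }$ is the identity inclusion $\varDelta\hookrightarrow\PGL_2(\bbr )^+$, so that for $\delta\in\varDelta$ the element $\varphi (\delta )\in\gad (\bbq )^+\subseteq B^{\times }/F^{\times }$ has $\varrho$-component $\delta$. The device underlying all three assertions is the following: given a representative $b\in B^{\times }$ of $\varphi (\delta )$, the reduced norm $\operatorname{nrd}b$ is positive at $\varrho$ (because the $\varrho$-component of $\varphi (\delta )\in\gad (\bbr )^+$ lies in $\PGL_2(\bbr )^+$), so $c_{\delta }=b^2(\operatorname{nrd}b)^{-1}$ has reduced norm $1$ and hence lies in $H(F)=B^1$; it is independent of the choice of $b$, since replacing $b$ by $\lambda b$ with $\lambda\in F^{\times }$ leaves it unchanged. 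Under the fixed isomorphism $B\otimes_{F,\varrho }\bbr\cong\mathrm{M}_2(\bbr )$ the element $c_{\delta }$ maps to $\tilde\delta^2\in\SL_2(\bbr )$ for either lift $\tilde\delta$ of $\delta$, because $\varrho (b)\,\varrho (\operatorname{nrd}b)^{-1/2}$ is such a lift. Hence $(\tr\tilde\delta )^2=\tr (\tilde\delta^2)+2=\varrho (\operatorname{trd}c_{\delta })+2\in\varrho (F)$.

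With this in hand, part~(i) for the invariant trace field is immediate: writing $w\in\varDelta^{(2)}$ as $\prod_i\delta_i^{2\varepsilon_i}$ with $\varepsilon_i\in\{\pm 1\}$, the element $\prod_i c_{\delta_i}^{\varepsilon_i}$ lies in $B^1$ and maps under $\varrho$ to a lift of $w$ in $\SL_2(\bbr )$, so the trace of any lift of $w$ is $\pm\varrho (\operatorname{trd}\prod_i c_{\delta_i}^{\varepsilon_i})\in\varrho (F)$; thus $\bbq (\tr\varDelta^{(2)})\subseteq\varrho (F)$, and the invariant trace field, which equals $\bbq (\tr\varDelta^{(2)})$ by \cite[Theorem~3.3.4]{MR1937957}, is contained in $\varrho (F)$. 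Part~(ii) is just as quick: under~(\ref{MELiesInImageOfGQ}) one may take the representative of $\varphi (\delta )$ already in $G(\bbq )=H(F)=B^1$, so $\varrho$ carries it to an element of $\SL_2(\bbr )$ projecting to $\delta$, i.e.\ to $\pm\tilde\delta$, whence $\tr\tilde\delta=\pm\varrho (\operatorname{trd}u)\in\varrho (F)$ for every $\delta$, and so $\bbq (\tr\varDelta )\subseteq\varrho (F)$.

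For the integrality statement in~(i) the idea is that, since $\varphi (\varDelta )$ lies in an arithmetic subgroup of $\gad (\bbq )^+$, hence in one commensurable with the image of $\mathscr{O}^{\times }$ for some order $\mathscr{O}\subset B$, there is a finite-index normal subgroup $N\trianglelefteq\varDelta$ with $\varphi (N)$ contained in that image. For $\delta\in N$ and a representative $u\in\mathscr{O}^{\times }$ one has $\operatorname{trd}u\in\mathfrak{o}_F$ and $\operatorname{nrd}u\in\mathfrak{o}_F^{\times }$, so $(\tr\tilde\delta )^2=\varrho \big((\operatorname{trd}u)^2(\operatorname{nrd}u)^{-1}\big)$ is an algebraic integer, hence so is $\tr\tilde\delta$. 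For arbitrary $\delta\in\varDelta$ I would then use that $\delta^m\in N$ for $m$ the order of $\delta$ in $\varDelta /N$, together with the fact that $\tr (\tilde\delta^m)$ is a monic polynomial with integer coefficients in $\tr\tilde\delta$ (the polynomial expressing $\lambda^m+\lambda^{-m}$ in $\lambda+\lambda^{-1}$) that evaluates to an algebraic integer, to conclude that $\tr\tilde\delta$ is an algebraic integer.

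I expect the integrality of \emph{all} traces in~(i) to be the only genuine obstacle: the arithmetic group containing $\varphi (\varDelta )$ is merely commensurable with the image of $\mathscr{O}^{\times }$, so one cannot produce an integral representative of every $\varphi (\delta )$ directly, and must pass to a finite-index subgroup and then propagate integrality along the trace relations for powers. Everything else is routine bookkeeping with reduced trace and norm and with the product decomposition~(\ref{ProductDecompositionGRad}).
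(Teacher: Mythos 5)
Your proof is correct, but it reaches the conclusion by a different mechanism than the paper. The paper shrinks $\varDelta$ to a finite-index subgroup $\varDelta'$ whose trace field is the invariant trace field $K$ and arranges $\varphi (\varDelta ')\subseteq\operatorname{ad}\tilde{\varGamma }$ for a congruence subgroup $\tilde{\varGamma }\subset B^1$; with $\varphi_{\varrho }$ the identity, $\tilde{\varDelta }'$ then lies (up to sign) in the $\varrho$-image of $\tilde{\varGamma }$, which yields in one stroke $K\subseteq\varrho (F)$, the identification $A\otimes_{K,\varrho^{-1}}F\cong B$ of the invariant quaternion algebra with $B$, and integrality of traces on $\tilde{\varDelta }'$; integrality then spreads to all of $\tilde{\varDelta }$ because eigenvalues whose powers are algebraic units are themselves algebraic units. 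You instead argue element by element: the device $b\mapsto b^2(\operatorname{nrd}b)^{-1}$ converts an arbitrary representative $b\in B^{\times }$ of $\varphi (\delta )$ into an element of $B^1$ mapping to $\tilde{\delta }^2$ under $\varrho$, so $(\tr\tilde{\delta })^2\in\varrho (F)$ and, taking products, $\bbq (\tr\varDelta^{(2)})\subseteq\varrho (F)$ without passing to a subgroup and without using arithmeticity at all; arithmeticity enters only for integrality, where your finite-index subgroup landing in the image of an order's unit group, combined with the monic polynomials expressing $\tr (\tilde{\delta }^m)$ in $\tr\tilde{\delta }$, plays exactly the role of the paper's unit-eigenvalue argument, and your treatment of (ii) via a representative in $G(\bbq )=B^1$ is the same as the paper's. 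What your route buys is a cleaner separation of hypotheses (only $\gad (\bbq )=B^{\times }/F^{\times }$ is needed for the field statements) and no appeal to the invariant quaternion algebra; what the paper's route buys is brevity and, as a by-product, the isomorphism $A\otimes_{K,\varrho^{-1}}F\cong B$, a structural fact that is useful beyond this proposition. Two immaterial points: you could take representatives in $\mathscr{O}^1$ rather than $\mathscr{O}^{\times }$ (the images in $\gad (\bbq )^+$ are commensurable, and the reduced norm then drops out of your formula), and both arguments rely on the cited identity of the invariant trace field with $\bbq (\tr\varDelta^{(2)})$.
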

\begin{proof}
Let $K$ be the invariant trace field and let $\varDelta '\subseteq\varDelta $ be a subgroup of finite index with trace field~$K$. Let $A\subset\mathrm{M}_2(\bbr )$ be the $K$-subalgebra generated by $\tilde{\varDelta }'\subset\SL_2(\bbr )\subset\mathrm{M}_2(\bbr )$; this is a quaternion algebra over~$K$.

By assumption the image of $\varDelta$ in $\gad (\bbq )^+$ is contained in an arithmetic subgroup containing a group in $\Sigma (G)$. By further shrinking $\varDelta '$ we may assume that $\varphi (\varDelta ')\subseteq\varGamma =\ad\tilde{\varGamma }$ for some $\tilde{\varGamma }\in\tilde{\Sigma }(G)$.

For (i), assume that $\varphi_{\varrho }\colon\varDelta \to\PGL_2(\bbr )^+$ is the identity inclusion; then $\tilde{\varDelta }'$ is contained in the image of $\tilde{\varGamma }\subset B^1=G(\bbq )$ under the $\varrho$-th coordinate map in~(\ref{ProductDecompositionGR}). But then $K\subseteq\varrho (F)$ since the latter is the trace field of $\tilde{\varGamma }$, and the identity inclusion induces an isomorphism $A\otimes_{K,\varrho^{-1}}F\cong B$. The traces of elements in $\tilde{\varDelta }'$ are then of the form $\varrho (x)$ for algebraic integers $x\in\mathfrak{o}_F$, hence the eigenvalues of any element in $\tilde{\varDelta }'$ are algebraic units. Since every element in $\tilde{\varDelta }$ has a finite nontrivial power in $\tilde{\varDelta }'$ also the eigenvalues of elements of $\tilde{\varDelta }$ are algebraic units, hence the traces are algebraic integers.

A very similar argument proves~(ii).
\end{proof}

\begin{Examples}\label{ExamplesME}
\begin{enumerate}
\item If $\varDelta$ is an arithmetic group itself there is a tautological modular embedding. Recall that $\varDelta$ is arithmetic if and only if it is commensurable to a group of the form $\mathscr{O}_0^1$ for some order $\mathscr{O}_0$ in a quaternion algebra $A_0/K_0$ which splits at precisely one archimedean place. Then there is a connected Shimura datum $(G_0,X_0)$ with $G_0(\bbq )=A_0^1$ and a modular embedding $(\tif ,\varphi )$ with $\tif\colon\bbh\to X_0$ a biholomorphism and $\varphi\colon\varDelta\to\gad_0(\bbq )^+$ restricting to the canonical embedding on~$\varDelta^{(2)}$. Note that this can be extended to $\varDelta$ by the Skolem--Noether theorem which implies that $\varDelta$ is contained in the image of $(A_0^{\times })^+$ in $\PGL_2(\bbr )^+$, which is canonically isomorphic to $\gad_0 (\bbq )^+$.

We may, however, start with any totally real extension $K/K_0$ and set $A=A_0\otimes_{K_0}K$. From this we can easily construct a modular embedding for $\varDelta$ into $(G,X)$ with $G(\bbq )=A^1$ and $X\simeq\bbh^{[K:K_0]}$; each of its coordinates is (up to a change of coordinates) given by $\tif$ and $\varphi$ constructed before for $(G_0,X_0)$.
\item For a specific example, let $\varDelta$ be the \emph{Fricke group} for a rational prime $p$, which is the group generated by the images of
$$\left\{\begin{pmatrix} a& b\\ c&d\end{pmatrix}\in\SL_2(\bbz )\,\middle\lvert\, c\equiv 0\bmod p\right\}\quad\text{and}\quad\begin{pmatrix}0&-1\\ p &0\end{pmatrix}$$
in $\PGL_2(\bbq )^+$. Considered as a subgroup of $\PGL_2(\bbr )^+$, its invariant trace field is $K_0=\bbq$ but its trace field is $K=\bbq (\sqrt{p})$ because the lifts of the additional matrix generator to $\SL_2(\bbr )$ are
$$\pm\begin{pmatrix} 0&-\frac 1{\sqrt{p}}\\ \sqrt{p} &0\end{pmatrix}.$$
The associated quaternion algebra is $B=\mathrm{M}_2(K)$, hence we can choose $\varGamma$ to be a congruence subgroup of $\SL_2(K)$, i.e.\ a \emph{Hilbert--Blumenthal modular group} operating on $\bbh^2$. Now Galois conjugation operates trivially on the elements of $\varDelta$, hence the diagonal map $\bbh\to\bbh^2$ is a modular embedding for $\varDelta$, and the image in the Hilbert--Blumenthal surface $\varGamma\backslash\bbh^2$ is a \emph{Hirzebruch--Zagier cycle}, cf.~\cite{MR0453649}.

Note that the modular embedding into $\PGL_2(K)^+$ satisfies (\ref{MELiesInImageOfGQ}), but the tautological embedding into $\PGL_2(\bbq )^+$ does not.

\item In a similar way we can construct a modular embedding where $\varDelta$ is cocompact but the arithmetic subgroups of $\gad (\bbq )^+$ are not. Let $\varDelta$ be a cocompact arithmetic group derived from a quaternion algebra $A_0$ over $K_0$, and choose $K$ to be a totally real extension of $K_0$ that splits $A_0$. Then the construction in (ii) yields a modular embedding $(\tif ,\varphi )$ with $\varphi\colon\varDelta\to\gad (\bbq )^+=\PGL_2(K)^+$.
\item Let $p,q,r\in\bbn\cup\{\infty \}$ with
\begin{equation*}
\frac 1p+\frac 1q+\frac 1r<1,
\end{equation*}
where we set $\frac 1\infty =0$. Then there exists a lattice $\varDelta_{p,q,r}\subset\PGL_2(\bbr )^+$, unique up to conjugacy, with presentation
\begin{equation*}
\varDelta_{p,q,r} =\langle x,y,z\mid x^p=y^q=z^r=xyz=1\rangle
\end{equation*}
where `$x^{\infty }=1$' is interpreted as `$x$ is a nontrivial parabolic element of $\PGL_2(\bbr )^+$', and similarly for $y$ and~$z$. This group is called the \emph{triangle group of signature $(p,q,r)$}. Its trace field is
$$K_{p,q,r}=\bbq \bigg(\!\cos\frac\pi p,\cos\frac\pi q,\cos\frac\pi r\bigg)$$
and it generates a quaternion algebra $A_{p,q,r}$ over this trace field. Then there is a connected quaternionic Shimura datum $(G,X)$ with $G(\bbq )=A_{p,q,r}^1$ and a modular embedding consisting of $\tif\colon\bbh\to X$ and $\varphi\colon\varDelta\to\gad (\bbq )^+$. This modular embedding is constructed in three different ways in~\cite{MR1075639}, by Schwarz triangle functions, by hypergeometric differential equations and by period maps for special families of abelian varieties. It has the convenient property (\ref{MELiesInImageOfGQ}), as can be seen from the first of the three constructions in~\cite{MR1075639}.
\item The uniformising groups of \emph{Teichm\"{u}ller curves}, i.e.\ algebraic curves which are immersed in the moduli space $\mathcal{M}_g$ of smooth projective genus $g$ curves in a totally geodesic way for the Teichm\"{u}ller metric, admit modular embeddings. This was shown in \cite[Theorem~10.1]{MR1992827} for $g=2$ and in \cite[Corollary~2.11]{MR2188128} for the general case. These modular embeddings are studied in detail in~\cite{MoellerZagier2015}. The quaternion algebras in these modular embeddings are always of the form $\mathrm{M}_2(F)$ with $\varrho (F)$ being equal to both the trace field and the invariant trace field. The equality of the latter is \cite[Corollary~6.1.11]{Diplom}. This implies that they, too, have the property (\ref{MELiesInImageOfGQ}) that $\varphi (\varDelta )$ is contained in the image of $G(\bbq )=\SL_2(F)$.
\end{enumerate}
\end{Examples}
A modular embedding $\tif\colon\bbh\to X$ descends to a holomorphic map between the quotients $f\colon\varDelta\backslash\bbh\to\varGamma\backslash X$. By a theorem of Borel \cite[Theorem~3.10]{MR0338456} this is induced by a regular map of normal algebraic varieties $f\colon C\to S=\Sh_{\varGamma }^0(G,X)$. If $f$ arises in this fashion we say it is \emph{covered by a modular embedding}.

\subsection{Maps from curves to Shimura varieties}

Let $(G,X)$ be a connected Shimura datum defined by a quaternion algebra $B/F$ as before, let $\varGamma\in\Sigma (G)$ and let $S=\Sh_{\varGamma }^0(G,X)$. Let further $C$ be a smooth complex algebraic curve and let $f\colon C\to S$ be a morphism of algebraic varieties. From Liouville's Theorem we easily see that if the universal covering of $C^{\an }$ is $\bbc$ or $\bbp^1(\bbc )$ then $f$ is constant, so we assume from now on that $C$ is hyperbolic.

After choosing appropriate basepoints we can lift $f$ to a holomorphic map between the universal covers
\begin{equation*}
\tif\colon\bbh\to X
\end{equation*}
which is equivariant for the induced map on fundamental groups
\begin{equation*}
\tilde{\varphi }=f_{\ast }\colon\varDelta =\pi_1(C^{\an })\to\varGamma\subset G (\bbr ).
\end{equation*}
Using the classical description $X\simeq\bbh^{\mathscr{P}}$ and $G (\bbr )\twoheadrightarrow\SL_2(\bbr )^{\mathscr{P}}$ we can split these up into coordinate components
\begin{equation*}
\tif_{\varrho }\colon\bbh\to\bbh\qquad\text{and}\qquad\tilde{\varphi }_{\varrho }\colon\pi_1(C^{\an })\to\SL_2(\bbr ).
\end{equation*}
We can also pull back the various bundles on $S$ along~$f$. Hence we obtain on $C$, for every $\varrho\in\mathscr{P}$, a real variation $f^{\ast }\bbw_{\varrho }$ of elliptic curve type, a line bundle $f^{\ast }\mathcal{L}_{\varrho }$ and a Higgs field
\begin{equation}\label{HiggsPullback}
f^{\ast }\vartheta_{\varrho }\colon f^{\ast }\mathcal{L}_{\varrho }\to f^{\ast }\mathcal{L}^{-1}_{\varrho }\otimes f^{\ast }\Omega_S^1\overset{(\ast )}{\to }f^{\ast }\mathcal{L}^{-1}_{\varrho }\otimes\omega_C,
\end{equation}
where $f^{\ast }\Omega_S^1\to\Omega_C^1=\omega_C$ in $(\ast )$ is the canonical map. Note that since $f$ is an algebraic map and the Higgs field $\vartheta_{\varrho }$ on $S$ is algebraic, so is~(\ref{HiggsPullback}).

We first assume that $C$ is a smooth projective curve.
\begin{Theorem}\label{CriteriaModularEmbedding}
Let $C$ be a smooth projective curve of genus at least two, let $S=\Sh^0_{\ad\tilde{\varGamma }}(G,X)$ with $\tilde{\varGamma }\in\tilde{\Sigma }(G)$, and let $f\colon C\to S$ be a morphism of algebraic varieties. Fix some embedding $\varrho\in\mathscr{P}$. With the notations of this section, the following are equivalent.
\begin{enumerate}
\item $\tif$ is a modular embedding with respect to~$\varrho$.
\item The homomorphism $\varphi_{\varrho }\colon\pi_1(C^{\an })\to\SL_2(\bbr )$ is conjugate to a theta characteristic of $\varDelta =\pi_1(C^{\an })$.
\item The component map $\tif_{\varrho }\colon\bbh\to\bbh$ is a biholomorphism.
\item The Higgs field $f^{\ast }\vartheta_{\varrho }$ is maximal.
\item $f^{\ast }\mathcal{L}_{\varrho }$ is a theta characteristic on~$C$.
\item The degree of the line bundle $f^{\ast }\mathcal{L}_{\varrho }$ is equal to $-\frac 12\chi (C)$.
\end{enumerate}
If these conditions are satisfied, then $\varphi_{\varrho }\in\Theta^{\mathrm{uni}}(\varDelta )$ in (ii) and $f^{\ast }\mathcal{L}_{\varrho }\in\Theta (C)$ in (v) correspond to each other under the bijection $\Theta^{\mathrm{uni}}(\varDelta )\leftrightarrow\Theta (C)$ from section~\ref{Subs:Theta}.
\end{Theorem}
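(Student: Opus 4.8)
The plan is to reduce the whole statement to Proposition~\ref{CharacterisationMaximalHiggs} applied to the real variation of elliptic curve type $\bbv := f^{\ast}\bbw_{\varrho}$ on $C^{\an}$. Since $C$ is projective of genus at least two we have $\Cbar = C$, the boundary is empty, $\pi_1(C^{\an})$ is a torsion-free cocompact Fuchsian group without parabolic elements, so $\Theta^{\mathrm{uni}}(\pi_1(C^{\an})) = \Theta(\pi_1(C^{\an}))$, the adjective ``logarithmic'' carries no content, $\omega_{\Cbar}(\log D) = \omega_C$, and the hypothesis on unipotent local monodromy in Proposition~\ref{CharacterisationMaximalHiggs} is vacuously satisfied.

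The heart of the argument is to match the abstract invariants of $\bbv$ appearing in Proposition~\ref{CharacterisationMaximalHiggs} with the concrete data attached to $f$. The Fuchsian-type variation $\bbw_{\varrho ,\tilde{\varGamma }}$ on $S$ has monodromy representation $\pi_1(S^{\an})\cong\tilde{\varGamma }\to\SL_2(\bbr )$ given by the $\varrho$-th factor of~(\ref{ProductDecompositionGR}), cf.~(\ref{GammaMapsToMatrix}); composing with $f_{\ast }=\tilde{\varphi }$ shows that the monodromy of $\bbv_{\bbr }$ is exactly the homomorphism $\tilde{\varphi }_{\varrho }$ occurring in~(ii). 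Under the classical identification $X\simeq\bbh^{\mathscr{P}}$ the period map of $\bbw_{\varrho }$ is the coordinate projection $\operatorname{pr}_{\varrho }$, so the period map of $\bbv$ on $\bbh=\widetilde{C^{\an }}$ is $\operatorname{pr}_{\varrho }\circ\tif =\tif_{\varrho }$, the map in~(iii). Finally, Lemma~\ref{AutomorphicBundleIsHiggsBundleForVHS} supplies a canonical isomorphism $\mathrm{F}^1\mathcal{W}_{\varrho }\cong\mathcal{L}_{\varrho }^{\an }$ on $S$ intertwining the Higgs field of $\bbw_{\varrho }$ with $\pm\vartheta_{\varrho }^{\an }$; pulling back along $f$ and using that the elementary construction of a Higgs field from a variation of elliptic curve type commutes with pullback, the Hodge line bundle $\mathrm{F}^1\mathcal{V}$ of $\bbv$ becomes, after GAGA on the projective curve $C$, the algebraic line bundle $f^{\ast }\mathcal{L}_{\varrho }$, and its Higgs field becomes $\pm$ the map~(\ref{HiggsPullback}).

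Granting these identifications, the equivalence of (ii)--(vi) is just the equivalence of conditions (i)--(v) of Proposition~\ref{CharacterisationMaximalHiggs} for $\bbv$: its (i), (ii), (iv), (v) translate verbatim into (ii), (iii), (v), (vi) here, while its (iii) translates into (iv) here because a map of coherent sheaves is an isomorphism if and only if its negative is and, by GAGA, if and only if its analytification is. To incorporate (i), observe that the triple $(\varDelta ,\varphi ,\tif )=(\pi_1(C^{\an }),\tilde{\varphi },\tif )$ has the shape considered in Section~\ref{Sect:ME} -- $\varDelta$ is a torsion-free lattice in $\PGL_2(\bbr )^+$, the image of $\varphi$ lies in the arithmetic group $\varGamma$, and $\tif$ is $\varphi$-equivariant -- so Proposition and Definition~\ref{Def:ME} yields directly that $\tif$ is a modular embedding with respect to $\varrho$ if and only if $\tif_{\varrho }$ is a biholomorphism, i.e.\ (i)$\Leftrightarrow$(iii). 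Lastly, the compatibility of the theta characteristics in (ii) and (v) with the bijection $\Theta^{\mathrm{uni}}(\varDelta )\leftrightarrow\Theta (C)$ of Section~\ref{Subs:Theta} is precisely the final assertion of Proposition~\ref{CharacterisationMaximalHiggs}, read through the identifications above (the theta characteristic of $\pi_1(C^{\an})$ occurring there is $\tilde{\varphi}_{\varrho}$, and that of $C$ is the class of $f^{\ast }\mathcal{L}_{\varrho}$).

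No step is genuinely deep; the theorem is a transport of structure. The only point that demands care is the second paragraph -- verifying that the monodromy and period map of $f^{\ast }\bbw_{\varrho }$ are literally $\tilde{\varphi }_{\varrho }$ and $\tif_{\varrho }$, which requires keeping the classical identifications $X\simeq\bbh^{\mathscr{P}}$ and $G(\bbr )\twoheadrightarrow\SL_2(\bbr )^{\mathscr{P}}$ consistent with the factor-$\varrho$ normalisation fixed after Proposition and Definition~\ref{Def:ME}, and checking that the Higgs-field construction is functorial under pullback so that Lemma~\ref{AutomorphicBundleIsHiggsBundleForVHS}, which is stated on $S$, can be pulled back to $C$.
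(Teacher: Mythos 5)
Your proposal is correct and follows exactly the route the paper takes: its entire proof of Theorem~\ref{CriteriaModularEmbedding} is the one-line statement that it ``follows directly from Proposition~\ref{CharacterisationMaximalHiggs}'', and your argument is precisely the detailed expansion of that reduction (identifying monodromy, period map and Higgs data of $f^{\ast}\bbw_{\varrho}$ with $\tilde{\varphi}_{\varrho}$, $\tif_{\varrho}$ and $(f^{\ast}\mathcal{L}_{\varrho}, f^{\ast}\vartheta_{\varrho})$ via Lemma~\ref{AutomorphicBundleIsHiggsBundleForVHS}, and handling (i)$\Leftrightarrow$(iii) through Proposition and Definition~\ref{Def:ME}). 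The bookkeeping you flag as the only delicate points is exactly what the paper leaves implicit, and you carry it out correctly.
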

\begin{proof}
This follows directly from Proposition~\ref{CharacterisationMaximalHiggs}.
\end{proof}
Finally we note that the line bundle $\mathcal{M}_{\varrho }=\mathcal{L}_{\varrho }^{\otimes 2}$ is geometrically much easier to describe than $\mathcal{L}_{\varrho }$: it is a summand in the decomposition
$$\Omega_S^1=\bigoplus_{\varrho\in\mathscr{P}}\mathcal{M}_{\varrho }$$
coming from the product decomposition $X\cong\bbh^{\mathscr{P}}$. Furthermore, it depends only on the group $\varGamma\in\Sigma (G)$ instead of a choice of $\tilde{\varGamma }\in\tilde{\Sigma }(G)$.
\begin{Corollary}\label{CriteriaMEWithM}
Let $C$ be a smooth projective curve of genus at least two, let $S=\Sh^0_{\varGamma }(G,X)$ with $\varGamma \in\Sigma (G)$ and let $f\colon C\to S$ be a morphism of algebraic varieties. Fix some embedding $\varrho\in\mathscr{P}$. Then $f$ is covered by a modular embedding with respect to $\varrho$ if and only if $f^{\ast }\!\mathcal{M}_{\varrho }$ is isomorphic to~$\omega_C$ (as an algebraic, holomorphic or topological line bundle; these statements are all equivalent under these conditions).
\end{Corollary}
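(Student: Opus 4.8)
The plan is to derive this from Theorem~\ref{CriteriaModularEmbedding}, which already carries all the Hodge-theoretic content (through Proposition~\ref{CharacterisationMaximalHiggs}); what remains is to translate the criterion from the automorphic line bundle $\mathcal{L}_{\varrho }$ to its square $\mathcal{M}_{\varrho }$ and to disentangle the three notions of isomorphism. Throughout, $C$ is projective of genus $g\ge 2$, hence hyperbolic with $\chi (C)=2-2g$ and $\deg\omega_C=-\chi (C)$. The key auxiliary object is the canonical map of algebraic line bundles $\psi\colon f^{\ast }\mathcal{M}_{\varrho }\hookrightarrow f^{\ast }\Omega_S^1\to\Omega_C^1=\omega_C$ obtained by restricting the codifferential of $f$ to the $\varrho$-summand; on universal covers, under $X\simeq\bbh^{\mathscr{P}}$ and the fact that the pullback of $\mathcal{M}_{\varrho }$ to $X$ is $\operatorname{pr}_{\varrho }^{\ast }\Omega^1_{\bbh }$, this map is simply multiplication by the derivative $\tif_{\varrho }'$.

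First I would treat the implication from being covered by a modular embedding. If $f$ is covered by a modular embedding with respect to $\varrho$, then by Proposition and Definition~\ref{Def:ME} the component $\tif_{\varrho }\colon\bbh\to\bbh$ is a biholomorphism, so $\tif_{\varrho }'$ is nowhere vanishing, hence $\psi$ is an isomorphism of \emph{algebraic} line bundles; a fortiori $f^{\ast }\mathcal{M}_{\varrho }$ is isomorphic to $\omega_C$ also holomorphically and topologically. In particular all three forms of the isomorphism, as well as the numerical condition $\deg f^{\ast }\mathcal{M}_{\varrho }=-\chi (C)$, follow from being covered by a modular embedding.

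For the converse it is enough to start from the weakest hypothesis, $\deg f^{\ast }\mathcal{M}_{\varrho }=-\chi (C)$. The one technical point is that Theorem~\ref{CriteriaModularEmbedding} is stated at a level of the form $\ad\tilde{\varGamma }$, whereas a general $\varGamma \in\Sigma (G)$ need not be the adjoint of a congruence subgroup of $G(\bbq )$. To handle this I would pass to a finite \'etale cover: pick a neat congruence subgroup $\tilde{\varGamma }\in\tilde{\Sigma }(G)$ with $\ad\tilde{\varGamma }\subseteq\varGamma$, set $\varDelta '=f_{\ast }^{-1}(\ad\tilde{\varGamma })$, a finite-index subgroup of $\varDelta =\pi_1(C^{\an })$, let $C'=\varDelta '\backslash\bbh\to C$ be the associated connected finite \'etale cover and $f'\colon C'\to S'=\Sh^0_{\ad\tilde{\varGamma }}(G,X)$ the induced map. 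Pulling back along $C'\to C$ multiplies degrees by the positive integer $[\varDelta :\varDelta ']$, sends $\omega_C$ to $\omega_{C'}$ and $f^{\ast }\mathcal{M}_{\varrho }$ to $(f')^{\ast }\mathcal{M}_{\varrho }$ (as $\mathcal{M}_{\varrho }$ is the descent to either quotient of the same $\gad (\bbr )^+$-equivariant $\varrho$-summand of the cotangent bundle of $X$), while the universal-cover lift of $f'$, and in particular its $\varrho$-component, is the same $\tif_{\varrho }$ as for $f$; hence $f$ is covered by a modular embedding with respect to $\varrho$ iff $f'$ is, and the hypothesis gives $\deg (f')^{\ast }\mathcal{M}_{\varrho }=-\chi (C')$. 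Now on $S'$ one has $\mathcal{M}_{\varrho }\cong\mathcal{L}_{\varrho }^{\otimes 2}$, so $\deg (f')^{\ast }\mathcal{L}_{\varrho }=-\tfrac 12\chi (C')$, and Theorem~\ref{CriteriaModularEmbedding} (the implication (vi)$\Rightarrow$(i)) shows $f'$, and therefore $f$, is covered by a modular embedding with respect to $\varrho$.

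Putting the two directions together yields the asserted equivalence, and also the equivalence of the algebraic, holomorphic and topological versions of $f^{\ast }\mathcal{M}_{\varrho }\cong\omega_C$ (each implies the topological one, which implies being covered by a modular embedding, which in turn implies the algebraic one). I do not expect a genuine obstacle here: the substance lies entirely in the invoked Theorem~\ref{CriteriaModularEmbedding}, and the only things needing care are the computation of $\psi$ on the universal cover and the routine compatibilities under $C'\to C$ in the reduction to a level of the form $\ad\tilde{\varGamma }$.
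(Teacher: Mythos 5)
Your proof is correct and follows essentially the same route as the paper: for the converse you pass, exactly as the paper does, to a finite \'etale cover $C'\to C$ at a level of the form $\ad\tilde{\varGamma }'$ and invoke Theorem~\ref{CriteriaModularEmbedding} there (the paper uses its condition (v), you use the equivalent (vi)). The only divergence is in the forward direction: the paper obtains the isomorphism on such a cover (chosen normal in $\varGamma$) and descends it by $\varGamma /\varGamma '$-invariance, whereas you exhibit it directly on $C$ as the codifferential map $f^{\ast }\mathcal{M}_{\varrho }\to\omega_C$, which lifts to multiplication by $\tif_{\varrho }'$ and is nowhere vanishing once $\tif_{\varrho }$ is a biholomorphism --- a harmless, slightly cleaner variant that avoids the descent step.
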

\begin{proof}
Assume first that $f^{\ast }\!\mathcal{M}_{\varrho ,\varGamma }\simeq\omega_C$, and let $\varGamma '\subseteq\varGamma$ be a subgroup with $\varGamma '=\ad\tilde{\varGamma }'$ for some $\tilde{\varGamma }'\in\tilde{\Sigma }(G)$. Let $C'\to C$ be the finite unramified covering corresponding to the subgroup $\varphi^{-1}(\varGamma ')\subseteq\varDelta$, and let $f'\colon C'\to S'=\Sh_{\varGamma '}^0(G,X)$ be the obvious lift of~$f$. Then by pullback $f'^{\ast }\!\mathcal{M}_{\varrho,\varGamma '}\simeq\omega_{C'}$, hence $f'^{\ast }\mathcal{L}_{\varrho ,\tilde{\varGamma }'}$ is a theta characteristic on~$C'$. By Theorem~\ref{CriteriaModularEmbedding} $f'$ is covered by a modular embedding, hence so is~$f$.

Now assume that $f$ is covered by a modular embedding, and choose $\varGamma '$ as before, but in addition normal in~$\varGamma$. Then we obtain an isomorphism of line bundles $f'^{\ast }\!\mathcal{M}_{\varrho ,\varGamma }\to\omega_{C'}$ which is by construction invariant under the finite group $\varGamma /\varGamma '$ acting on $C'$, hence it descends to an isomorphism $f^{\ast }\!\mathcal{M}_{\varrho ,\varGamma }\to\omega_C$.
\end{proof}

We now turn to the affine case.
\begin{Proposition}\label{Prop:AffineImpliesHB}
If $f\colon C\to S$ is covered by a modular embedding, where $C$ is a smooth affine complex curve, then $B\simeq\mathrm{M}_2(F)$, hence $S$ is a Hilbert--Blumenthal variety.
\end{Proposition}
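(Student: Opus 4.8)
The plan is to reduce the statement to showing that the quaternion algebra $B$ is \emph{not} a division algebra: by the dichotomy recalled above (which rests on the Borel--Harish-Chandra compactness criterion), this forces $B\simeq\mathrm{M}_2(F)$, and then $S$ is by definition a Hilbert--Blumenthal variety. So I fix a modular embedding $(\varphi,\tif)$ covering $f$, with respect to some $\varrho\in\mathscr{P}$, assume $C$ is affine, and aim to produce a nonzero nilpotent element of $B$.

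First I would extract a cusp and transport it through the embedding. Since $f$ is covered by a modular embedding, $\varDelta:=\pi_1(C^{\an})$ is a lattice in $\PGL_2(\bbr)^+$, and since $C$ is affine its smooth completion $\Cbar$ strictly contains $C$, so $\varDelta$ is non-cocompact; a small loop around a point of $\Cbar\smallsetminus C$ then gives a nontrivial parabolic element $\delta\in\varDelta$. By Proposition--Definition~\ref{Def:ME} the component map $\varphi_{\varrho}\colon\varDelta\to\PGL_2(\bbr)^+$ is conjugate to the identity inclusion, hence injective, so $\varphi_{\varrho}(\delta)$ is again a nontrivial parabolic element of $\PGL_2(\bbr)^+$.

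The core of the argument is then an elementary reduced-trace and reduced-norm computation. I would lift $\varphi(\delta)\in\gad(\bbq)^+\subseteq\gad(\bbq)=B^{\times}/F^{\times}$ to an element $b\in B^{\times}$. The $\varrho$-th factor projection in $(\ref{ProductDecompositionGRad})$, induced by the fixed isomorphism $B\otimes_{F,\varrho}\bbr\cong\mathrm{M}_2(\bbr)$, sends $[b]$ to $\varphi_{\varrho}(\delta)$, so the matrix representing $b\otimes 1$ is a scalar multiple of a nontrivial unipotent matrix; such a matrix has a single double eigenvalue, whence its trace and determinant satisfy $(\operatorname{tr})^2=4\det$. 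Since the chosen isomorphism carries the reduced trace and reduced norm of $B$ to the matrix trace and determinant, this reads $\varrho\bigl(\operatorname{trd}(b)^2-4\operatorname{nrd}(b)\bigr)=0$, and because $\varrho\colon F\hookrightarrow\bbr$ is injective we get $\operatorname{trd}(b)^2=4\operatorname{nrd}(b)$ already in $F$. Feeding this into the reduced characteristic polynomial $b^2-\operatorname{trd}(b)\,b+\operatorname{nrd}(b)=0$ shows that $n:=b-\tfrac12\operatorname{trd}(b)$ satisfies $n^2=\tfrac14\bigl(\operatorname{trd}(b)^2-4\operatorname{nrd}(b)\bigr)=0$. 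Finally $n\neq 0$: otherwise $b\in F$, so $\varphi(\delta)=1$ and hence $\varphi_{\varrho}(\delta)=1$, contradicting that $\varphi_{\varrho}(\delta)$ is a nontrivial parabolic element. Thus $B$ contains a nonzero nilpotent element and is not a division algebra, which finishes the proof.

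This is a short argument and I do not expect a genuine obstacle; the only points that require a little care are the fact that a non-cocompact lattice uniformising an affine curve does contain parabolic elements (clear from the cusps contributed by $\Cbar\smallsetminus C$) and the routine verification that the chosen splitting of $B$ at $\varrho$ intertwines reduced trace and reduced norm with matrix trace and determinant. A softer variant would avoid writing down $b$ explicitly: $\varphi$ is injective, so $\varDelta$ is a non-cocompact lattice contained in the arithmetic group $\varGamma\subset\gad(\bbr)^+$, and if $B$ were a division algebra $\varGamma$ would be cocompact and hence contain no nontrivial unipotents by Godement's criterion; but turning ``the $\varrho$-component of $\varphi(\delta)$ is unipotent'' into a statement about $\varphi(\delta)$ itself in $\gad(\bbr)$ (whose compact factors must then be killed) is no simpler than the direct computation above, so that is the route I would actually carry out.
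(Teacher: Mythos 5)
Your proof is correct and follows essentially the same route as the paper: use the affineness of $C$ to produce a parabolic element, transport it through the modular embedding, and exhibit a nonzero nilpotent element of $B$ (the paper's proof phrases this as: a nontrivial unipotent $\gamma\in G(\bbq)=B^1$ gives the nilpotent $\gamma-1$, so $B$ is not a division algebra). Your version merely spells out, via the reduced trace--norm computation on a lift $b\in B^{\times}$ of $\varphi(\delta)\in\gad(\bbq)^+$, the step the paper leaves implicit, which is a harmless (indeed slightly more careful) elaboration rather than a different argument.
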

Note that the reverse implication does not hold: there exist projective Shimura curves modularly embedded in non-projective Hilbert--Blumenthal varieties, cf.\ Example~\ref{ExamplesME}.(iii).
\begin{proof}
The preimage of the fundamental group of $C^{\an }$ in $\SL_2(\bbr )$ contains nontrivial unipotent elements. Hence so does $G(\bbq )$; but if $1\neq\gamma\in G(\bbq )=B^1$ is unipotent, then $\gamma -1\in B\smallsetminus\{ 0\}$ is nilpotent, hence $B$ cannot be a division algebra.
\end{proof}
We assume for the rest of this subsection that $B=\mathrm{M}_2(F)$. Recall that then $\bbw_{\varrho }$ can be described as a direct summand, defined by an eigenvalue equation, of the variation $\mathrm{R}^1p_{\ast }\bbr$, where $p\colon A\to S$ is the universal family, and the associated vector bundle with its Hodge filtration and its connection can be constructed in a purely algebraic way from the universal family. 
\begin{Lemma}
After possibly replacing $\varGamma$ by a suitable subgroup in $\Sigma (G)$ and correspondingly $C$ by a finite unramified covering we may assume that $f^{\ast }\bbw_{\varrho }$ has unipotent monodromy around the cusps of~$C$.
\end{Lemma}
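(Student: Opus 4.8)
\emph{Proof proposal.} The idea is to combine the monodromy theorem for variations of Hodge structure with the neatness that is built into the definition of~$\tilde{\Sigma }(G)$.

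First I would record the geometric picture. Since $B=\mathrm{M}_2(F)$, the variety $S$ carries its universal family $p\colon A\to S$, and by~(\ref{WRhoAsSummandInRpast}) the variation $\bbw_{\varrho }$ is the $\varrho$-eigenspace of the $\bbq$-linear $F$-action on $\mathrm{R}^1p_{\ast }\bbr$; thus it is obtained from the $\bbq$-variation $\mathrm{R}^1p_{\ast }\bbq$ (with its $F$-module structure) by an eigenvalue condition, and in particular underlies an $F$-invariant summand of $\mathrm{R}^1p_{\ast }\bbq\otimes_{\bbq }\bbr$. Pulling back along $f$ replaces $p$ by the family $p_C\colon A_C=C\times_SA\to C$ and identifies $f^{\ast }\bbw_{\varrho }$ with the corresponding summand of $\mathrm{R}^1(p_C)_{\ast }\bbq\otimes_{\bbq }\bbr$. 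As $C$ is affine, $D=\Cbar\smallsetminus C$ is finite, and around each point of $D$ the local monodromy of the $\bbq$-variation $\mathrm{R}^1(p_C)_{\ast }\bbq$ is quasi-unipotent by the monodromy theorem \cite[Theorem~(6.1)]{MR0382272}; restricting to the monodromy-invariant subspace $f^{\ast }\bbw_{\varrho }$ preserves this property.

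Next come the two replacements. Since $\varGamma\in\Sigma (G)$ contains the image of a congruence subgroup of $G(\bbq )$ and every sufficiently small congruence subgroup is neat, we may pick $\tilde{\varGamma }'\in\tilde{\Sigma }(G)$ with $\ad\tilde{\varGamma }'\subseteq\varGamma$, and put $S'=\Sh^0_{\ad\tilde{\varGamma }'}(G,X)$. Let $C'\to C$ be the finite unramified covering associated with $(f_{\ast })^{-1}(\ad\tilde{\varGamma }')\subseteq\pi_1(C^{\an })$, so that $f$ lifts to a morphism $f'\colon C'\to S'$; these $\ad\tilde{\varGamma }'$ and $C'$ are the ``suitable subgroup'' and ``finite unramified covering'' in the statement. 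Being neat, $\tilde{\varGamma }'$ is torsion-free and does not contain $-1$, so $\tilde{\varGamma }'\to\ad\tilde{\varGamma }'$ is an isomorphism; hence the local monodromy of $f'^{\ast }\bbw_{\varrho ,\tilde{\varGamma }'}$ around a cusp $d'$ of $C'$ is of the form $\varrho (g)\in\SL_2(\bbr )$ for some $g\in\tilde{\varGamma }'\subset G(\bbq )=\SL_2(F)$, where $\varrho$ is also used to denote a fixed extension of $\varrho$ to~$\widebar{F}$.

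Finally I would conclude. The operator $\varrho (g)$ is quasi-unipotent, being a restriction of the quasi-unipotent local monodromy of $\mathrm{R}^1(p_{C'})_{\ast }\bbq$ around~$d'$, so its eigenvalues $\varrho (\lambda ),\varrho (\lambda )^{-1}$ are roots of unity, where $\lambda ,\lambda^{-1}\in\widebar{F}$ are the eigenvalues of $g$ viewed in $\SL_2(F)$. But $\varrho (\lambda )$ occurs among the eigenvalues of the image of $g$ under a faithful $\bbq$-rational representation of $G=\Res_{F/\bbq }\SL_2$ (the $\varrho$-component of $\Res_{F/\bbq }$ applied to the standard representation), and the neatness of $\tilde{\varGamma }'$ says precisely that these eigenvalues generate a torsion-free subgroup of $\bbc^{\times }$; a torsion-free group contains no nontrivial root of unity, so $\varrho (\lambda )=1$ and $\varrho (g)$ is unipotent. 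Since $C'$ has only finitely many cusps, $f'^{\ast }\bbw_{\varrho }$ has unipotent monodromy around all of them. The only non-formal ingredient here is the monodromy theorem, and the point that needs care is merely that it applies: this uses $B=\mathrm{M}_2(F)$, which puts us in the Hilbert--Blumenthal setting so that $f^{\ast }\bbw_{\varrho }$ genuinely underlies a summand of a $\bbq$-variation of geometric origin. If one only knew $\varGamma$ to be neat as a subgroup of $\gad (\bbq )^+$ one would additionally have to rule out local monodromies equal to minus a unipotent matrix; passing to $\tilde{\Sigma }(G)$, i.e.\ lifting the monodromy to $\SL_2$, is exactly what rules this out.
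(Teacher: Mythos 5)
Your proposal is correct and follows the paper's own route: realise $f^{\ast}\bbw_{\varrho}$ as a monodromy-invariant summand of $\mathrm{R}^1p_{C,\ast}\bbr$ for the pulled-back family of abelian varieties and invoke the local monodromy theorem to get quasi-unipotence (the paper cites Deligne's \'Equations diff\'erentielles where you cite Schmid, which is immaterial). The paper's proof in fact stops at quasi-unipotence and leaves the passage to genuine unipotence implicit; your explicit argument via neatness of $\tilde{\varGamma}'\in\tilde{\Sigma}(G)$ — eigenvalues generating a torsion-free subgroup of $\bbc^{\times}$, with the lift to $\SL_2$ ruling out monodromies equal to $-1$ times a unipotent matrix — correctly supplies exactly that omitted step.
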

\begin{proof}
Let $p_C\colon A_C\to C$ be the pullback of the universal family $p\colon A\to S$ along~$f$. Then by (\ref{WRhoAsSummandInRpast}) the variation $f^{\ast }\bbw_{\varrho }$ is a direct summand of the variation $\mathrm{R}^1p_{C,\ast }\bbr$ on~$C$, and such variations have quasi-unipotent monodromy around the cusps by \cite[III.\S 2, Th\'eor\`eme~2.3]{MR0417174}.
\end{proof}
\begin{Theorem}\label{GaussManinCriterion}
Let $S$ and $C=\Cbar\smallsetminus D$ as before, and let $f\colon C\to S$ be a regular map corresponding to an algebraic family of abelian varieties $p_C\colon A_C\to C$ with an inclusion $F\hookrightarrow\End (A_C/C)\otimes\bbq$. Denote $\varrho$-isotypical components of $F$-modules by subscripts~$\varrho$. Then, after possibly passing to finite unramified coverings, the Gauss--Manin connection $\nabla^{\mathrm{GM}}_{\varrho }$ on the vector bundle $(\mathcal{H}^1_{\mathrm{dR}}(A_C/C))_{\varrho }$ on $C$ has regular singularities with unipotent monodromy around the cusps of~$C$. Let $\eta_{\mathrm{ext}}\colon\mathcal{E}_{\mathrm{ext}}\to\mathcal{E}_{\mathrm{ext}}^{-1}\otimes\Omega_{\Cbar }(\log D)$ be the logarithmic Higgs bundle corresponding to the Deligne extension of $(\mathcal{H}^1_{\mathrm{dR}}(A_C/C))_{\varrho }$.

Then the following are equivalent:
\begin{enumerate}
\item $f$ is covered by a modular embedding for $\varrho$.
\item $\eta_{\mathrm{ext}}$ is an isomorphism.
\item $\mathcal{E}_{\mathrm{ext}}$ is a logarithmic theta characteristic on~$C$.
\item The degree of $\mathcal{E}_{\mathrm{ext}}$ is $-\frac 12\chi (C)$.
\end{enumerate}
\end{Theorem}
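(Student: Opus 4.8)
The plan is to deduce the theorem from Proposition~\ref{CharacterisationMaximalHiggs}, applied to the real variation of elliptic curve type $f^{\ast}\bbw_{\varrho}$ on $C^{\an}$, exactly as Theorem~\ref{CriteriaModularEmbedding} handles the projective case. Once one knows that the logarithmic Higgs bundle $(\mathcal{E}_{\mathrm{ext}},\eta_{\mathrm{ext}})$ built from the de~Rham cohomology of $A_C/C$ is the same as the one that Proposition~\ref{CharacterisationMaximalHiggs} associates with $f^{\ast}\bbw_{\varrho}$, conditions (ii), (iii) and (iv) of the present statement become literally the conditions (iii), (iv) and (v) of that proposition, and they are in turn equivalent to its condition (ii), that the period map of $f^{\ast}\bbw_{\varrho}$ be a biholomorphism; condition (i) will then be matched with this last one through Proposition and Definition~\ref{Def:ME}. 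Supplying this identification of Higgs bundles is the only step that takes any genuine work.

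First I would arrange, by the Lemma preceding the statement, that $f^{\ast}\bbw_{\varrho}$ has unipotent monodromy around the cusps of $C$. As it is a direct summand of $\mathrm{R}^1p_{C,\ast}\bbr$ by~(\ref{WRhoAsSummandInRpast}), the Gauss--Manin connection $\nabla^{\mathrm{GM}}_{\varrho}$ automatically has regular singularities, so the preliminary assertions of the theorem hold after a finite unramified covering of~$C$. One must check that this passage changes none of the four conditions. For (i) this is the descent argument already used in the proof of Corollary~\ref{CriteriaMEWithM}. For (ii)--(iv) it is enough to note that an unramified covering $\pi\colon\Cbar'\to\Cbar$ satisfies $\pi^{\ast}\omega_{\Cbar}(\log D)\cong\omega_{\Cbar'}(\log D')$ and $\chi(C')=(\deg\pi)\chi(C)$, so that pulling back along $\pi$ sends logarithmic theta characteristics to logarithmic theta characteristics, multiplies degrees by $\deg\pi$, and sends isomorphisms of line bundles to isomorphisms; the converse implications follow by descending along a Galois covering.

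Now for the identification. By~(\ref{WRhoAsSummandInRpast}) the $F$-decomposition of $\mathrm{R}^1p_{C,\ast}\bbr$ has $\varrho$-eigenspace $f^{\ast}\bbw_{\varrho}$, and the holomorphic vector bundle underlying its complexification, together with its Hodge filtration and flat connection, is $(\mathcal{H}^1_{\mathrm{dR}}(A_C/C))_{\varrho}$ with its Hodge filtration and $\nabla^{\mathrm{GM}}_{\varrho}$. Applying Remark~\ref{AlgebraicStructuresOnHodgeSubbundles} with $\widebar{X}=\Cbar$ (which is projective) and the smooth proper morphism $p_C\colon A_C\to C$, the algebraic structure on $\mathcal{H}^1_{\mathrm{dR}}(A_C/C)$ and its Hodge subbundle coming from the relative de~Rham complex coincides, after forming the Deligne canonical extensions to $\Cbar$ and algebraising by `GAGA', with the one coming from the variation. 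Combining this with the Proposition that identifies $\mathcal{L}_{\varrho}$ with the $\varrho$-eigenspace of the Hodge bundle of the universal family (and its Higgs field with that of $\nabla^{\mathrm{GM}}_{\varrho}$, up to sign), one gets that $\mathcal{E}_{\mathrm{ext}}$ is $\mathrm{F}^1$ of the canonical extension of $f^{\ast}\bbw_{\varrho}$, that $\mathcal{E}_{\mathrm{ext}}|_C\cong f^{\ast}\mathcal{L}_{\varrho}$, and that $\eta_{\mathrm{ext}}$ agrees, up to a harmless sign, with the logarithmic Higgs field $\vartheta\colon\mathcal{L}\to\mathcal{L}^{-1}\otimes\omega_{\Cbar}(\log D)$ attached by Proposition~\ref{CharacterisationMaximalHiggs} to $\bbv=f^{\ast}\bbw_{\varrho}$. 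Reconciling these two a~priori different algebraic structures on the Hodge bundle of $A_C/C$ is the main obstacle; everything that follows is formal.

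Finally, Proposition~\ref{CharacterisationMaximalHiggs} applied to $\bbv=f^{\ast}\bbw_{\varrho}$ gives at once the equivalence of (ii), (iii) and (iv) and their equivalence with the statement that the period map $\widetilde{C^{\an}}\to\bbh$ of $f^{\ast}\bbw_{\varrho}$ is a biholomorphism. Under the uniformisation $\widetilde{C^{\an}}=\bbh$ the pullback of $f^{\ast}\bbw_{\varrho}$ is $\tif_{\varrho}^{\ast}\bbw$, where $\tif_{\varrho}\colon\bbh\to\bbh$ is the $\varrho$-component of the lift $\tif\colon\bbh\to X\simeq\bbh^{\mathscr{P}}$ of $f$, and the bijection of Section~\ref{Sect:HodgeStructures} identifies the period map of $\tif_{\varrho}^{\ast}\bbw$ with $\tif_{\varrho}$ itself. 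By Proposition and Definition~\ref{Def:ME}, $\tif_{\varrho}$ is a biholomorphism precisely when $\tif$ is a modular embedding for $\varDelta=\pi_1(C^{\an})$ with respect to $\varrho$, i.e.\ precisely when $f$ is covered by a modular embedding with respect to $\varrho$ — using once more that this property is unaffected by the finite unramified covering we passed to. This establishes (i)~$\Leftrightarrow$~(ii) and closes the cycle.
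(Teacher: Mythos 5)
Your proposal is correct and follows the same route as the paper: identify the logarithmic Higgs bundle $(\mathcal{E}_{\mathrm{ext}},\eta_{\mathrm{ext}})$ with the one attached to the variation $f^{\ast}\bbw_{\varrho}$, observe that $\tif_{\varrho}$ is its period map, and conclude via Proposition~\ref{CharacterisationMaximalHiggs} together with Proposition and Definition~\ref{Def:ME}. You merely spell out details (the GAGA/de Rham comparison and the invariance of the four conditions under finite unramified coverings) that the paper's proof leaves implicit.
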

\begin{proof}
Since the logarithmic Higgs field $\eta_{\mathrm{ext}}$ is isomorphic to that associated with the variation $f^{\ast }\bbw_{\varrho }$, conditions (ii) and (iii) can be reformulated as statements about that variation. The $\varrho$-th component $\tif_{\varrho }\colon\bbh\to\bbh$ of the universal covering map $\tif$ is the period map for this variation. The theorem follows from Proposition~\ref{CharacterisationMaximalHiggs}.
\end{proof}

\begin{Remark}\label{Rem:AOneVsATwo}
It is desirable to compare Theorems \ref{CriteriaModularEmbedding} and~\ref{GaussManinCriterion}. Note that in the setup of Theorem~\ref{GaussManinCriterion} the line bundle $\mathcal{E}=\mathcal{E}_{\mathrm{ext}}|_C=p_{\ast }(\Omega_{A_C/C}^1)_{\varrho }$ is isomorphic as an algebraic line bundle to $f^{\ast }\mathcal{L}_{\varrho }$, with the Higgs fields $\eta =\eta_{\mathrm{ext}}|_C$ and $f^{\ast }\vartheta_{\varrho }$ corresponding to each other. Once we know that $f$ is covered by a modular embedding, there is a unique extension of $(f^{\ast }\mathcal{L}_{\varrho },f^{\ast }\vartheta_{\varrho })$ to a logarithmic Higgs bundle on $\Cbar$ with maximal Higgs field, but for an arbitrary regular map $f$ the situation is more complicated and there is no obvious way to choose an extension of $(f^{\ast }\mathcal{L}_{\varrho },f^{\ast }\vartheta_{\varrho })$ to a logarithmic Higgs bundle on~$\Cbar$. It seems that we need the subtler information encoded in the relative de Rham cohomology to decide how to extend the Higgs bundle to the cusps. Alternatively one might study a suitable extension $\widebar{f}\colon\Cbar\to\widebar{S}$ for some compactification $\widebar{S}$ of $S$, corresponding to a family of semi-abelian varieties over $\Cbar$, and try to obtain $\eta_{\mathrm{ext}}$ as a Kodaira--Spencer map for~$\widebar{f}$. This, however, seems technically much more difficult and is not necessary for our purposes.
\end{Remark}

\subsection{Adelic theory}

Let $\varDelta\subset\PGL_2(\bbr )^+$ be a semi-arithmetic lattice, let $\varGamma\subset\gad(\bbq )^+$ be an arithmetic subgroup containing the image of a congruence subgroup of $G(\bbq )$, and let $\varphi\colon\varDelta\to\varGamma$ and $\tif\colon\bbh\to X$ constitute a modular embedding with respect to $\varrho\colon F\to K\subset\bbr$.
\begin{Definition}
Let $\varDelta '\subseteq\varDelta$ be a subgroup. Then $\varDelta '$ is a \emph{congruence subgroup} of $\varDelta$ with respect to $\varphi$ if it contains a subgroup of the form $\varphi^{-1}(\varGamma ')$ with $\varGamma '\in\Sigma (G)$.
\end{Definition}
Recall that $\mathscr{O}=\mathfrak{o}_K\langle\varDelta\rangle$ is an order in a quaternion algebra~$A/K$. For every ideal $\mathfrak{n}$ of $K$ we define the \emph{principal congruence subgroup} $\varDelta (\mathfrak{n})$ as
$$\varDelta (\mathfrak{n})=\{\delta\in\varDelta\mid \delta -1\in\mathfrak{n}\mathscr{O}\} .$$
This is clearly a congruence subgroup.
\begin{Proposition}
Assume that $\varphi$ satisfies~(\ref{MELiesInImageOfGQ}). Then a subgroup of $\varDelta$ is a congruence subgroup with respect to $\varphi$ if and only if it contains a subgroup of the form $\varDelta (\mathfrak{n})$.
\end{Proposition}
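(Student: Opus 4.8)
The plan is to make explicit the identification --- already present, in essence, in the proof of Proposition~\ref{InclusionOfTraceFields} --- between the quaternion algebra $A/K$ generated by $\varDelta$ and the algebra $B/F$ defining $G$, and then to translate principal congruence subgroups back and forth across it.

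First I would recollect the relevant set-up. By the normalisation adopted in Section~\ref{Sect:ME}, the homomorphism $\varphi_{\varrho}\colon\varDelta\to\PGL_2(\bbr)^+$ is the identity inclusion, so in particular $\varphi$ is injective. Under the extra hypothesis~(\ref{MELiesInImageOfGQ}), the argument in the proof of Proposition~\ref{InclusionOfTraceFields} then shows that the preimage $\varDelta_B\subset B^1=G(\bbq)$ of $\varphi(\varDelta)$ under $\operatorname{ad}\colon B^1\to\gad(\bbq)^+$ is carried isomorphically onto $\tilde\varDelta$ by the $\varrho$-th coordinate embedding $B^1\hookrightarrow\SL_2(\bbr)$ of~(\ref{ProductDecompositionGR}), and that this embedding extends to a ring isomorphism $\bar\varrho\colon B\xrightarrow{\sim}A$ restricting to $\varrho$ on the common centre $F\xrightarrow{\sim}K$; in particular $\mathscr{O}_B:=\bar\varrho^{-1}(\mathscr{O})$ is an order in $B$. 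I would also record two standard facts about $B^1=G(\bbq)$: the principal congruence subgroups $B^1(\mathfrak m):=\{b\in\mathscr{O}_B^1\mid b-1\in\mathfrak m\mathscr{O}_B\}$, for $\mathfrak m$ a nonzero ideal of $\mathfrak{o}_F$, are cofinal among all congruence subgroups (so the chosen order is immaterial); and for $\mathfrak m$ small enough $B^1(\mathfrak m)$ is neat by \cite[17.4]{MR0244260}, so that $\operatorname{ad}B^1(\mathfrak m)$ is a neat arithmetic subgroup of $\gad(\bbq)^+$ containing the image of a congruence subgroup of $G(\bbq)$, hence lies in $\Sigma(G)$.

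The technical core is the identity $\varphi^{-1}\bigl(\operatorname{ad}B^1(\mathfrak m)\bigr)=\varDelta(\varrho(\mathfrak m))$ for every nonzero ideal $\mathfrak m$ of $\mathfrak{o}_F$. To prove it one unwinds definitions: since $\varphi(\varDelta)\subseteq\operatorname{ad}(\varDelta_B)$, an element $\delta\in\varDelta$ lies in the left-hand side exactly when one of the two lifts in $\varDelta_B\subset B^1$ of $\varphi(\delta)$ belongs to $B^1(\mathfrak m)$; transporting through $\bar\varrho$, and using $\bar\varrho(\mathfrak m\mathscr{O}_B)=\varrho(\mathfrak m)\mathscr{O}$ together with $\bar\varrho(\varDelta_B)=\tilde\varDelta$, this becomes the condition that some $\SL_2(\bbr)$-lift $\tilde\delta$ of $\delta$ satisfies $\tilde\delta-1\in\varrho(\mathfrak m)\mathscr{O}$, i.e.\ $\delta\in\varDelta(\varrho(\mathfrak m))$. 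I expect the main obstacle to be exactly this bookkeeping: keeping the various $\{\pm1\}$-ambiguities straight in passing between $\SL_2(\bbr)$ and $\PGL_2(\bbr)^+$ and between $B^1$ and $\gad(\bbq)^+$. It is ultimately harmless, because the definition of $\varDelta(\mathfrak n)$ refers only to \emph{some} lift, and because both sides of the equivalence to be proved are stable under passing to finite-index overgroups, so one may always replace an ideal by a smaller one; but this is where care is needed.

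Granting the identity, both implications are formal, and they also contain the remark, made informally after its definition, that each $\varDelta(\mathfrak n)$ is a congruence subgroup. If $\varDelta'\supseteq\varDelta(\mathfrak n)$, write $\mathfrak n=\varrho(\mathfrak m_0)$, choose an ideal $\mathfrak m\subseteq\mathfrak m_0$ of $\mathfrak{o}_F$ with $\varGamma':=\operatorname{ad}B^1(\mathfrak m)\in\Sigma(G)$, and note $\varphi^{-1}(\varGamma')=\varDelta(\varrho(\mathfrak m))\subseteq\varDelta(\mathfrak n)\subseteq\varDelta'$, so $\varDelta'$ is a congruence subgroup with respect to $\varphi$. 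Conversely, if $\varDelta'\supseteq\varphi^{-1}(\varGamma')$ with $\varGamma'\in\Sigma(G)$, then by definition of $\Sigma(G)$ the group $\varGamma'$ contains the image of some congruence subgroup of $G(\bbq)=B^1$, hence contains $\operatorname{ad}B^1(\mathfrak m)$ for some $\mathfrak m$ by cofinality, and therefore $\varDelta'\supseteq\varphi^{-1}(\operatorname{ad}B^1(\mathfrak m))=\varDelta(\varrho(\mathfrak m))$, a principal congruence subgroup.
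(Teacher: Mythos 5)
Your argument is correct and is exactly the routine verification the paper leaves unwritten (its proof consists of the single word \emph{Easy}): the identity $\varphi^{-1}\bigl(\operatorname{ad}B^1(\mathfrak m)\bigr)=\varDelta(\varrho(\mathfrak m))$, combined with the cofinality of principal congruence subgroups in $G(\bbq )$ and neatness of sufficiently small ones so that $\operatorname{ad}B^1(\mathfrak m)\in\Sigma (G)$, gives both implications as you state. Your reading of $\varDelta (\mathfrak n)$ via \emph{some} lift to $\SL_2(\bbr )$, and the careful handling of the $\{\pm 1\}$-ambiguities through the semilinear isomorphism $B\to A$, is the intended interpretation, consistent with the explicit definitions of $\tilde{\varDelta }(\mathfrak a)$ and $\varDelta (\mathfrak a)$ given later in Section~5.
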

\begin{proof}
Easy.
\end{proof}
Therefore for modular embeddings satisfying (\ref{MELiesInImageOfGQ}) the notion of congruence subgroup is intrinsic to $\varDelta$. In general it can be slightly more complicated. In any case the congruence subgroups define the neighbourhood basis of the identity for a topological group structure on $\varDelta$, and the completion $\hat{\varDelta }$, called the \emph{congruence completion}, is canonically isomorphic to
$$\hat{\varDelta }\cong\varprojlim_{\varDelta '}\varDelta /\varDelta ',$$
$\varDelta '$ running over all normal congruence subgroups of~$\varDelta$. This is a profinite group. Similarly, the congruence completion of $\varGamma$ is a profinite group $\hat{\varGamma }$ which is canonically isomorphic to the topological closure of $\varGamma$ in $\mathcal{A}(G)$, an open compact subgroup of the latter.
The homomorphism $\varphi\colon\varDelta\to\varGamma$ has a unique continuous extension $\hat{\varphi }\colon\hat{\varDelta } \to \mathcal{A}(G)$. It is injective, and it induces a homeomorphism from $\hat{\varDelta }$ to a closed subgroup of $\hat{\varGamma }$.
Under the additional condition that the trace field of $\varDelta$ is the same as the invariant trace field this closed subgroup is in fact open by superstrong approximation, cf.\ \cite[Proposition~4.4]{KucharczykActa}.

For a subgroup $\varGamma\in\Sigma (G)$ let $C_{\varphi^{-1}(\varGamma )}$ be the complex curve with $C_{\varphi^{-1}(\varGamma )}(\bbc )=\varphi^{-1}(\varGamma )\backslash\bbh$. Then $\tif$ descends to a morphism of algebraic varieties
$$f_{\varGamma }\colon C_{\varphi^{-1}(\Gamma )}\to \Sh_{\varGamma }^0(G,X).$$
Passing to limits in the category of $\bbc$-schemes, we obtain $\bbc$-schemes of infinite type
$$\hat{C}\defined\varprojlim_{\varGamma }C_{\varphi^{-1}(\varGamma )}\qquad\text{and}\qquad\Sh^0(G,X)\cong\varprojlim_{\varGamma }\Sh_{\varGamma }^0(G,X).$$
By construction, $\hat{\varDelta }$ acts continuously by $\bbc$-scheme isomorphisms on $\hat{C}$, and $\mathcal{A}(G)$ acts similarly on~$\Sh^0(G,X)$ (as we have already seen).

We summarise these considerations:
\begin{Proposition}
A modular embedding as above gives rise to a morphism of schemes of infinite type over~$\bbc$,
$$\hat{f}\colon\hat{C}\to\Sh^0(G,X),$$
which is equivariant for the embedding $\hat{\varphi }\colon\hat{\varDelta }\to\mathcal{A}(G)$.

For a congruence subgroup $\varDelta '\subseteq\varDelta$ with closure $\hat{\varDelta }'$ in $\hat{\varDelta }$ the corresponding curve $C'$ can be retrieved as $C'\cong\hat{\varDelta }'\backslash\hat{C}$, and similarly for $\Sh^0(G,X)$.
\end{Proposition}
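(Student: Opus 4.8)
\emph{Strategy.} The plan is to obtain $\hat f$ by passing to the inverse limit of the finite-level morphisms $f_\varGamma$, to prove equivariance by checking it on the dense image of $\varDelta$ in $\hat\varDelta$, and to deduce the descent statement from the fact that the quotient of a pro-scheme by an open subgroup of a profinite group acting on it is computed level by level. So the first step is to verify that the morphisms $f_\varGamma\colon C_{\varphi^{-1}(\varGamma)}\to\Sh_\varGamma^0(G,X)$, for $\varGamma$ ranging over $\Sigma(G)$, form a compatible system over the tower: for $\varGamma'\subseteq\varGamma$ in $\Sigma(G)$ the square with horizontal arrows $f_{\varGamma'}$, $f_\varGamma$ and vertical arrows the canonical forgetful coverings commutes, because on universal covers each of the four morphisms is induced by the single $\varphi$-equivariant map $\tif\colon\bbh\to X$, and by Borel's theorem \cite[Theorem~3.10]{MR0338456} the algebraic curves and the morphisms are determined by their analytifications. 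The universal property of the limit in $\bbc$-schemes then yields $\hat f\colon\hat C\to\Sh^0(G,X)$.

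\emph{Equivariance.} One has to show $\hat f\circ\hat\delta=\hat\varphi(\hat\delta)\circ\hat f$ for all $\hat\delta\in\hat\varDelta$, where $\hat\delta$ and $\hat\varphi(\hat\delta)$ also denote the automorphisms they induce on $\hat C$ and on $\Sh^0(G,X)$. Both sides are morphisms $\hat C\to\Sh^0(G,X)$; composing with the projection onto each finite level $\Sh_\varGamma^0(G,X)$, and using that $\hat C$ is a cofiltered limit of the curves $C_{\varphi^{-1}(\varGamma')}$ along finite transition maps, each side factors through a common stage $C_{\varphi^{-1}(\varGamma')}$, so one is left comparing two morphisms of reduced finite-type $\bbc$-varieties, which may be compared on $\bbc$-points. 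On $\bbc$-points the required identity holds as soon as $\hat\delta$ lies in the image of $\varDelta$: it is then precisely the equivariance relation $\tif(\delta z)=\varphi(\delta)\tif(z)$ passed to the quotients $\varphi^{-1}(\varGamma')\backslash\bbh$ and $\varGamma'\backslash X$. Since the image of $\varDelta$ is dense in $\hat\varDelta$, since $\hat\varphi$ is continuous, and since the actions of $\hat\varDelta$ on $\hat C$ and of $\mathcal{A}(G)$ on $\Sh^0(G,X)$ are continuous, the identity extends to all of $\hat\varDelta$.

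\emph{Descent to finite levels.} Given a congruence subgroup $\varDelta'\subseteq\varDelta$, choose a normal congruence subgroup $\varDelta''\subseteq\varDelta'$ of $\varDelta$. Then $\widehat{\varDelta''}=\ker(\hat\varDelta\to\varDelta/\varDelta'')$ is an open subgroup of $\hat\varDelta$, and restricting the defining limit $\hat C=\varprojlim C_{\varphi^{-1}(\varGamma)}$ to the cofinal subsystem of normal congruence subgroups contained in $\varDelta''$ shows that the projection $\hat C\to C_{\varDelta''}$ identifies $C_{\varDelta''}$ with $\widehat{\varDelta''}\backslash\hat C$. As $\widehat{\varDelta''}$ is normal in $\hat\varDelta'$ with quotient the image of $\varDelta'$ in $\varDelta/\varDelta''$, namely $\varDelta'/\varDelta''$, forming the quotient in two steps gives $\hat\varDelta'\backslash\hat C\cong(\varDelta'/\varDelta'')\backslash C_{\varDelta''}\cong C_{\varDelta'}=C'$. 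For $\Sh^0(G,X)$ the analogous statement is the identity $\Sh_\varGamma^0(G,X)\cong\hat\varGamma\backslash\Sh^0(G,X)$, with $\hat\varGamma$ the closure of $\varGamma$ in $\mathcal{A}(G)$, which was already noted above.

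\emph{Main obstacle.} I expect the only genuine point of care in an otherwise bookkeeping argument to be keeping the scheme-theoretic side consistent: that all the cofiltered limits in play exist as $\bbc$-schemes carrying the stated continuous actions by $\bbc$-scheme automorphisms (which is arranged in the construction above via Borel's algebraicity theorem), and the standard limit argument that a morphism from such a pro-scheme to a finite-type scheme factors through a finite level. Once these inputs are granted, the equivariance and the descent identities reduce formally to their finite-level counterparts.
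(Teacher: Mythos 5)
Your proposal is correct and follows essentially the same route as the paper, which states this Proposition as a summary of the immediately preceding construction (the finite-level maps $f_{\varGamma}$, passage to the limit, and the continuous actions of $\hat{\varDelta}$ and $\mathcal{A}(G)$) without spelling out further details. Your filling-in — compatibility of the $f_{\varGamma}$ via the single equivariant lift $\tif$, equivariance checked on the dense image of $\varDelta$ and extended by continuity, and the quotient statement reduced to a normal congruence subgroup and finite-level quotients — is exactly the intended bookkeeping.
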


\section{Galois conjugation}

\noindent In this section we study how abstract field automorphisms of $\bbc$ act on Shimura varieties, automorphic Higgs bundles and modular embeddings. In the end we draw some consequences for dessins d'enfants.

The action of field automorphisms of $\bbc$ on Shimura varieties and automorphic bundles can be described in elementary terms for Shimura varieties of type PEL, but for more general Shimura varieties such as the ones we are considering a more elaborate approach is needed. This approach constructs, for each connected Shimura datum $(G,X)$, each field automorphism $\tau\in\Aut\bbc$ and each special point $x\in X$, a new Shimura datum $(\taux G,\taux X)$ which is obtained from the original one by twisting the groups involved by certain torsors that can be described by Galois cohomology. We give a review of this theory adapted to our purposes in sections \ref{Subs:STG} to~\ref{Subs:MilneShihTheoryAutom}.

\subsection{The Serre, Taniyama and Galois groups}\label{Subs:STG}

Fundamental to the general theory of Galois conjugation on Shimura varieties is a short exact sequence
\begin{equation}\label{SequenceSTG}
1\to\mathfrak{S}\to\mathfrak{T}\to\mathfrak{G}\to 1
\end{equation}
of pro-algebraic groups (i.e., projective limits of linear algebraic groups) over~$\bbq$, equipped with certain splitting data over $\bbc$ and $\bbq_p$. We now review the construction of these three groups, the sequence (\ref{SequenceSTG}) and various auxiliary structures.

\subsubsection*{Elementary construction of the groups}

The group $\mathfrak{G}$ is simply the \emph{absolute Galois group} $\Gal (\qbar /\bbq )$, interpreted as a constant profinite group scheme over~$\bbq$. That is,
$$\mathfrak{G}=\varprojlim_L\left( \Gal (L/\bbq )\right)_{\bbq },$$
where $L$ runs through all finite Galois extensions of $\bbq$ contained in $\qbar$, and $G_{\bbq }$ denotes the constant group scheme  $\Spec\bbq^G$ for any finite group~$G$.

The group $\mathfrak{S}$ is the \emph{Serre group} (also known as Serre torus); it can also be constructed as a projective limit
\begin{equation}\label{SerreGroupAsLimitOfTori}
\mathfrak{S}=\varprojlim_L\mathfrak{S}^L.
\end{equation}
The facts mentioned below are proved in the opening section of~\cite{MilneShih1982}.

The group $\mathfrak{S}^L$ is the quotient of the torus $\bbt^L=\Res_{L/\bbq }\bbg_m$ by the Zariski closure of any sufficiently small finite index subgroup of $\mathfrak{o}_L^{\times }\subset L^{\times }=\bbt^L(\bbq )$. The character group of $\bbt^L$ is the free $\bbz$-module spanned by $[\varrho ]$, where $\varrho$ runs through all field embeddings $\varrho\colon L\to\qbar$, with Galois action $\sigma\cdot [\varrho ]=[\sigma\circ\varrho ]$. Hence the character group of $\mathfrak{S}^L$ is a Galois-stable subgroup of this group; an easy argument involving Minkowski's Geometry of Numbers shows that $\mathrm{X}^{\ast }(\mathfrak{S}^L)$ consists precisely of the characters $\chi\in \mathrm{X}^{\ast }(\bbt^L)$ with
\begin{equation}\label{SerreCondition}
(\sigma -1)(\iota +1)\chi =(\iota +1)(\sigma -1)\chi =0\text{ for all }\sigma\in\Gal (\qbar /\bbq ).
\end{equation}
This condition can be reformulated as follows:
\begin{equation}\label{varSerreCondition}
\mathrm{X}^{\ast }(\mathfrak{S}^L)=\{\sum_{\varrho\colon L\to\qbar }a_{\varrho }[\varrho ]\mid -a_{\varrho }-a_{\iota\varrho }\text{ independent of }\varrho \} .
\end{equation}
There is a natural homomorphism $h^L\colon\bbs\to\mathfrak{S}^L_{\bbr }$ given on character groups by
\begin{equation*}
\mathrm{X}^{\ast }(h^L)\colon \mathrm{X}^{\ast }(\mathfrak{S}^L)\to \mathrm{X}^{\ast }(\bbs )=\bigoplus_{\varrho '\colon\bbr\to\bbc }\bbz [\varrho '],\qquad \sum_{\varrho }a_{\varrho }[\varrho ]\mapsto a_{\mathrm{id }}[\mathrm{id}]+a_{\iota }[\iota ].
\end{equation*}
Composition with the `weight' and `Hodge filtration' homomorphisms yields homomorphisms $w^L\colon\bbg_m\to\mathfrak{S}^L$ (nota bene: defined over $\bbq$) and $\mu^L\colon\bbg_{m,\bbc }\to\mathfrak{S}^L_{\bbc }$ given on characters by
\begin{equation*}
\mathrm{X}^{\ast }(w^L)\colon \mathrm{X}^{\ast }(\mathfrak{S}^L)\to \bbz ,\qquad \sum_{\varrho }a_{\varrho }[\varrho ]\mapsto -a_{\id }-a_{\iota }=\text{ the constant from }(\ref{varSerreCondition}),
\end{equation*}
and
\begin{equation*}
\mathrm{X}^{\ast }(\mu^L)\colon \mathrm{X}^{\ast }(\mathfrak{S}^L)\to \bbz ,\qquad \sum_{\varrho }a_{\varrho }[\varrho ]\mapsto a_{\mathrm{id}}.
\end{equation*}
\begin{Definition}
Let $T$ be a $\bbq$-torus which splits over $L$, and let $\mu\colon\bbg_{m,\bbc }\to T_{\bbc }$ be a cocharacter. Then $\mu$ satisfies the \emph{Serre condition} if
\begin{equation*}
(\sigma -1)(\iota +1)\mu =(\iota +1)(\sigma -1)\mu =0\qquad\text{for all }\sigma\in\Gal (\qbar /\bbq )
\end{equation*}
(compare (\ref{SerreCondition})).
\end{Definition}
A simple calculation shows that the pair $(\mathfrak{S}^L,\mu^L)$ is universal for tori split over $L$ with a cocharacter satisfying the Serre condition:
\begin{Lemma}\label{UniversalPropertySerreTorusFiniteStep}
Let $T$ be a $\bbq$-torus which splits over $L$, and let $\mu\in \mathrm{X}_{\ast }(T)$ be a cocharacter satisfying the Serre condition. Then there exists a unique morphism of tori $\alpha\colon\mathfrak{S}^L\to T$ (defined over $\bbq$) such that $\mu =\alpha_{\bbc }\circ\mu^L$.
\end{Lemma}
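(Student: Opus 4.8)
The plan is to pass from tori to their (co)character lattices and check that $(\mathfrak{S}^L,\mu^L)$ is the universal object there. Recall that $T\mapsto\mathrm{X}^{\ast}(T)$ is an anti-equivalence between $\bbq$-tori split over $L$ and finitely generated free $\bbz$-modules with a $\Gal(L/\bbq)$-action, under which a cocharacter $\bbg_{m,\bbc}\to T_{\bbc}$ (equivalently $\bbg_{m,\qbar}\to T_{\qbar}$, since $T$ splits over $L\subseteq\qbar$) is just an element of $\mathrm{X}_{\ast}(T)$, a $\bbq$-morphism $\alpha\colon\mathfrak{S}^L\to T$ corresponds to a $\Gal(L/\bbq)$-equivariant homomorphism $\mathrm{X}_{\ast}(\alpha)\colon\mathrm{X}_{\ast}(\mathfrak{S}^L)\to\mathrm{X}_{\ast}(T)$, and the condition $\mu=\alpha_{\bbc}\circ\mu^L$ reads $\mathrm{X}_{\ast}(\alpha)(\mu^L)=\mu$. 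Identify $\Hom(L,\qbar)$ with $\Gal(L/\bbq)$ via the distinguished inclusion $L\subseteq\qbar$, so that $\mathrm{X}^{\ast}(\bbt^L)=\bigoplus_{\varrho}\bbz[\varrho]$ is the regular representation and $\mathrm{X}^{\ast}(\mathfrak{S}^L)\subseteq\mathrm{X}^{\ast}(\bbt^L)$ is the sublattice of characters obeying~(\ref{SerreCondition}).

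\emph{Uniqueness.} I would first record that $\mathrm{X}_{\ast}(\mathfrak{S}^L)$ is generated over $\bbz[\Gal(L/\bbq)]$ by $\mu^L$. Indeed the defining condition~(\ref{varSerreCondition}) is visibly stable under division of the integers $-a_{\varrho}-a_{\iota\varrho}$ by a common factor, so $\mathrm{X}^{\ast}(\mathfrak{S}^L)$ is a \emph{saturated} sublattice of $\mathrm{X}^{\ast}(\bbt^L)$; dualising the inclusion therefore gives a surjective $\Gal(L/\bbq)$-equivariant restriction map $\mathrm{X}_{\ast}(\bbt^L)\twoheadrightarrow\mathrm{X}_{\ast}(\mathfrak{S}^L)$. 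Since $\mathrm{X}_{\ast}(\bbt^L)$ is $\bbz[\Gal(L/\bbq)]$-free of rank one on the functional dual to the distinguished embedding, and that functional restricts to $\mu^L$ by the displayed formula for $\mathrm{X}^{\ast}(\mu^L)$, its image $\mu^L$ generates the quotient. Hence any $\bbz[\Gal(L/\bbq)]$-linear map out of $\mathrm{X}_{\ast}(\mathfrak{S}^L)$ is determined by the image of $\mu^L$, which gives uniqueness of $\alpha$ at once.

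\emph{Existence.} Given $\mu$ satisfying the Serre condition, define $\theta\colon\mathrm{X}^{\ast}(T)\to\mathrm{X}^{\ast}(\bbt^L)$ by $\theta(\chi)=\sum_{\varrho}\langle\chi,\varrho\mu\rangle[\varrho]$, where $\varrho\mu$ is the action on $\mu$ of the group element $\varrho\in\Gal(L/\bbq)$ and $\langle\cdot,\cdot\rangle$ is the perfect pairing $\mathrm{X}^{\ast}(T)\times\mathrm{X}_{\ast}(T)\to\bbz$; a short reindexing shows $\theta$ is $\Gal(L/\bbq)$-equivariant. The crucial point is that $\theta$ takes values in $\mathrm{X}^{\ast}(\mathfrak{S}^L)$. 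Writing $\nu=\sum_{\varrho}[\varrho]$, the relation $(\iota+1)(\sigma-1)\mu=0$ collapses each $(1+\iota)(\varrho\mu)$ to $(1+\iota)\mu$ and so gives $(\iota+1)\theta(\psi)=\langle\psi,(1+\iota)\mu\rangle\,\nu$ for every $\psi\in\mathrm{X}^{\ast}(T)$; as $\nu$ is $\Gal(L/\bbq)$-invariant this yields $(\sigma-1)(\iota+1)\theta(\chi)=0$, and combining it with equivariance one gets $(\iota+1)(\sigma-1)\theta(\chi)=(\iota+1)\theta((\sigma-1)\chi)=\langle(\sigma-1)\chi,(1+\iota)\mu\rangle\,\nu=\langle\chi,(\sigma^{-1}-1)(\iota+1)\mu\rangle\,\nu=0$ using now the relation $(\sigma-1)(\iota+1)\mu=0$. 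Hence $\theta(\chi)$ obeys~(\ref{SerreCondition}), so $\theta$ factors through $\mathrm{X}^{\ast}(\mathfrak{S}^L)$ and defines $\mathrm{X}^{\ast}(\alpha)$ for a $\bbq$-morphism $\alpha\colon\mathfrak{S}^L\to T$. Pairing $\theta(\chi)$ with the cocharacter $\mu^L$ reads off the coefficient of the distinguished embedding, namely $\langle\chi,\mu\rangle$, so $\langle\chi,\mathrm{X}_{\ast}(\alpha)(\mu^L)\rangle=\langle\chi,\mu\rangle$ for all $\chi$, and therefore $\mathrm{X}_{\ast}(\alpha)(\mu^L)=\mu$, i.e.\ $\mu=\alpha_{\bbc}\circ\mu^L$. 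For completeness one should also note that $\mu^L$ itself satisfies the Serre condition, which follows by pairing against~(\ref{SerreCondition}); this makes the statement non-vacuous.

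The only step demanding real care is the equivariant bookkeeping: keeping straight the action of $\Gal(L/\bbq)$ on the set of embeddings (post-composition), the position of complex conjugation $\iota$ inside $\Gal(L/\bbq)$, and the interplay between the two presentations~(\ref{SerreCondition}) and~(\ref{varSerreCondition}) of the Serre condition. Once this dictionary is fixed, both halves of the argument reduce to the elementary lattice computations indicated above.
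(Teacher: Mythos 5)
Your proof is correct, and it is exactly the ``simple calculation'' the paper alludes to: the paper itself gives no argument for this lemma, deferring to the opening section of Milne--Shih, and your reduction to (co)character lattices --- saturation of $\mathrm{X}^{\ast }(\mathfrak{S}^L)$ in $\mathrm{X}^{\ast }(\bbt^L)$ giving surjectivity of $\mathrm{X}_{\ast }(\bbt^L)\to\mathrm{X}_{\ast }(\mathfrak{S}^L)$ and hence that $\mu^L$ generates $\mathrm{X}_{\ast }(\mathfrak{S}^L)$ over the group ring (uniqueness), together with the explicit $\theta (\chi )=\sum_{\varrho }\langle\chi ,\varrho\mu\rangle [\varrho ]$ whose image satisfies the Serre condition by the collapse $(1+\iota )(\varrho\mu )=(1+\iota )\mu$ (existence) --- is the standard argument and checks out line by line. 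The only caveat is that identifying $\Hom (L,\qbar )$ with $\Gal (L/\bbq )$ (and the freeness of $\mathrm{X}_{\ast }(\bbt^L)$ over $\bbz [\Gal (L/\bbq )]$) presumes $L/\bbq$ Galois, which is the case in Langlands's construction but is not assumed in the statement; for general $L$ one runs the same computation with embeddings $\varrho\colon L\to\qbar$ and arbitrary extensions of them to $\Gal (\qbar /\bbq )$ (well defined on $\mathrm{X}_{\ast }(T)$ since $T$ splits over $L$), replacing freeness by transitivity of the Galois action on the dual basis, and nothing else changes.
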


For each finite extension $L\subseteq L'\subset\qbar$ the norm map $\mathrm{N}_{L'/L}$ restricts to a homomorphism $\mathfrak{S}^{L'}\to\mathfrak{S}^L$, and the limit (\ref{SerreGroupAsLimitOfTori}) is to be understood with respect to these homomorphisms. Furthermore we can restrict ourselves to CM-fields $L$ because $\mathfrak{S}^L\cong\mathfrak{S}^{L'}$ if $L$ is the union of all $\mathrm{CM}$- or totally real subfields of~$L'$. Hence $\mathfrak{S}$ is the pro-torus whose character group $\mathrm{X}^{\ast }(\mathfrak{S})$ consists of all functions $n\colon\Gal (\qbar /\bbq )\to\bbz$ which factor through some $\Gal (L/\bbq )$ with $L/\bbq$ Galois and CM and for which $-n(\sigma )-n(\iota\sigma )$ is independent of $\sigma$. Since the homomorphisms $h^L$, $w^L$ and $\mu^L$ are compatible with this limiting process, we obtain homomorphisms
\begin{equation*}
h_{\can }=\lim_Lh^L\colon\bbs\to\mathfrak{S}_{\bbr },\qquad w_{\can }\colon\bbg_m\to\mathfrak{S},\qquad \mu_{\can }\colon\bbg_{m,\bbc }\to\mathfrak{S}_{\bbc }.
\end{equation*}
From Lemma~\ref{UniversalPropertySerreTorusFiniteStep} we deduce that $(\mathfrak{S},\mu_{\can })$ is universal among pairs $(T,\mu )$ where $T$ is a torus over $\bbq$ and $\mu\in \mathrm{X}_{\ast }(T)$ satisfies the Serre condition.

The group $\mathfrak{T}$ is called the \emph{Taniyama group}. It can also be constructed as a limit object. In \cite[section~5]{MR546619} Langlands constructs, for each finite Galois extension $L/\bbq$ contained in $\qbar$, an exact sequence of groups
\begin{equation}\label{STGonlevelL}
1\to\mathfrak{S}^L\to\mathfrak{T}^L\to\mathfrak{G}^L\to 1,
\end{equation}
where $\mathfrak{G}^L$ is the constant group $\bbq$-scheme defined by $\Gal (L^{\mathrm{ab}}/\bbq )$, and then shows that (\ref{STGonlevelL}) admits a limit as $L$ varies.

\subsubsection*{Tannakian theory and motives}

The sequence (\ref{SequenceSTG}) also admits another description in terms of Tannakian categories. This is much less amenable to explicit calculations, but it is conceptually very elegant and explains the appearance of this sequence when field automorphisms are applied to Shimura varieties. For the formalism of Tannakian categories and fibre functors see \cite{DeligneMilne1982}. The theory to be summarised below is beautifully explained in~\cite{MR1044823}.

Let $\mathbf{CM}_{\bbc }$ be the Tannakian subcategory of the category of pure Hodge $\bbq$-structures generated by all $\Hup_1(A^{\an },\bbq )$, where $A$ is an abelian variety with complex multiplication over $\bbc$, and the Tate Hodge structure $\bbq (1)$. Then the Tannakian fundamental group of $\mathbf{CM}_{\bbc }$ for the fibre functor given by the underlying $\bbq$-vector spaces is canonically isomorphic to $\mathfrak{S}$, see \cite{Deligne1982} or \cite[Prop.~4.5]{MR1044823}.

We let $\mathbf{Art}_{\bbq }$ be the category of Artin motives, i.e.\ of finite-dimensional continuous $\bbq$-representations of $\Gal (\qbar /\bbq )$. Then the Tannakian fundamental group of $\mathbf{Art}_{\bbq }$ with respect to the tautological fibre functor is canonically isomorphic to~$\mathfrak{G}$.

Finally we let $\mathbf{CM}_{\bbq }$ be the Tannakian category of motives over $\bbq$ generated by the Tate motive $\bbq (1)=\Hup_2(\bbp^1)$, the motives $\Hup_1(A)$ for all abelian varieties $A$ over $\bbq$ with potential complex multiplication, and the Artin motives (which can be viewed as subobjects of the motives $\Hup_0(V)$, where $V$ runs over all finite $\bbq$-varieties). All these motives can be constructed unconditionally using the theory of absolute Hodge cycles developed in~\cite{DeligneHodgeCycles}. Then we let $\mathfrak{T}$ be the fundamental group of $\mathbf{CM}_{\bbq }$ for the fibre functor given by the Betti realisation.

The inclusion $\mathbf{Art}_{\bbq }\hookrightarrow\mathbf{CM }_{\bbq }$ and the base change $\mathbf{CM}_{\bbq }\to\mathbf{CM}_{\bbc }$ give rise to the homomorphisms in~(\ref{SequenceSTG}).

\subsubsection*{Splittings}

For every finite prime $\ell$ we can identify $\mathfrak{G}(\bbq_{\ell })$ with $\Gal (\qbar /\bbq )$, hence the $\bbq_{\ell }$-valued points of (\ref{SequenceSTG}) form an exact sequence
\begin{equation*}
1\to\mathfrak{S}(\bbq_{\ell })\to\mathfrak{T}(\bbq_{\ell })\to\Gal (\qbar /\bbq );
\end{equation*}
the last map is in fact also surjective, and there is even a canonical splitting
$$\operatorname{sp}_{\ell }\colon\Gal (\qbar /\bbq )\to\mathfrak{T}(\bbq_{\ell }).$$
It can easily be defined using the Tannakian formalism:

Let $\Hup_B$ be the canonical `Betti' fibre functor $\mathbf{CM}_{\bbq }\to\mathbf{Vect}_{\bbq }$. Then $\Hup_{\ell }=\Hup_B\otimes_{\bbq }\bbq_{\ell }$ is endowed with a Galois representation (arising from the Galois action on Tate modules of abelian varieties). Hence each $\tau\in\Gal (\qbar /\bbq )$ gives rise to an automorphism of $\Hup_{\ell }$, hence to an element of $\mathfrak{T}(\bbq_{\ell })$, and this is by definition $\operatorname{sp}_{\ell }(\tau )$. Note that $\operatorname{sp}_{\ell }$ does not come from a homomorphism of pro-algebraic groups $\mathfrak{G}_{\bbq_{\ell }}\to\mathfrak{T}_{\bbq_{\ell }}$, see~\cite[p.~261]{Deligne1982}.

The product $\operatorname{sp}\colon\Gal (\qbar /\bbq )\to\mathfrak{T}(\bba^f)$ of the $\operatorname{sp}_{\ell }$ is likewise a splitting of the sequence of groups of $\bba^f$-valued points obtained from~(\ref{SequenceSTG}). 

\subsection{Torsors and Galois cohomology}\label{Subs:Torsors}

We recall some basic constructions in Galois cohomology, in both explicit and cohomological descriptions. For details and proofs, see the classical source \cite[Chap.~III, \S 1]{MR0180551} or, in greater detail, the recent~\cite{Emsalem2015}.

Recall that for an algebraic group $G$ over a field $k$, a \emph{(right) torsor} or \emph{principal homogeneous space} is a $k$-variety $P$ together with a right action of $G$ on $P$ such that over an algebraic closure $\widebar{k}$ there exists an isomorphism $P_{\widebar{k}}\to G_{\widebar{k}}$ equivariant for $G_{\widebar{k}}$, which operates on itself by right translations. A torsor is \emph{trivial} if such an isomorphism already exists over $k$; this is the case if and only if $P(k)$ is nonempty.

Let $X$ be a $k$-variety on which $G$ acts from the left. Then the \emph{contracted product} or \emph{twist of $X$ by $P$}, denoted by $P\times^GX$, is the scheme-theoretic quotient $(P\times_kX)/G$ (provided that it exists), where $g\in G(\widebar{k})$ acts by $(p,x)\mapsto (pg^{-1},gx)$. Two special cases shall be of interest:

\begin{enumerate}
\item If $f\colon G\to H$ is a homomorphism of algebraic groups, then $P\times^GH$ (where $H$ is acted upon by $G$ by left multiplication via $f$) is in a natural way a right $H$-torsor, denoted by $f_{\ast }P$.
\item If $X$ has some extra structure respected by the $G$-action (such as a group structure, or a $k$-algebra structure on a variety isomorphic to $\bba^n_k$), then $P\times^GX$ has the same structure.
\end{enumerate}
The $G$-torsors up to isomorphism are classified by the non-abelian Galois cohomology set $\Hup^1(k,G)$. If $G$ is the automorphism group of some `variety with additional structures' $X$, then the same cohomology set classifies forms of $X$, i.e.\ varieties with additional structures $X'$ with $X'_{\widebar{k}}\simeq X_{\widebar{k}}$. This correspondence is realised by sending a torsor $P$ to $P\times^GX$, and pushforward of torsors as in (i) corresponds to pushforward of cohomology classes $f_{\ast }\colon\Hup^1(k,G)\to\Hup^1(k,H)$.

Returning to the exact sequence (\ref{SequenceSTG}), the Tannakian formalism gives rise to a simple construction of a $\mathfrak{T}$-torsor, the \emph{period torsor} $\mathfrak{P}=\underline{\mathrm{Isom }}^{\otimes }(\Hup_{\mathrm{B}},\Hup_{\mathrm{dR}})$. Here $\Hup_{\mathrm{B}}$ is the fibre functor on $\mathbf{CM}_{\bbq }$ defined by the Betti realisation, and $\Hup_{\mathrm{dR}}$ is defined by the de Rham realisation. The comparison isomorphism $\Hup_{\mathrm{B}}\otimes_{\bbq }\bbc \cong\Hup_{\mathrm{dR}}\otimes_{\bbq }\bbc$ defines an element $c\in\mathfrak{P}(\bbc )$, and for each $\tau\in\Aut\bbc$ we let $z_{\infty }(\tau )\in\mathfrak{T}(\bbc )$ be the unique element with
$$c\cdot z_{\infty }(\tau ) =\tau (c)\in\mathfrak{P}(\bbc ).$$
The map $z_{\infty }\colon\Aut\bbc\to\mathfrak{T}(\bbc )$ is a \emph{crossed} homomorphism, and it does not factor through $\Gal (\qbar /\bbq )$.

For $\tau\in\Aut\bbc$ the preimage of $\{\tau |\qbar\}\subset\Gal (\qbar /\bbq )=\mathfrak{G}(\bbq )$ under $\mathfrak{T}\to\mathfrak{G}$ is a $\bbq$-subscheme ${^{\tau }\mathfrak{S}}\subset\mathfrak{T}$ which is by construction a right $\mathfrak{S}$-torsor. In fact, the point $z_{\infty }(\tau )$ lies in ${^{\tau }\mathfrak{S}(\bbc )}\subset\mathfrak{T}(\bbc )$ (see \cite[Prop.~1.1]{MR931206}) and therefore provides a distinguished trivialisation of ${^{\tau }\mathfrak{S}}$ over~$\bbc$. For further reference let $s_{\tau }\in\Hup^1(\bbq ,\mathfrak{S})$ be the cohomology class corresponding to~$^{\tau }\mathfrak{S}$.

\subsection{Galois conjugates of Shimura varieties}\label{Subs:MilneShihTheory}

We recall the action of $\tau\in\Aut\bbc$ on the connected Shimura variety $\Sh^0(G,X)$. The description of this action depends on a special point $x\in X$ that is for now considered arbitrary and will later be specified conveniently.

\subsubsection*{Special points}

Let $E\subset B$ be an $F$-subalgebra which is a CM field extension of~$F$. Then let $\tit\subset G$ be the maximal torus with
\begin{equation*}
\tit (\bbq )=\{ e\in E^{\times }\mid\mathrm{N}_{E/F}(e)=1\} \subset B^1=G(\bbq ),
\end{equation*}
and let
$$T=\tit /\mathcal{Z}(G)\cong\bbt^E/\bbt^F$$
be its image in $\gad =G/\mathcal{Z}(G)$; that is,
\begin{equation*}
T(\bbq )=E^{\times }/F^{\times }\subset B^{\times }/F^{\times }=G(\bbq ).
\end{equation*}
By an \emph{incomplete CM-type} on $E$ we mean a subset $\varPhi\subset\Hom_{\bbq }(E,\bbc )$ such that $\varPhi\cap\iota\varPhi =\varnothing$ and
\begin{equation*}
\{ \varphi |_F^{\bbr }\mid \varphi\in\varPhi \} =\mathscr{P}\subseteq\Hom_{\bbq }(F,\bbr ).
\end{equation*}
There is a unique fixed point $x$ of $T(\bbr )$ on~$X$. The tangent space $\operatorname{Tgt}_xX$ is a complex vector space of dimension $\lvert\mathscr{P}\rvert$ with a linear action of $E^{\times }$ via its quotient $T(\bbq )$. The set $\varPhi$ of embeddings $E\to\bbc$ that appear as a character in the $E^{\times }$-representation on $\operatorname{Tgt}_xX$ is an incomplete CM-type on~$E$, and as an $E^{\times }$-module $\operatorname{Tgt}_xX$ is isomorphic to
$$\bigoplus_{\varphi\in\varPhi }\bbc_{\varphi }.$$

Since $T$ is constructed as a quotient of $\bbt^E=\Res_{E/\bbq }\bbg_m$, its character group becomes identified with a $\Gal (\qbar /\bbq )$-submodule of
\begin{equation*}
\mathrm{X}^{\ast }(\bbt^E)\cong\bigoplus_{\sigma\colon E\to\bbc }\bbz\cdot [\sigma ],
\end{equation*}
namely with
\begin{equation*}
\mathrm{X}^{\ast }(T)\cong \{\sum_{\sigma\colon E\to\bbc }a_{\sigma }[\sigma ]\mid a_{\sigma }\in\bbz ,\, a_{\sigma }+a_{\iota\sigma }=0\text{ for all }\sigma\} .
\end{equation*}
Let $\tilde{h}_x\colon \bbs\to (\bbt^E)_{\bbr }$ be the homomorphism of real algebraic tori given on character groups by
\begin{equation*}
\bigoplus_{\sigma\colon E\to\bbc }\bbz\cdot [\sigma ]\to \mathrm{X}^{\ast }(\bbs )=\bbz\cdot [\mathrm{id}]\oplus\bbz\cdot [\iota ],\qquad [\sigma ]\mapsto\begin{cases}
[\mathrm{id}]-[\iota ]&\text{if }\sigma\in\varPhi ,\\
[\iota ]-[\mathrm{id}]&\text{if }\sigma\in\iota\varPhi ,\\
0&\text{if }\sigma\notin\varPhi\cup\iota\varPhi ,
\end{cases}
\end{equation*}
and let $h_x\colon\bbs\to T_{\bbr }$ be its composition with $\bbt^E\to T$. An easy calculation shows that the composition $h_x\colon\bbs\to T_{\bbr }\subset G_{\bbr }$ is precisely the homomorphism corresponding to the point $x\in X$, when $X$ is interpreted as a $\gad (\bbr )^+$-conjugacy class of homomorphisms $\bbs\to \gad_{\bbr }$.

The corresponding cocharacter $\tilde{\mu }_x\colon\bbg_{m,\bbc }\to (\bbt^E)_{\bbc }$ which defines the Hodge filtration is given by
\begin{equation}\label{LiftOfMuxOnChar}
\mathrm{X}^{\ast }(\bbt^E)\to \mathrm{X}^{\ast }(\bbg_m)=\bbz ,\qquad [\sigma ]\mapsto\begin{cases}
1&\text{if }\sigma\in\varPhi ,\\
-1&\text{if }\sigma\in\iota\varPhi ,\\
0&\text{if }\sigma\notin\varPhi\cup\iota\varPhi ;
\end{cases}
\end{equation}
this uniquely determines $\mu_x\colon\bbg_{m,\bbc }\to T_{\bbc }$. The cocharacter $\mu_x$ (but not necessarily $\tilde{\mu }_x$, see Remark~\ref{Rem:WhatGoesWrong}.(i)) satisfies the Serre condition and hence factors through the Serre torus. That is, there is a unique homomorphism $\alpha_x\colon\mathfrak{S}\to T$ defined over $\bbq$ such that the diagram
\begin{equation*}
\xymatrix{
\bbs \ar[rr]^-{h_x}\ar[rd]_-{h_{\mathrm{can}}} && T_{\bbr }\\
&\mathfrak{S}_{\bbr } \ar[ur]_-{(\alpha_x)_{\bbr }}
}
\end{equation*}
commutes.

\subsubsection*{The twisted group}

The pushforward of the $\mathfrak{S}$-torsor ${^{\tau }\mathfrak{S}}$ along the composition
$$\alpha_x\colon\mathfrak{S}\to T\subset \gad$$
defines a cohomology class $g_{\tau }=\alpha_{x,\ast }(s_{\tau })\in \Hup^1(\bbq ,\gad )$ and, by interpreting $\gad$ as the inner automorphism group of $G$, an inner form $\taux G = {^{\tau }\mathfrak{S}}\times^{\mathfrak{S}}G$ of~$G$. Similarly we obtain the inner form $\taux\gad= {^{\tau }\mathfrak{S}}\times^{\mathfrak{S}}\gad$ of~$\gad$.

But $\gad$ is also the group of $F$-algebra automorphisms of $B$, hence twisting by $\alpha_{x,\ast }({^{\tau }\mathfrak{S}})$ defines an $F$-algebra $\taux B=B\times^{\mathfrak{S}} {^{\tau }\mathfrak{S}}$. It is a form of $B$, hence a quaternion algebra over~$F$; the group $\taux\gad (\bbq )$ can be canonically identified with $\taux B^{\times }/F^{\times }$, and similarly for~$\taux G$. The point $\sp (\tau |_{\qbar } )\in {^\tau\mathfrak{S}}(\bba^f)$ trivialises the torsor $^\tau\mathfrak{S}$ over~$\bba^f$, hence it defines a canonical isomorphism of algebras over $F\otimes_{\bbq }\bba^f\cong\bba_F^f$:
\begin{equation*}
\eta\colon B\otimes_F\bba_F^f\to \taux B\otimes_F\bba_F^f.
\end{equation*}
It can be split into local components
\begin{equation}\label{LocalComparisonIsomorphism}
\eta_v\colon B\otimes_FF_v\to \taux B\otimes_FF_v
\end{equation}
for every finite prime $v$ of~$F$. We will also denote the corresponding isomorphisms
\begin{equation}\label{EtaIsomorphismForSemisimpleGroups}
G_{\bba^f}\to\taux G_{\bba^f}\quad\text{and}\quad \gad_{\bba^f}\to\taux\gad_{\bba^f}
\end{equation}
by $\eta$. There is also a canonical isomorphism of topological groups
\begin{equation}\label{EtaIsomorphismForCongruenceCompletion}
\eta\colon\mathcal{A}(G)\to\mathcal{A}(\taux G)
\end{equation}
compatible with (\ref{EtaIsomorphismForSemisimpleGroups}), constructed in \cite[Lemma~8.2]{MilneShih1982a}.
\begin{Proposition}\label{ExactShapeOfTwistedGroup}
For every finite prime $v$ of $F$, the quaternion algebra $\taux B$ splits over $v$ if and only if $B$ splits over~$v$. The set of infinite primes of $F$ where $\taux B$ splits is equal to $\tau\mathscr{P}$.
\end{Proposition}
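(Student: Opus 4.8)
The plan is to handle the finite primes by the adelic comparison isomorphism and the archimedean primes by localising the twisting class and reading it off from the incomplete CM-type~$\varPhi$. For a finite prime $v$ of $F$ there is nothing to do: by~\eqref{LocalComparisonIsomorphism} the point $\sp(\tau|_{\qbar})\in{}^{\tau}\mathfrak{S}(\bba^f)$ trivialises the torsor $^{\tau}\mathfrak{S}$ over $\bba^f$ and hence furnishes an isomorphism of $F_v$-algebras $\eta_v\colon B\otimes_F F_v\to\taux B\otimes_F F_v$. Quaternion algebras over $F_v$ sharing an underlying algebra have the same Hasse invariant, so $\taux B$ splits at $v$ exactly when $B$ does.

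For an archimedean place I would use that $\taux B$ is the form of $B$ obtained by twisting with the class $g_\tau=\alpha_{x,\ast}(s_\tau)\in\Hup^1(\bbq,\gad)$, which by Shapiro's lemma and $\gad=\Res_{F/\bbq}\had$, $\had=B^{\times}/F^{\times}$, is a class in $\Hup^1(F,\had)$. For a real place $\varrho$ of $F$ put $\had_\varrho=\had\otimes_{F,\varrho}\bbr$; this is $\PGL_{2,\bbr}$ if $\varrho\in\mathscr{P}$ and the compact form $\PU_{2,\bbr}$ otherwise, and in both cases $\Hup^1(\bbr,\had_\varrho)$ has exactly two elements, the nontrivial one interchanging the split and the ramified local algebra. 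Hence $\taux B\otimes_{F,\varrho}\bbr\cong B\otimes_{F,\varrho}\bbr$ precisely when the $\varrho$-component of the localisation $g_\tau|_{\bbr}\in\Hup^1(\bbr,\gad_{\bbr})=\prod_{\varrho\mid\infty}\Hup^1(\bbr,\had_\varrho)$ is trivial, and the proposition reduces to the claim that this component is nontrivial exactly for $\varrho\in\mathscr{P}\triangle\tau\mathscr{P}$; equivalently, that $\taux B$ splits at $\varrho$ if and only if $\varrho\in\tau\mathscr{P}$.

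To evaluate the $\varrho$-component, factor $\alpha_x$ through $T=\tit/\mathcal{Z}(G)$ and localise at $\bbr$. Since $E/F$ is a CM extension, $E\otimes_{F,\varrho}\bbr\cong\bbc$ for every real $\varrho$, so the $\varrho$-factor of $T_{\bbr}$ is the norm-one torus $\bbs^1=\ker(\mathrm{N}\colon\bbs\to\bbg_{m,\bbr})$, embedded in $\had_\varrho$ as a compact maximal torus, and one checks that the induced map $\Hup^1(\bbr,\bbs^1)=\bbz/2\to\Hup^1(\bbr,\had_\varrho)=\bbz/2$ is bijective. Thus the sought component is the image of $s_\tau|_{\bbr}$ — represented by the cocycle $\iota\mapsto z_\infty(\tau)^{-1}\cdot\iota(z_\infty(\tau))$ with values in $\mathfrak{S}(\bbc)$ — under the $\varrho$-projection $\Hup^1(\bbr,\mathfrak{S}_{\bbr})\to\Hup^1(\bbr,\bbs^1)$ induced by $\alpha_x$; and by $\mu_x=(\alpha_x)_{\bbc}\circ\mu_{\can}$ together with the explicit description~\eqref{LiftOfMuxOnChar} of $\mu_x$, this projection is the homomorphism attached to the CM-type~$\varPhi$. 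Carrying the pushforward out should produce, after the identification $\Hup^1(\bbr,\bbs^1)=\bbz/2$, exactly the indicator of ``$\varrho$ lies in precisely one of $\mathscr{P}$ and $\tau\mathscr{P}$'' — using $\{\psi|_F:\psi\in\varPhi\}=\mathscr{P}$, hence $\{(\tau\psi)|_F:\psi\in\varPhi\}=\tau\mathscr{P}$, and the fact that $\tau\varPhi$ is again an incomplete CM-type (because $\iota$ acts on embeddings of the CM field $E$ through its intrinsic conjugation, which commutes with post-composition by $\tau$). Conceptually, since $E^{\times}/F^{\times}$ acts trivially on the subalgebra $E\subset B$ by conjugation, the twist leaves $E$ untouched, so $\taux B$ canonically contains $E$; the Taniyama-group formalism is built precisely so that $g_\tau$ transports the Hodge cocharacter $\mu_x$ — equivalently the CM-type $\varPhi$ — to its conjugate $\tau\varPhi$, whence the archimedean places of $F$ split in $\taux B$ are $\{(\tau\psi)|_F:\psi\in\varPhi\}=\tau\mathscr{P}$.

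The main obstacle is this last archimedean step: recognising the image of the $z_\infty(\tau)$-cocycle in the combinatorial group $\Hup^1(\bbr,\bbs^1)=\bbz/2$ as the CM-type parity above. This is where Deligne's analysis of the action of $\Aut\bbc$ on CM Hodge structures through the Taniyama group enters — equivalently, the explicit interaction between $z_\infty$ and $\mu_{\can}$ — and the bookkeeping is delicate because one must keep apart the three mutually commuting operators in play: complex conjugation $\iota$, the automorphism $\tau$, and the Galois action defining $\tit$ via $\mathrm{N}_{E/F}=1$. Everything else — the finite primes, the reduction to $\bbz/2$-valued local classes, and the product-formula consistency check that $|\mathscr{P}\triangle\tau\mathscr{P}|$ is even — is routine.
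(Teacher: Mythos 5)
Your treatment of the finite primes is exactly the paper's: the trivialisation $\sp (\tau |_{\qbar })$ gives the $F_v$-algebra isomorphism \eqref{LocalComparisonIsomorphism}, and there is nothing more to say. Your reduction of the archimedean statement to a local cohomological computation is also a sensible framework: twisting by $g_\tau =\alpha_{x,\ast }(s_\tau )$, localising at each real place $\varrho$, identifying $\Hup^1(\bbr ,\had_\varrho )$ with $\bbz /2\bbz$, and claiming that the $\varrho$-component is nontrivial precisely for $\varrho\in\mathscr{P}\triangle\tau\mathscr{P}$ is indeed equivalent to the proposition.

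The problem is that this last claim is the entire content of the archimedean half, and you do not prove it: you write that carrying out the pushforward ``should produce'' the parity of $\mathscr{P}\triangle\tau\mathscr{P}$ and yourself flag this as ``the main obstacle''. Evaluating the class $s_\tau |_{\bbr }$ --- i.e.\ relating the transcendentally defined trivialisation $z_{\infty }(\tau )\in {^{\tau }\mathfrak{S}}(\bbc )$ to an explicit Galois-cohomological description of ${^{\tau }\mathfrak{S}}$ at the real place, and then pushing forward along the map determined by the incomplete CM-type $\varPhi$ --- is precisely the computation that the paper does not reproduce either but instead quotes as the main result of Shih's paper on conjugations of arithmetic automorphic function fields (remarking that it could alternatively be extracted from Milne--Shih's explicit cocycle description of the Taniyama group). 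So as a self-contained argument your proposal has a genuine gap at its central step. Moreover, your closing ``conceptual'' justification is circular: the assertion that the formalism transports $\mu_x$ (equivalently $\varPhi$) to $\tau\varPhi$ at the level of real forms of $B$ is essentially a restatement of the proposition, not a reason for it --- indeed in the paper the twisted datum $(\taux G,\taux X)$ is only seen to be of the expected quaternionic shape \emph{after} this proposition is known. To close the gap you would either have to import Shih's theorem, as the paper does, or actually perform the cocycle computation with $z_{\infty }(\tau )$ and $\mu_{\can }$ that you defer.
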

\begin{proof}
This is, in a slightly different language, the main result of~\cite{MR0572986}. It is stated in the language we use (but without proof and with a sign error) in \cite[Remark~4.1(b)]{MR1044823}. Note that the statement about the finite primes follows directly from (\ref{LocalComparisonIsomorphism}); for the infinite primes some tedious but elementary calculations are needed. We refer to \cite{MR0572986} for these, but it should be possible to replace them by the explicit cohomological calculations of~\cite{MilneShih1982}.
\end{proof}

\subsubsection*{The twisted Shimura datum}

Let $\taux G$ be the semisimple group constructed before, and let $\taux\gad$ be its adjoint group. Note that the torus $\taux T\subset\taux\gad$ is canonically isomorphic to $T$ (even though the cohomology class $t_{\tau }\in\Hup^1(\bbq ,T)$ may be nontrivial) because the action of $T$ on itself by inner automorphisms is trivial. Hence we may apply $\tau$ to the cocharacter $\mu_x\colon\bbg_{m,\bbc }\to T_{\bbc }$ and interpret the result as a cocharacter $\tau\mu_x\colon\bbg_{m,\bbc }\to\taux T_{\bbc }$. This gives rise to a homomorphism $^{\tau }h\colon\bbs\to\taux \gad_{\bbr }$. We let $\taux X$ be the $\taux\gad(\bbr )^+$-conjugacy class of homomorphisms $\bbs\to\taux\gad_{\bbr }$ containing~$^\tau h$. We denote $^\tau h$, when viewed as a point in $\taux X$, by~$^{\tau }x$. An easy calculation shows that then
$$\taux (G ,X)=(\taux G,\taux X)$$
is again a connected Shimura datum.

Its `classical' description is this: for every $\varrho\in\mathscr{P}$ we are given an isomorphism $\taux B\otimes_{F,\tau\circ\varrho }\bbr\simeq\mathrm{M}_2(\bbr )$, and hence an operation of $\taux\gad (\bbr )$ on $\bbh^{\tau\mathscr{P}}$ by coordinate-wise M\"{o}bius transformations; corresponding to these isomorphisms we also obtain an isomorphism $\taux X\simeq\bbh^{\tau\mathscr{P}}$.

We are now ready to describe exactly how $\tau$ acts on the quaternionic Shimura varieties we consider.

\begin{Theorem}
There exists a unique isomorphism of $\bbc$-schemes
\begin{equation*}
\varphi_{\tau ,x}^0\colon\tau\Sh^0(G,X)\to\Sh^0(\taux G,\taux X)
\end{equation*}
which takes $\tau [x]$ to $[{^{\tau }x}]$ and is equivariant for
$$\eta\colon \mathcal{A}(G)\to\mathcal{A}(\taux G)$$
 as in (\ref{EtaIsomorphismForCongruenceCompletion}).
\end{Theorem}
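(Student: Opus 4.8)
The plan is to deduce the theorem from the theorem of Langlands, Milne and Shih on conjugates of Shimura varieties — conjectured in \cite[\S5]{MR546619} and proved in \cite{MR717596, MilneShih1982a} — together with Milne's treatment of connected Shimura varieties and automorphic bundles in \cite{MR931206}, supplying the (soft) uniqueness part directly. As a first step I would realise $\Sh^0(G,X)$ as a connected component of the non-connected quaternionic Shimura variety $\Sh(G_1,X_1)$ attached to $G_1=\Res_{F/\bbq }B^{\times }$ and the $G_1(\bbr )$-conjugacy class $X_1$ of homomorphisms $\bbs\to G_{1,\bbr }$ containing the $h_x$ of Section~\ref{Subs:MilneShihTheory}, noting that $G=G_1^{\mathrm{der}}$ and that, by Deligne's computation of $\pi_0$ (\cite[2.1]{MR546620}), the $\mathcal{A}(G)$-action on $\Sh^0(G,X)$ is precisely the one induced from the $G_1(\bba^f)$-action on $\Sh(G_1,X_1)$ and the action of $G_1(\bbq )$ permuting connected components, transported through the amalgamated-product identification~(\ref{GAasAmalgamatedProduct}).

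Next I would apply the Langlands--Milne--Shih theorem at the special point $x$: for each $\tau\in\Aut\bbc$ one obtains a twisted Shimura datum, constructed by pushing the $\mathfrak{S}$-torsor ${^{\tau }\mathfrak{S}}$ forward along the (analogue of the) homomorphism $\alpha_x\colon\mathfrak{S}\to T\subset\gad$, and a canonical isomorphism $\tau\Sh(G_1,X_1)\xrightarrow{\ \sim\ }\Sh(\taux G_1,\taux X_1)$ that is equivariant for a canonical comparison isomorphism $G_1(\bba^f)\to\taux G_1(\bba^f)$ and that, at the level of the connected component through the conjugate of the special point, sends $\tau[x,1]$ to $[{^{\tau }x},1]$. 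The essential structural point — already hard-wired into the constructions of Sections~\ref{Subs:STG}--\ref{Subs:MilneShihTheory} — is that the twisting operation commutes with passage to the derived and adjoint groups and, by Proposition~\ref{ExactShapeOfTwistedGroup}, produces exactly the quaternion algebra $\taux B$, the groups $\taux G$ and $\taux\gad$, the Shimura datum $\taux X$, and the comparison isomorphisms $\eta$ of~(\ref{EtaIsomorphismForSemisimpleGroups}) and~(\ref{EtaIsomorphismForCongruenceCompletion}). Restricting the Milne--Shih isomorphism to the connected component through $\tau[x,1]$, identified via $\tau[x]\mapsto[{^{\tau }x}]$ with $\Sh^0(\taux G,\taux X)$, and verifying that its $G_1(\bba^f)$-equivariance together with the action on connected components descends to equivariance for $\eta\colon\mathcal{A}(G)\to\mathcal{A}(\taux G)$ — which is what \cite[\S\S2--4]{MR931206} carries out in the connected language — produces the desired $\varphi_{\tau ,x}^0$.

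For uniqueness, let $\varphi_1,\varphi_2$ be two isomorphisms as in the statement; then $\psi=\varphi_2\circ\varphi_1^{-1}$ is an automorphism of $\Sh^0(\taux G,\taux X)$ that is equivariant for the identity of $\mathcal{A}(\taux G)$ and fixes $[{^{\tau }x}]$. Since $\taux G(\bba^f)$ acts on $\Sh^0(\taux G,\taux X)(\bbc )\cong \taux G(\bbq )\backslash\taux X\times\taux G(\bba^f)$ by $a\colon[y,g]\mapsto[y,ga^{-1}]$, equivariance forces $\psi$ to fix every point $[{^{\tau }x},g]$ with $g\in\taux G(\bba^f)$. The image of this set at a finite level $\Sh^0_{\varGamma '}(\taux G,\taux X)=\varGamma '\backslash\taux X$ is the image of the $\taux G(\bbq )$-orbit of ${^{\tau }x}$, which is analytically dense in $\taux X$ because $\taux G(\bbq )$ is dense in $\taux G(\bbr )$ (real approximation for the connected group $\taux G$) and $\taux G(\bbr )$ acts transitively on $\taux X$; hence it is Zariski dense in the smooth variety $\Sh^0_{\varGamma '}(\taux G,\taux X)$. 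As $\psi$ is $\hat{\varGamma }'$-equivariant it descends to each finite level $\Sh^0_{\varGamma '}(\taux G,\taux X)=\hat{\varGamma }'\backslash\Sh^0(\taux G,\taux X)$, where it fixes a Zariski-dense subset of a reduced separated scheme and is therefore the identity; passing to the limit gives $\psi=\id$.

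I expect the only genuine difficulty to lie in the existence part, and to be a matter of bookkeeping rather than of new ideas: one must check that the abstract twisted datum assembled here out of the Serre and Taniyama groups and the splitting $\sp$ coincides with the datum occurring in the Milne--Shih theorem, and — the more delicate point — that the $G_1(\bba^f)$-equivariance of the Milne--Shih isomorphism, combined with Deligne's description of the Galois action on $\pi_0$, descends precisely to the $\mathcal{A}(G)$-equivariance demanded in the statement. Both verifications occupy several pages of \cite{MilneShih1982a} and \cite{MR931206}, which I would follow rather than reprove.
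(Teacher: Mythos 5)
The paper gives no argument for this theorem at all: it is invoked as a special case of the Langlands--Milne--Shih theorem \cite{MR546619, MR717596, MilneShih1982a}, with the earlier classical proofs of Doi--Naganuma \cite{MR0219537} and Shih \cite{MR0572986} noted, and with the connected, $\mathcal{A}(G)$-equivariant formulation taken from Milne \cite{MR931206, MR1044823}. Your overall strategy -- defer the substance to exactly these sources -- therefore coincides with the paper's. What you add beyond the paper is the uniqueness argument, and that part is correct: equivariance for $\eta$ forces $\psi =\varphi_2\circ\varphi_1^{-1}$ to fix the whole $\mathcal{A}(\taux G)$-orbit of $[{^{\tau }x}]$; at each finite level its image is the image of the $\taux G(\bbq )$-orbit of ${^{\tau }x}$ (strong approximation for the simply connected $\taux G$, which satisfies Eichler's condition since $\tau\mathscr{P}\neq\varnothing$ by Proposition~\ref{ExactShapeOfTwistedGroup}, collapses the adelic factor), this orbit is dense by real approximation and transitivity of $\taux G(\bbr )$ on $\taux X$, and an automorphism of a reduced separated finite-level quotient fixing a Zariski-dense set of points is the identity; equivariance lets $\psi$ descend to each finite level, so $\psi =\id$ in the limit.

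The one step that should not stand as written is the opening reduction: for $G_1=\Res_{F/\bbq }B^{\times }$ the scheme $\Sh^0(G,X)$ is in general \emph{not} a connected component of $\Sh (G_1,X_1)$. At a finite level $K\subset G_1(\bba^f)$ the neutral component is $\varGamma_K\backslash X$ with $\varGamma_K$ the image in $\gad (\bbq )^+$ of $G_1(\bbq )_+\cap K$, and this group typically contains images of elements of $B^{\times }$ with unit reduced norm that lie in the image of no congruence subgroup of $G(\bbq )=B^1$, however small $K$ is; in the Hilbert--Blumenthal case this is the familiar enlargement of $\SL_2(\mathfrak{o}_F)$ by totally positive units. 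Consequently $\Sh^0(G,X)$ is only a pro-covering of that neutral component, and the $\mathcal{A}(G)$-action is strictly finer than anything induced from the $G_1(\bba^f)$-action together with the permutation of components; recovering the connected statement with its $\mathcal{A}(G)$-equivariance is precisely the content of Deligne's $\pi_0$-formalism \cite{MR546620} combined with Milne's connected-variety treatment \cite{MR931206, MR1044823} (or a limit over all reductive groups with derived group $G$), not a literal restriction to one component. Since you explicitly defer to \cite{MilneShih1982a, MR931206} for this descent, the proposal is sound in substance, but the first sentence of the existence argument should be rephrased as an appeal to that connected formulation rather than as an identification of $\Sh^0(G,X)$ with a component of $\Sh (G_1,X_1)$.
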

This is a special case of a result of Milne--Shih~\cite{MR717596}, conjectured by Langlands~\cite{MR546619}. For this case this result had been proved earlier in more classical language in \cite{MR0219537} for Shimura curves and in \cite{MR0572986} for quaternionic Shimura varieties of arbitrary dimension.
The present author prefers the more abstract setup utilised here because it also leads to a useful description of the Galois actions on automorphic bundles, needed later on.

Let $\varGamma\subset G (\bbq )=B^1$ be a congruence subgroup, let $\hat{\varGamma }$ be its congruence completion, i.e.\ its topological closure in $G (\bba^f)$, and let $\taux\varGamma =\taux G (\bbq )\cap\eta (\hat{\varGamma })$. Then $\taux\varGamma$ is a congruence subgroup of $\taux G (\bbq )\simeq (B')^1$, and we obtain an isomorphism of complex algebraic varieties
$$(\varphi_{\tau ,x}^0)_{\varGamma }\colon\tau\Sh_{\varGamma }^0(G,X)\to \Sh_{\taux\varGamma }^0(\taux G,\taux X).$$

\begin{Example}\label{ExampleConjugatesCongruenceSubgroup}
\begin{enumerate}
\item Let $\mathscr{O}\subset B$ be an order, and let $\hat{\mathscr{O}}=\mathscr{O}\otimes_{\bbz }\hat{\bbz }$ be its closure in $B_{\bba^f}=B\otimes_F\bba_F^f$. Then $\hat{\mathscr{O}}$ is a compact open subring of $B_{\bba^f}$, hence $\eta (\hat{\mathscr{O}})$ is a compact open subring of $\taux B_{\bba^f}$. We set
$$\taux\mathscr{O}\defined\eta (\hat{\mathscr{O}})\cap\taux B.$$
This is an order in $\taux B$; it is maximal if and only if $\mathscr{O}$ is maximal.

Then $\varGamma =\mathscr{O}^1$ is a congruence subgroup of $G (\bbq )$ and $\taux\varGamma =(\taux\mathscr{O})^1$ is a congruence subgroup of $\taux G (\bbq )$. If $S$ is the complex algebraic variety with $S(\bbc )=\mathscr{O}^1\backslash\bbh^{\mathscr{P}}$ and $\taux S$ is the complex algebraic variety with $\taux S(\bbc )=(\taux\mathscr{O})^1\backslash\bbh^{\tau\mathscr{P}}$, then the above-mentioned theorem states the existence of a canonical isomorphism $\tau S\cong\taux S$.
\item Continue with the notation from the previous example, and let $\mathfrak{a}$ be an integral ideal of~$F$. Using the fact that $\eta$ is an isomorphism of $\bba_F^f$-algebras we find the formula
$$\taux (\varGamma (\mathfrak{a}))=(\taux\varGamma )(\mathfrak{a})$$
for principal congruence subgroups. Hence if we let $S(\mathfrak{a})$ and $\taux S(\mathfrak{a})$ be the varieties with $S(\mathfrak{a})(\bbc )=\mathscr{O}^1(\mathfrak{a})\backslash X$ and $\taux S(\mathfrak{a})(\bbc )=(\taux\mathscr{O})^1(\mathfrak{a})\backslash\taux X$, then $\tau S(\mathfrak{a})\cong\taux S(\mathfrak{a})$.
\end{enumerate}
\end{Example}

\begin{Remark}\label{Rem:WhatGoesWrong}
\begin{enumerate}
\item The point $x$ is always a special point, but only a CM point if $B$ is totally indefinite. This is not a defect of our construction but necessary: CM points exist only if the weight homomorphism $\bbg_{m,\bbr }\to\bbs\to\gad_{\bbr }$ is defined over~$\bbq$ (compare the discussion in \cite[Section~12]{MR2192012}). In our case the field of definition of the weight homomorphism (which is independent of $x$) is the fixed field in $\qbar$ of
$$\{ \sigma\in\Gal (\qbar /\bbq )\mid \sigma\circ\mathscr{P}=\mathscr{P}\} .$$
Clearly this is equal to $\bbq$ if and only if $\mathscr{P}=\mathscr{S}_{\infty }$, the set of all archimedean places of~$F$.

If this is the case, the map $\varrho_x\colon\mathfrak{S}\to T$ can be lifted to $\bbt^E$ and the above construction simplifies considerably. The cohomology class $\varrho_{x,\ast }(s_{\tau })$ is then the image of a class in $\Hup^1(\bbq ,\bbt^E)\cong \Hup^1(E,\bbg_m)=0$ and therefore the connected Shimura datum $^{\tau ,x}(G,X)$ is isomorphic to $(G,X)$.
\item The choice of $x$ is inessential: changing it amounts to applying certain canonical isomorphisms everywhere, cf.~\cite[Proposition~1.3 and Lemma~5.1]{MR931206}.
\end{enumerate}
\end{Remark}

\subsubsection*{Galois conjugates of automorphic bundles}\label{Subs:MilneShihTheoryAutom}

Let $\taux\check{X}$ be the compact dual symmetric space of $\taux X$. This comes with a natural action of $\taux G_{\bbc }$. Since $\taux X$ is derived from a quaternion algebra $\taux B$ over $F$, we can construct a $\taux G_{\bbc }$-equivariant line bundle $\taux \mathfrak{L}_{\varrho }$ on $\taux\check{X}$, for every $\varrho\colon F\to\bbr$ at which $\taux B$ is unramified; that is, for every $\varrho\in\tau\mathscr{P}$.

Now the trivialisation $z_{\infty }(\tau )\in {^{\tau }\mathfrak{S}(\bbc )}$ defines an isomorphism of complex algebraic groups
\begin{equation}\label{IsomorphismOfGroupsOverC}
G_{\bbc }\to\taux G_{\bbc },\qquad g\mapsto [z_{\infty }(\tau ),g].
\end{equation}
\begin{Proposition}
There is a commutative diagram
\begin{equation*}
\xymatrix{
\tau \mathfrak{L}_{\varrho } \ar[r]^-{\cong }\ar[d] & \taux \mathfrak{L}_{\tau\circ\varrho } \ar[d] \\
\tau \check{X} \ar[r]_-{\cong } &\taux\check{X}
}
\end{equation*}
where the vertical maps are the natural structure maps of line bundles, the horizontal isomorphisms are equivariant for (\ref{IsomorphismOfGroupsOverC}) and the lower horizontal map sends $\tau [x]$ to~$[ {^{\tau }x}]$.
\end{Proposition}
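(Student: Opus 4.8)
The plan is to reduce the statement to Milne's description of the action of $\tau$ on automorphic vector bundles \cite{MR931206}, using the concrete realisation of $\mathfrak L_\varrho$ through parabolic subgroups, and to read off the indexing by places from the computation already underlying Proposition~\ref{ExactShapeOfTwistedGroup}.

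First I would recast everything in terms of equivariant bundles. The compact dual is canonically a homogeneous space $\check X\cong G_\bbc/Q_x$, where $Q_x\subset G_\bbc$ is the stabiliser of the point $x$, i.e.\ the inverse image in $G_\bbc$ of the parabolic subgroup of $\gad_\bbc$ attached to the cocharacter $\mu_x$ (which factors through $T_\bbc$). Under the equivalence between $G_\bbc$-equivariant line bundles on $\check X$ and characters of $Q_x$, the bundle $\mathfrak L_\varrho$ corresponds to the character $\chi_\varrho\colon Q_x\to\bbg_{m,\bbc}$ that one reads off from the classical picture: the composite of $Q_x\hookrightarrow G_\bbc\twoheadrightarrow\SL_2(\bbc)^{\mathscr{P}}\xrightarrow{\operatorname{pr}_\varrho}\SL_2(\bbc)$ with the character by which the Borel fixing $\operatorname{pr}_\varrho(x)\in\bbp^1$ acts on the fibre of $\mathcal O(1)$ over that point. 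The essential feature is that $\chi_\varrho$ depends only on the pair consisting of the cocharacter $\mu_x$ and the place $\varrho$. The same description applies verbatim to $\taux(G,X)$, yielding $\taux\check X\cong\taux G_\bbc/Q_{{}^{\tau}x}$ with $Q_{{}^{\tau}x}$ the stabiliser of ${}^\tau x$, and for $\varrho\in\mathscr{P}$ a character $\chi'_{\tau\circ\varrho}$ of $Q_{{}^{\tau}x}$, depending only on the cocharacter $\tau\mu_x$ (viewed in $\taux\gad_\bbc$ through the canonical identification $\taux T\cong T$) and the place $\tau\circ\varrho$, which defines $\taux\mathfrak L_{\tau\circ\varrho}$.

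Next I would apply $\tau$. Since $G$, $\gad$, $T$, and $\bbp^1$ with its $\SL_2$-action are defined over $\bbq$, applying $\tau$ turns $(\mathfrak L_\varrho\to\check X)$ with its $G_\bbc$-action into $(\tau\mathfrak L_\varrho\to\tau\check X)$ with its $G_\bbc$-action, where $\tau\check X = G_\bbc/\tau Q_x$ and $\tau Q_x$ is the stabiliser of $\tau x$, and $\tau\mathfrak L_\varrho$ corresponds to the character $\tau(\chi_\varrho)$. The isomorphism $G_\bbc\xrightarrow{\sim}\taux G_\bbc$, $g\mapsto[z_\infty(\tau),g]$, restricts on $T_\bbc$ to the canonical identification $T_\bbc\cong\taux T_\bbc$ --- the torsor ${}^\tau\mathfrak S$ twists $T$ trivially, since $\alpha_x$ lands in the commutative group $T$ --- so it carries the cocharacter $\tau\mu_x$ attached to $\tau x$ to the one attached to ${}^\tau x$, hence carries $\tau Q_x$ onto $Q_{{}^{\tau}x}$ and induces the wanted isomorphism of bases $\tau\check X\to\taux\check X$ sending $\tau x\mapsto{}^\tau x$. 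It then lifts to an isomorphism of equivariant line bundles $\tau\mathfrak L_\varrho\to\taux\mathfrak L_{\tau\circ\varrho}$ precisely when the character $\chi'_{\tau\circ\varrho}$ pulls back to $\tau(\chi_\varrho)$ under $\tau Q_x\xrightarrow{\sim}Q_{{}^{\tau}x}$.

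This last equality of characters is the one genuine point. By the bundle description above, each side is determined by its cocharacter and place, and the cocharacters correspond to one another as just explained, so everything comes down to the assertion that $\gad_\bbc\to\taux\gad_\bbc$ matches the $\varrho$-th factor of $\gad_\bbc\cong\prod_{\sigma\colon F\to\bbc}\PGL_{2,\bbc}$ with the $(\tau\circ\varrho)$-th factor of $\taux\gad_\bbc$. Here one uses that this product decomposition is canonical, with $\Gal(\qbar/\bbq)$ permuting the factors through its action on $\Hom(F,\qbar)$, and that an inner twist --- which is what produces $\taux\gad$ from $\gad$ --- leaves this ``outer'' structure untouched, so that the shift of index is governed purely by how $z_\infty(\tau)$ intertwines the two $\bbq$-forms. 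Carrying this out at the level of cocharacters via the universal property of the Serre torus (Lemma~\ref{UniversalPropertySerreTorusFiniteStep}) --- writing $\mu_x=(\alpha_x)_\bbc\circ\mu_{\can}$ and using that $\tau(\mu_{\can})$ is the cocharacter of $\mathfrak S_\bbc$ which on character groups is evaluation at $\tau|_{\qbar}$ rather than at the identity --- reduces it to the computation of \cite{MR0572986} (compare the cohomological version in \cite{MilneShih1982}); I expect this step, together with keeping the various orientation and sign conventions straight, to be the main obstacle.
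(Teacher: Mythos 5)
Your framework (equivariant line bundles on $\check{X}\cong G_{\bbc}/Q_x$ classified by characters of the stabiliser $Q_x$, the canonical identification $\taux T\cong T$, and the matching of the cocharacters $\tau\mu_x$ and $\mu_{{}^{\tau}x}$) is sound, and it is essentially the content of the results of Milne that the paper simply cites: the paper's own proof consists of invoking \cite[Proposition~2.7 and Theorem~3.10]{MR931206}, with the remark that in this special case one may instead do a direct calculation. However, the step you yourself single out as ``the one genuine point'' --- the identity $\chi'_{\tau\circ\varrho}\circ\phi=\tau(\chi_{\varrho})$ on $\tau Q_x$, where $\phi$ denotes the isomorphism (\ref{IsomorphismOfGroupsOverC}), i.e.\ the shift of the place index from $\varrho$ to $\tau\circ\varrho$ --- is left open, and the route you propose for it is off target. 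You assert that what is needed is that $\phi$ matches the $\varrho$-th factor of $\gad_{\bbc}$ with the $(\tau\circ\varrho)$-th factor of $\taux\gad_{\bbc}$, to be extracted from the computation of \cite{MR0572986}. That assertion is false as stated, and Shih's computation (which determines the ramification set of $\taux B$, i.e.\ Proposition~\ref{ExactShapeOfTwistedGroup}) cannot deliver it: knowing at which places $\taux B$ splits says nothing about how a fixed trivialisation over $\bbc$ matches the pairwise isomorphic complexified factors.

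The correct bookkeeping is simpler and consists of two facts which you state in part but then conflate. First, the index shift is produced entirely by the semilinear base change: since $\tau$ carries $B\otimes_{F,\varrho}\bbc$ to $B\otimes_{F,\tau\circ\varrho}\bbc$, the character $\tau(\chi_{\varrho})$ of $\tau Q_x\subset G_{\bbc}$ already factors through the $(\tau\circ\varrho)$-factor. Second, $\phi$ shifts no index at all: the twist is along $\alpha_x\colon\mathfrak{S}\to T\subset\gad$, and $\gad$ acts on $B$ by $F$-algebra automorphisms, hence trivially on the centre $F$; therefore the trivialisation $B\otimes_{\bbq}\bbc\to\taux B\otimes_{\bbq}\bbc$ given by $z_{\infty}(\tau)$ is $F\otimes_{\bbq}\bbc$-linear and matches the $\sigma$-factor with the $\sigma$-factor for every $\sigma\colon F\to\bbc$. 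Combining this with the fact (which you did verify) that $\phi$ restricts on $T_{\bbc}$ to the canonical identification and hence carries $\tau\mu_x$ to $\mu_{{}^{\tau}x}$ gives $\chi'_{\tau\circ\varrho}\circ\phi=\tau(\chi_{\varrho})$ directly, with no appeal to the universal property of the Serre torus or to \cite{MR0572986}. So the architecture of your argument is fine, but your sentence ``the shift of index is governed purely by how $z_{\infty}(\tau)$ intertwines the two $\bbq$-forms'' is exactly backwards, and as written the decisive step is both misidentified and not carried out.
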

\begin{proof}
This is a simple combination of \cite[Proposition~2.7 and Theorem~3.10]{MR931206}, but in our special case it can also be checked by a straightforward calculation.
\end{proof}
The $\taux G_{\bbc }$-line bundle $\taux\mathfrak{L}_{\varrho }$ on $\taux\check{X}$ gives rise to an automorphic line bundle $\taux\mathcal{L}_{\varrho }$ on $\Sh^0(\taux G,\taux X)$, which comes with a continuous $\taux G(\bba^f)$-action, just as for $\mathfrak{L}_{\varrho }$ on~$\check{X}$.
\begin{Theorem}\label{ConjugatesOfAutomorphicLineBundles}
The exists an isomorphism of line bundles $\tau \mathcal{L}_{\varrho }\to\taux \mathcal{L}_{\tau\varrho }$ covering the isomorphism $\varphi_{\tau ,x}^0\colon\tau\Sh^0(G,X)\to\Sh^0(\taux G,\taux X)$ and equivariant for $\eta\colon G(\bba^f)\to\taux G(\bba^f)$.
\end{Theorem}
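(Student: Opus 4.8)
The plan is to deduce the statement from Milne's general theory of Galois conjugates of automorphic vector bundles~\cite{MR931206}, using the preceding Proposition as the input concerning the compact dual symmetric space.

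Recall how $\mathcal{L}_{\varrho }$ arises: one starts with the $G_{\bbc }$-equivariant line bundle $\mathfrak{L}_{\varrho }$ on $\check{X}$, restricts the $G_{\bbc }$-action to a holomorphic $G (\bbr )$-action on $\mathfrak{L}_{\varrho }|_X\to X$ via the Borel embedding $X\hookrightarrow\check{X}$, and descends this bundle along the uniformisation $X\times G (\bba^f)\twoheadrightarrow G (\bbq )\backslash X\times G (\bba^f)=\Sh^0(G,X)(\bbc )$, where $G (\bbq )$ acts on $\mathfrak{L}_{\varrho }|_X$ through $G (\bbq )\subset G (\bbc )$, and the $G (\bba^f)$-action on $\mathcal{L}_{\varrho }$ is induced by right translation on the second factor; the resulting analytic bundle carries Milne's canonical algebraic structure~\cite[Section~4]{MR931206}. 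Producing the asserted isomorphism therefore amounts to producing a compatible isomorphism of descent data, once $\tau$ has been applied to everything in sight.

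First I would treat the base: by the Milne--Shih theorem recalled above, $\varphi_{\tau ,x}^0$ identifies $\tau\Sh^0(G,X)$ with $\Sh^0(\taux G,\taux X)$, realised on adelic points through $\eta$ on the $G (\bba^f)$-factor and sending $\tau [x]$ to $[{^{\tau }x}]$. Next I would treat the bundle on the compact dual: applying $\tau$ to $\mathfrak{L}_{\varrho }\to\check{X}$, the preceding Proposition supplies an isomorphism $\tau\mathfrak{L}_{\varrho }\cong\taux\mathfrak{L}_{\tau\circ\varrho }$ over $\tau\check{X}\cong\taux\check{X}$, equivariant for the isomorphism $G_{\bbc }\to\taux G_{\bbc }$, $g\mapsto [z_{\infty }(\tau ),g]$, and sending $\tau [x]$ to $[{^{\tau }x}]$; restricting along the Borel embedding gives the analogous statement over $\tau X\cong\taux X$. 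Plugging these two inputs into the descent description above, the candidate isomorphism $\tau\mathcal{L}_{\varrho }\to\taux\mathcal{L}_{\tau\varrho }$ is the one induced on the quotient $\tau\bigl(G (\bbq )\backslash X\times G (\bba^f)\bigr)$ by the map ``$z_{\infty }(\tau )$-twist on the $X$-factor, $\eta$ on the $G (\bba^f)$-factor'' acting on the total spaces of the pulled-back bundles.

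The one step that needs genuine care --- and where I would invoke \cite[Propositions~1.1, 2.7 and Theorem~3.10]{MR931206} rather than compute by hand --- is checking that this candidate descends, i.e.\ is equivariant for the diagonal $G (\bbq )$-action used to form the quotients, and hence covers $\varphi_{\tau ,x}^0$. The subtlety is that the identification on the archimedean factor uses the trivialisation $z_{\infty }(\tau )\in{^{\tau }\mathfrak{S}(\bbc )}$ of the $\mathfrak{S}$-torsor ${^{\tau }\mathfrak{S}}$, while $\eta$ on the finite-adelic factor uses the splitting $\operatorname{sp}(\tau |_{\qbar })\in{^{\tau }\mathfrak{S}(\bba^f)}$; these are coherent exactly because both are points of one and the same torsor ${^{\tau }\mathfrak{S}}$ over $\bbq$, and it is this coherence --- encoded by the period torsor $\mathfrak{P}$ and its canonical point $c$ --- that forces the twisted descent datum to remain defined over $\bbc$ and to match $\varphi_{\tau ,x}^0$. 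Once that is granted, equivariance for $\eta\colon G (\bba^f)\to\taux G (\bba^f)$ is immediate from the construction of the $G (\bba^f)$-action by right translation. In the quaternionic case at hand one could instead run the explicit cocycle computations of~\cite{MilneShih1982, MR0572986}, in the spirit of the proof of the preceding Proposition, but invoking Milne's theorem is the efficient route; I expect the only real obstacle to be the careful bookkeeping of these two trivialisations.
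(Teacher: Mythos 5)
Your proposal is correct in substance and takes essentially the same route as the paper, whose entire proof is the observation that the statement is a special case of Milne's Theorem~5.2 in \cite{MR931206}. Be aware only that the reference actually carrying the weight is that theorem (your citations of Propositions~1.1, 2.7 and Theorem~3.10 of \cite{MR931206} are what underlie the preceding Proposition on the compact dual), and that the heuristic of ``applying $\tau$ to the analytic descent data'' cannot be taken literally, since $\tau$ does not act on $X\times G(\bba^f)$ --- the coherence of the two trivialisations $z_{\infty }(\tau )$ and $\operatorname{sp}(\tau |_{\qbar })$, i.e.\ exactly the step you defer to Milne, is where all the content lies.
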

\begin{proof}
This is a special case of Milne's result \cite[Theorem~5.2]{MR931206}.
\end{proof}

\subsection{Galois conjugates of modular embeddings}

In this subsection we prove Theorem~B, first in the simple version stated in the introduction (Theorem~\ref{ThmConjModularEmbSimple} below) and then in a more elaborate version (Theorem~\ref{ThmConjModularEmbAdelic} below) taking into account the adelic structures.
\begin{Theorem}\label{ThmConjModularEmbSimple}
Let $C$ be a smooth complex curve, let $S=\Sh^0_{\varGamma }(G,X)$ be a quaternionic Shimura variety, with $\varGamma$ torsion-free. Let $f\colon C\to S$ be covered by a modular embedding with respect to $\varrho\colon F\to\bbr$. Let $\tau\in\Aut\bbc$, and identify $\tau S$ with a quaternionic Shimura variety via the isomorphism $(\varphi_{\tau ,x}^0)_{\varGamma }$.

Then $\tau f\colon\tau C\to\tau S$ is covered by a modular embedding with respect to $\tau\circ\varrho$.
\end{Theorem}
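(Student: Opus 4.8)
The plan is to deduce the statement from the purely algebro-geometric criteria already at our disposal, since those criteria are visibly compatible with applying $\tau$ once one knows how $\tau$ acts on the automorphic bundles $\mathcal{L}_{\varrho }$ and $\mathcal{M}_{\varrho }$. For $C$ projective I would invoke Corollary~\ref{CriteriaMEWithM}, and for $C$ affine Theorem~\ref{GaussManinCriterion}. In both cases $C$ is hyperbolic (this is built into the notion of being covered by a modular embedding), and after the harmless passage to a finite unramified covering and to a finite-index subgroup of $\varGamma$ performed in the proof of Corollary~\ref{CriteriaMEWithM} --- operations which commute with applying $\tau$ and under which ``covered by a modular embedding'' is transferred in both directions --- one may assume as much regularity as is convenient; in the affine case, that $f^{\ast }\bbw_{\varrho }$ has unipotent monodromy around the cusps, and that $\varGamma \in \Sigma (G)$.

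\emph{Projective case.} Here $f^{\ast }\!\mathcal{M}_{\varrho }\simeq\omega_C$ by Corollary~\ref{CriteriaMEWithM}. Applying $\tau$ and using that the formation of the canonical bundle commutes with base change along a field automorphism gives $(\tau f)^{\ast }(\tau\mathcal{M}_{\varrho })\simeq\omega_{\tau C}$. I would then identify $\tau\mathcal{M}_{\varrho }$ with $\taux\mathcal{M}_{\tau\varrho }$ on $\Sh^0(\taux G,\taux X)$ by squaring the isomorphism $\tau\mathcal{L}_{\varrho }\to\taux\mathcal{L}_{\tau\varrho }$ of Theorem~\ref{ConjugatesOfAutomorphicLineBundles} (which covers $\varphi_{\tau ,x}^0$ and descends to finite levels); note that $\taux\mathcal{M}_{\tau\varrho }$ is defined because $\taux B$ is unramified at $\tau\circ\varrho\in\tau\mathscr{P}$ by Proposition~\ref{ExactShapeOfTwistedGroup}, and that the decomposition $\Omega^1_{\taux S}=\bigoplus_{\sigma\in\tau\mathscr{P}}\taux\mathcal{M}_{\sigma }$ is the $\tau$-conjugate of $\Omega^1_S=\bigoplus_{\varrho }\mathcal{M}_{\varrho }$, since the canonical $G_{\bbc }$-equivariant isomorphism $\Omega^1_{\check X}\cong\bigoplus\mathfrak{L}_{\varrho }^{\otimes 2}$ is transported correctly by the isomorphism (\ref{IsomorphismOfGroupsOverC}). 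A final appeal to Corollary~\ref{CriteriaMEWithM} over $\tau S\cong\Sh^0_{\taux\varGamma }(\taux G,\taux X)$ and the place $\tau\circ\varrho$ then finishes this case.

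\emph{Affine case.} By Proposition~\ref{Prop:AffineImpliesHB} we have $B\simeq\mathrm{M}_2(F)$, so $\mathscr{P}=\mathscr{S}_{\infty }$ and $B$ is split at every finite place. The first point to settle is that $\taux B$ is again $\mathrm{M}_2(F)$: by Proposition~\ref{ExactShapeOfTwistedGroup} it is split at every finite place, and it is split at exactly the infinite places in $\tau\mathscr{P}$, so I only need that $\tau$ permutes $\mathscr{S}_{\infty }$. This follows from the elementary fact that a totally real number field is fixed pointwise by every complex conjugation on $\qbar$: applying it to $\varrho (F)$ and to the conjugate involution $\tau^{-1}\iota\tau$ (whose restriction to $\qbar$ is again a complex conjugation) shows $\tau (\varrho (F))\subseteq\bbr$, so $\tau\circ\varrho$ is again a real place. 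Thus $\tau S$ is again a Hilbert--Blumenthal variety. I would then apply $\tau$ to the family $p_C\colon A_C\to C$, to its $F$-action, and to the relative de~Rham cohomology with its Gauss--Manin connection and its $\varrho$-isotypical decomposition, the $\varrho$-eigenspace going to the $(\tau\circ\varrho)$-eigenspace by the tautological matching of characters. Crucially, $\tau$ also commutes with Deligne's canonical extension: although the latter is defined analytically, Remark~\ref{AlgebraicStructuresOnHodgeSubbundles} identifies its algebraic structure (and that of the Hodge subbundles) with the one obtained by truncating the relative de~Rham complex, which is manifestly $\tau$-equivariant. Hence $\tau\mathcal{E}_{\mathrm{ext}}$ is precisely the bundle ``$\mathcal{E}_{\mathrm{ext}}$'' attached to $\tau f$ and $\tau\circ\varrho$, the theta-characteristic condition $\mathcal{E}_{\mathrm{ext}}^{\otimes 2}\simeq\omega_{\Cbar }(\log D)$ is carried to the analogous condition over $\tau S$, and Theorem~\ref{GaussManinCriterion} gives the claim.

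The hard part will be the affine case, specifically the $\tau$-equivariance of the canonical extension together with the matching of eigenspace decompositions: one must check carefully that, under the identification of $\mathcal{H}^1_{\mathrm{dR}}$ of the conjugate family with the $\tau$-conjugate of $\mathcal{H}^1_{\mathrm{dR}}(A_C/C)$, the $F$-action that occurs is the ``correct'' one and the $\varrho$-eigenspace maps to the $(\tau\circ\varrho)$-eigenspace rather than to some other summand --- which is exactly the subtlety that Remark~\ref{AlgebraicStructuresOnHodgeSubbundles} and the adelic bookkeeping of the twisted Shimura datum are meant to control. The projective case, by contrast, is essentially formal once Theorem~\ref{ConjugatesOfAutomorphicLineBundles} and Proposition~\ref{ExactShapeOfTwistedGroup} are granted.
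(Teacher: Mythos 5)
Your proposal follows essentially the same route as the paper: in the projective case the paper combines the bundle-theoretic criterion (Theorem~\ref{CriteriaModularEmbedding}, of which your Corollary~\ref{CriteriaMEWithM} is the immediate consequence you use) with Theorem~\ref{ConjugatesOfAutomorphicLineBundles}, and in the affine case it applies Theorem~\ref{GaussManinCriterion} together with the observation that all the ingredients are algebraic and hence stable under $\tau$. One point of your affine argument is justified slightly incorrectly, though you rightly flag it as the delicate step: the identification of the algebraic structure via truncation of the relative de~Rham complex in Remark~\ref{AlgebraicStructuresOnHodgeSubbundles} lives only over $C$, where the family $p_C\colon A_C\to C$ exists, so by itself it does not control what $\tau$ does to the extension over $\Cbar$ --- an algebraic bundle on $C$ has many algebraic extensions to $\Cbar$, and you need to know that $\tau\mathcal{E}_{\mathrm{ext}}$ is again the \emph{canonical} one for the conjugate variation. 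The paper closes this by invoking the purely algebraic characterisation of the Deligne extension as the unique extension to an algebraic vector bundle equipped with an algebraic logarithmic connection whose residues are nilpotent; since nilpotency of residues is visibly preserved by $\tau$, the conjugate of the canonical extension is the canonical extension of the conjugate, and the rest of your argument (matching of $\varrho$- with $(\tau\circ\varrho)$-eigenspaces, the theta-characteristic condition, and the appeal to Theorem~\ref{GaussManinCriterion}) then goes through exactly as you describe.
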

\begin{proof}
For projective $C$ this follows easily by combining Theorems \ref{CriteriaModularEmbedding} and~\ref{ConjugatesOfAutomorphicLineBundles}.

For affine $C$ we use Theorem~\ref{GaussManinCriterion} instead. Note that all constructions involved are purely algebraic and stable under field automorphisms: relative de Rham cohomology and the Gauss--Manin connection are, and the Deligne extension of relative de Rham cohomology can be characterised as the unique extension to an algebraic vector bundle with algebraic logarithmic connection having nilpotent residue, cf.\ Remark~\ref{AlgebraicStructuresOnHodgeSubbundles}.
\end{proof}

\subsubsection*{Field automorphisms and uniformisation for curves}

Let $\varDelta\subset\PGL_2(\bbr )^+$ be a cocompact lattice, and let $\tau\in\Aut\bbc$. Then there exists a lattice ${^{\tau }}\!\varDelta\subset\PGL_2(\bbr )^+$, unique up to $\PGL_2(\bbr )^+$-conjugacy, such that applying $\tau$ to the complex algebraic curve underlying $\varDelta\backslash\bbh$ yields a curve isomorphic to ${^{\tau }}\!\varDelta\backslash\bbh$, where the isomorphism is in addition required to preserve the orders of the elliptic points. This is of course a simple construction, but for our purposes there are two disadvantages to this elementary formulation: the group ${^{\tau }}\!\varDelta$ is only well-defined up to conjugacy, and it does not harmonise with the more elaborate constructions needed to describe Galois conjugates of Shimura varieties.

Therefore we fix a point $y\in\bbh$. We shall construct an hermitian symmetric domain $\tauy\bbh$, a real algebraic group $\tauy\PGL_2$ with an action of $\tauy \PGL_2(\bbr )^+$ on $\tauy\bbh$ by biholomorphisms, and a lattice $\tauy\!\varDelta\subset\tauy\PGL_2(\bbr )^+$.

There is a well-defined smooth curve $C$ over $\bbc$ with $C(\bbc )\cong\varDelta\backslash\bbh$, and there is a map $e\colon C(\bbc )\to\bbn$ sending the image of a point $x\in\bbh$ to the order of the stabiliser $\varDelta_x$. Let $c\in C(\bbc )$ be the image of $y\in\bbh$. Then $(\tau C,\tau c)$ is again a smooth complex curve equipped with a geometric point, and we define $(\tauy\bbh,{^{\tau }}y)\to ((\tau C)^{\an },\tau c)$ to be a ramified covering space which has ramification order $e(\tau^{-1}(p))$ at $p\in \tau C(\bbc )$ and which is universal with these properties. Then $\tauy\bbh$ inherits a natural complex structure turning it into a bounded symmetric domain, and the Bergman metric is a canonical hermitian metric on $\tauy\bbh$ turning it into a hermitian symmetric domain. It is isomorphic, \emph{but not canonically}, to the upper half plane. The group $B$ of biholomorphisms of $\tauy\bbh$ is a real Lie group, and we let $\mathfrak{b}$ be its Lie algebra. Then there exists a unique connected real algebraic subgroup $\tauy\PGL_2\subset\GL (\mathfrak{b})$ such that $\tauy\PGL_2(\bbr )^+=\operatorname{Ad}B\cong B$ in $\GL (\mathfrak{b})$. By construction, the deck transformation group $\tauy\!\varDelta$ of $\tauy\bbh\to (\tau C)^{\an }$ is a lattice in $\tauy\PGL_2(\bbr )^+=B$, and there is a natural isomorphism $\tau (\varDelta\backslash\bbh)\cong\tauy\!\varDelta\backslash\tauy\bbh$ preserving the orders of elliptic points.
\begin{Proposition}\label{ShapePreserved}
The groups $\varDelta$ and $\tauy\!\varDelta$ are isomorphic as abstract groups. The isomorphism can be chosen in such a way as to send elliptic, parabolic and hyperbolic elements to elements of the same type.
\end{Proposition}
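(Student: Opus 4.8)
The plan is to reduce the proposition to two observations: that a cocompact Fuchsian group is determined up to abstract isomorphism by its \emph{signature}, and that the construction of $\tauy\!\varDelta$ leaves the signature of $\varDelta$ unchanged. Recall that $\varDelta$ is, as an abstract group, the orbifold fundamental group of the quotient orbifold $\varDelta\backslash\bbh$, a compact oriented $2$-orbifold; since $\varDelta$ has finite covolume it has only finitely many conjugacy classes of elliptic elements, so the underlying space of this orbifold is the compact Riemann surface $C(\bbc)$ and it carries finitely many cone points, of orders $m_1,\dots,m_n$, say. The isomorphism type of $\varDelta$ depends only on the signature $(g;m_1,\dots,m_n)$, where $g$ is the genus of $C$; concretely, $\varDelta$ admits the classical presentation
\begin{equation*}
\varDelta\cong\big\langle\, a_1,b_1,\dots,a_g,b_g,\,x_1,\dots,x_n \;\big\vert\; x_i^{m_i}=1,\ \textstyle\prod_{j=1}^{g}[a_j,b_j]\prod_{i=1}^{n}x_i=1 \,\big\rangle,
\end{equation*}
so that two cocompact lattices with the same signature are abstractly isomorphic.

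First I would dispose of the claim about element types, which in the cocompact setting is automatic. A cocompact lattice in $\PGL_2(\bbr)^+$ has no parabolic elements, because a parabolic element in a discrete subgroup produces a cusp in the quotient; this applies to $\varDelta$ and, since $\tauy\bbh$ is (noncanonically) isomorphic to $\bbh$ so that $\tauy\!\varDelta$ may be regarded as a cocompact lattice in the corresponding copy of $\PGL_2(\bbr)^+$, also to $\tauy\!\varDelta$. In a discrete subgroup of $\PGL_2(\bbr)^+$ the elliptic elements are exactly the nontrivial elements of finite order, whereas parabolic and hyperbolic elements have infinite order; hence, with parabolics excluded, the nontrivial elements split into the finite-order (elliptic) ones and the infinite-order (hyperbolic) ones. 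Consequently \emph{any} abstract isomorphism $\varDelta\to\tauy\!\varDelta$ preserves the order of each element and therefore carries elliptic elements to elliptic elements and hyperbolic elements to hyperbolic elements, the assertion about parabolic elements being vacuous. It thus remains only to produce some abstract isomorphism, i.e.\ to check that $\varDelta$ and $\tauy\!\varDelta$ have the same signature.

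For this I would trace through the construction preceding the proposition. By definition $\tauy\!\varDelta$ is the deck transformation group of the universal ramified covering $\tauy\bbh\to(\tau C)^{\an}$, hence it is the orbifold fundamental group of the compact $2$-orbifold with underlying space $(\tau C)^{\an}$ and with cone points the $p\in\tau C(\bbc)$ at which $e(\tau^{-1}(p))>1$, of order $e(\tau^{-1}(p))$. Now $\tau C$ is again a smooth projective complex curve, and the canonical bijection $C(\bbc)\to\tau C(\bbc)$ induced by $\tau$ (the inverse of $p\mapsto\tau^{-1}(p)$) identifies $\{q\in C(\bbc):e(q)>1\}$ with the set of cone points of this orbifold and preserves the assigned orders; hence the number $n$ of cone points and the multiset $\{m_1,\dots,m_n\}$ of their orders are the same as for $\varDelta$. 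Finally the genus is unchanged, since it equals $\dim_{\bbc}\Hup^1(C,\mathcal{O}_C)$ and flat base change along $\tau\colon\bbc\to\bbc$ gives $\dim_{\bbc}\Hup^1(\tau C,\mathcal{O}_{\tau C})=\dim_{\bbc}\Hup^1(C,\mathcal{O}_C)$. Thus $\varDelta$ and $\tauy\!\varDelta$ have the same signature and, by the first paragraph, are abstractly isomorphic; combined with the previous step this proves the proposition.

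The argument is essentially bookkeeping, and the only point that needs genuine care --- the main obstacle, such as it is --- is verifying that the $2$-orbifold produced by the construction of $\tauy\bbh$ really has the signature read off from $(\tau C,\tau c)$ exactly as just described: that no cone point is merged, lost, or created, and that the ramification function is transported along the correct canonical bijection on closed points. Once that is granted, both the existence of an abstract isomorphism and its compatibility with element types follow formally.
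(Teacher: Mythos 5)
For cocompact $\varDelta$ your argument is correct and is essentially the paper's: the signature determines the abstract isomorphism type via the standard presentation, the construction of $\tauy\bbh$ transports the cone points and their orders along the bijection $C(\bbc)\to\tau C(\bbc)$, the genus is a $\tau$-invariant, and in the absence of parabolics the element types are detected by the order of the element, so any isomorphism respects them.

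The gap is the non-cocompact case, which the proposition is actually meant to cover: the statement explicitly speaks of parabolic elements, the paper's own proof invokes ``the number of cusps'', and the result is applied in Theorem~\ref{Theo:ActionOnCongruenceDessins} to triangle groups $\varDelta_{p,q,r}$ with $p$, $q$ or $r$ equal to $\infty$, i.e.\ to lattices with cusps (the ``cocompact'' at the start of the subsection is not the generality in which the proposition is used). In that setting your reduction fails at two points. First, you must also check that the number of cusps is preserved, which requires looking at the compactification: $\tau$ acts on $\Cbar$ and on the finite set $\Cbar\smallsetminus C$, so the puncture count and the genus of $\Cbar$ are unchanged; this is not covered by your ``cone points only'' bookkeeping. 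Second, and more seriously, once parabolics are present it is no longer true that an arbitrary abstract isomorphism preserves element types: parabolic and hyperbolic elements both have infinite order, and for example a once-punctured torus group and a thrice-punctured sphere group are both lattices isomorphic to the free group of rank two, yet any isomorphism between them must send some parabolic to a hyperbolic element. So in the general case the isomorphism has to be \emph{chosen}, e.g.\ as the one induced by matching the standard presentations (with parabolic generators for the cusps) or, as the paper says, by using that elliptic, parabolic and hyperbolic elements admit purely topological characterisations in terms of the quotient orbifold; your proof as written does not supply this step.
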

\begin{proof}
The isomorphism type of $\varDelta$ is uniquely determined by the orders of elliptic points, the number of cusps and the genus of the quotient. These are the same as for~$\tauy\!\varDelta$. Elliptic, parabolic and hyperbolic elements can be characterised in purely topological terms.
\end{proof}
There seems to be no canonical way to construct such an isomorphism $\varDelta \to\tauy\!\varDelta$ for all lattices $\varDelta$. Note, however, that the profinite completions of $\varDelta$ and $\tauy\!\varDelta$ are canonically isomorphic since they can be interpreted as \'etale fundamental groups. In Theorem~\ref{ThmConjModularEmbAdelic} below we will see that if $\varDelta$ admits a modular embedding then so does $\tauy\!\varDelta$, and there is a canonical isomorphism between their congruence completions.

\subsubsection*{Modular embeddings, once more}

Suppose there is given a modular embedding consting of the following data: a lattice $\varDelta\subset\PGL_2(\bbr )^+$, a quaternionic connected Shimura datum $(G,X)$, an arithmetic subgroup $\varGamma\subset\gad(\bbq )^+$ containing a group in $\Sigma (G)$, a homomorphism $\varphi\colon\varDelta\to\varGamma$ and a $\varphi$-equivariant holomorphic map $\tif\colon\bbh\to X$.

As before, we obtain a regular map $f\colon C\to S$ between normal varieties with $C^{\an }=\varDelta\backslash\bbh$ and $S=\Sh^0_{\varGamma }(G,X)$, and congruence completions $\hat{\varphi }\colon\hat{\varDelta }\to\mathcal{A}(G)$ and $\hat{f}\colon\hat{C}\to\Sh^0(G,X)$.

To formulate the next theorem, we need to assume one condition:
\begin{Condition}\label{ConditionExistenceSpecialPoint}
There exists a point $y\in\bbh$ such that $x=\tif (y)\in X$ is a special point for~$\varGamma$.
\end{Condition}
This is satisfied, for example, if $y$ is an elliptic fixed point for some group commensurable to~$\varDelta$. In particular, for $\varDelta$ commensurable to a triangle group we can always find such a~$y$. On the other hand, the Andr\'{e}--Oort conjecture for $S$ predicts that as soon as $\varDelta$ is nonarithmetic there are at most finitely many possible $\varDelta$-orbits for~$y$.
\begin{Theorem}\label{ThmConjModularEmbAdelic}
Let $y\in\bbh$ be a point as in Condition~\ref{ConditionExistenceSpecialPoint}.
\begin{enumerate}
\item There exists a modular embedding for $\tauy\!\varDelta$ relative to the field embedding $\tau\circ\varrho$, consisting of a group homomorphism $\tauy\varphi\colon\tauy\!\varDelta\to\taux\varGamma\subset\taux\gad (\bbq )^+$ and a $\tauy\varphi$-equivariant holomorphic map
$\tauy\tif\colon\tauy\bbh\to\taux X.$
\item There exists a commutative diagram of $\bbc$-schemes
\begin{equation*}
\xymatrix{
\tau\hat{C} \ar[r]^-{\tau\hat{f}} \ar[d]^-{\cong } & \tau\Sh^0(G,X) \ar[d]_-{\cong } \\
\widehat{\tauy C} \ar[r]_-{\widehat{\tauy f}} &\Sh^0(\taux G,\taux X)
}
\end{equation*}
equivariant for the commutative diagram of topological groups
\begin{equation*}
\xymatrix{
\hat{\varDelta} \ar[r]^-{\hat{\varphi }} \ar[d]^-{\cong } & \mathcal{A}(G) \ar[d]_-{\cong }^{\eta } \\
\widehat{\tauy\!\varDelta } \ar[r]_-{\widehat{\tauy\varphi }} &\mathcal{A}(\taux G).
}
\end{equation*}
\end{enumerate}
\end{Theorem}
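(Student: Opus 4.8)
The plan is to deduce both assertions from the finite-level Theorem~\ref{ThmConjModularEmbSimple} and the Milne--Shih description of $\tau\Sh^0_{\varGamma }(G,X)$ recalled in Section~\ref{Subs:MilneShihTheory}, by passing to the limit over congruence levels. Throughout, take the special point occurring in the Milne--Shih isomorphisms to be $x=\tif(y)$ --- legitimate by Condition~\ref{ConditionExistenceSpecialPoint} --- and fix base points so that the conjugate of $y$ maps to $^{\tau }x\in\taux X$. For $\varGamma '\in\Sigma (G)$ with $\varGamma '\subseteq\varGamma$ put $\varDelta '=\varphi^{-1}(\varGamma ')$, a torsion-free congruence subgroup of $\varDelta$; then $f$ induces $f_{\varGamma '}\colon C_{\varDelta '}\to\Sh^0_{\varGamma '}(G,X)$, which is covered by a modular embedding with respect to $\varrho$ (for $\varGamma$ itself not in $\Sigma(G)$ the required Milne--Shih statement at level $\varGamma$ follows by descent along the finite quotient, as in the proof of Corollary~\ref{CriteriaMEWithM}).

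For part~(ii) I would first identify the Galois-conjugated curve. The covering $C_{\varDelta '}\to C$ is ramified over each point $q$ with order $e(q)$, the order of the stabiliser of $q$ in $\varDelta$, and ramification orders are preserved by field automorphisms, so the tower $\{\tau C_{\varDelta '}\}$ consists of finite coverings of $\tau C$ exactly adapted to the orbifold function $e\circ\tau^{-1}$ used to construct $\tauy\bbh$; hence each $\tau C_{\varDelta '}$ is canonically a quotient $\tauy\!\varDelta '\backslash\tauy\bbh$ for a uniquely determined torsion-free finite-index subgroup $\tauy\!\varDelta '\subseteq\tauy\!\varDelta$, with $\varGamma '\mapsto\tauy\!\varDelta '$ the map induced by the canonical isomorphism $\hat\varDelta\cong\widehat{\tauy\!\varDelta }$ of \'etale fundamental groups. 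Passing to the limit these assemble to an isomorphism of $\bbc$-schemes $\tau\hat C\cong\widehat{\tauy C}$, with $\widehat{\tauy C}=\varprojlim_{\varGamma '}\tauy\!\varDelta '\backslash\tauy\bbh$, intertwining the $\hat\varDelta$- and $\widehat{\tauy\!\varDelta }$-actions; on the target, Milne--Shih gives $\tau\Sh^0(G,X)\cong\Sh^0(\taux G,\taux X)$ equivariant for $\eta$. Defining $\widehat{\tauy f}$ to be $\tau\hat f$ read through these two isomorphisms makes the geometric square commute by construction, and transporting the $\hat\varphi$-equivariance of $\hat f$ through $\varphi^0_{\tau ,x}$ shows that $\widehat{\tauy f}$ is equivariant for the continuous homomorphism $\eta\circ\hat\varphi\colon\widehat{\tauy\!\varDelta }\to\mathcal{A}(\taux G)$; by uniqueness of the equivariance homomorphism for the pro-covering $\hat f$, this is the desired $\widehat{\tauy\varphi }$, and the lower square of topological groups commutes.

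For part~(i) it remains to see that $\widehat{\tauy\varphi }$ restricts, on the discrete group $\tauy\!\varDelta\subseteq\widehat{\tauy\!\varDelta }$, to a homomorphism landing in $\taux\gad(\bbq )^+$, and to exhibit the $\tauy\varphi$-equivariant holomorphic map $\tauy\tif\colon\tauy\bbh\to\taux X$. Since $f$ is covered by a modular embedding it respects the orbifold structures on $C$ and $S=\varGamma\backslash X$ (the stabiliser of a point of $C$ injects into that of its image, $\varphi$ being injective); the relation $\mathrm{ram}_f(q)\cdot e(q)=e_S(f(q))$ characterising this compatibility involves only ramification indices and stabiliser orders, hence is preserved by $\tau$, so $\tau f\colon\tau C\to\tau S$ is again a morphism of orbifolds. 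By Milne--Shih $\tau S$ is the variety $\taux\varGamma\backslash\taux X$ with $\taux X$ a contractible domain, so its orbifold fundamental group is $\taux\varGamma\subseteq\taux\gad(\bbq )^+$; lifting $\tau f$ to orbifold universal covers yields a holomorphic $\tauy\tif\colon\tauy\bbh\to\taux X$ equivariant for $\tauy\varphi :=(\tau f)_{\ast }\colon\tauy\!\varDelta\to\taux\varGamma$, which agrees with $\widehat{\tauy\varphi }$ on $\tauy\!\varDelta$ by compatibility with the finite levels. Finally, that the $(\tau\circ\varrho )$-component $\tauy\tif_{\tau\varrho }\colon\tauy\bbh\to\bbh$ is a biholomorphism --- i.e.\ that $(\tauy\varphi ,\tauy\tif)$ really is a modular embedding with respect to $\tau\circ\varrho$ in the sense of Proposition and Definition~\ref{Def:ME} --- is exactly Theorem~\ref{ThmConjModularEmbSimple} read at any torsion-free level $C_{\varDelta '}$, whose orbifold universal cover is again $\tauy\bbh$.

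The step I expect to be the main obstacle is pinning $\tauy\varphi$ down to the \emph{rational} points: transporting the $\widehat{\tauy\!\varDelta }$-action through $\eta$ produces \emph{a priori} only a homomorphism into $\mathcal{A}(\taux G)$ (or, via a real form, into $\taux\gad(\bbr )^+$), and seeing that its values on $\tauy\!\varDelta$ actually lie in $\taux\gad(\bbq )^+$ uses the full strength of the Milne--Shih theorem --- namely that $\tau\Sh^0_{\varGamma }(G,X)$ is a quotient of $\taux X$ by an \emph{arithmetic} subgroup of $\taux\gad$ --- rather than merely that $\tau$ of a Shimura variety is again algebraic; note that the naive route, lifting the $\tauy\!\varDelta$-action on $\widehat{\tauy C}$ directly to $\taux X$, is circular here, since such a lift exists equivariantly precisely when the image already lies in $\taux\gad(\bbq )^+$. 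A secondary, bookkeeping difficulty is to verify that the finite-level identifications used above are mutually compatible so that they pass to the limit and that $\widehat{\tauy\varphi }$ is indeed continuous; this follows from the uniqueness clauses in the Milne--Shih theorem (the isomorphism $\varphi^0_{\tau ,x}$ is characterised by its stated properties at every level), but deserves to be spelled out.
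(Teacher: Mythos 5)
Your proposal is correct and follows essentially the same route as the paper's proof: lift $\tau f\colon\tau C\to\tau S$ to the (orbifold) universal covers with base points chosen so that ${^{\tau}y}\mapsto{^{\tau}x}$, so that $\tauy\varphi$ automatically lands in the deck group $\taux\varGamma\subset\taux\gad (\bbq )^+$ furnished by the finite-level Milne--Shih isomorphism, verify the modular-embedding property via Theorem~\ref{ThmConjModularEmbSimple} on suitable torsion-free finite-index levels, and obtain the isomorphisms $\tau\hat{C}\cong\widehat{\tauy C}$ and $\hat{\varDelta}\cong\widehat{\tauy\!\varDelta }$ from the fact that congruence coverings of $C$ are exactly the pullbacks along $f$ of congruence coverings of $S$, which are stable under~$\tau$. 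The ``main obstacle'' you flag (rationality of the values of $\tauy\varphi$) is resolved precisely as in your part~(i) and in the paper, by the finite-level identification $\tau S\cong\Sh^0_{\taux\varGamma }(\taux G,\taux X)$, so there is no circularity.
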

\begin{proof}
The morphism $\tau f\colon\tau C\to \tau S$ admits a unique holomorphic lift to the simply-connected covering spaces
$$\tauy\tif\colon\tauy\bbh\to\tauy X$$
such that $\tauy\tif ({^{\tau }y})={^{\tau }x}$. It is equivariant for a homomorphism of deck transformation groups
$$\tauy\varphi\colon\tauy\!\varDelta\to\taux\varGamma .$$
By Theorem~\ref{ThmConjModularEmbSimple} applied to suitable finite-index subgroups, $\tauy\tif$ and $\tauy\varphi$ constitute a modular embedding.

It remains to construct the isomorphisms $\tau\hat{C}\to\widehat{\tauy C}$ and $\hat{\varDelta}\to\widehat{\tauy\!\varDelta }$. To do this, note that a covering $C'\to C$ coming from a finite-index subgroup $\varDelta '\subset\varDelta$ is a congruence covering if and only it arises as the pullback along $f\colon C\to S$ of a congruence covering of $S$. Since the latter are stable under $\tau$, congruence coverings of $C$ correspond bijectively to congruence covers of $\tau C=\tauy C$, and we obtain the desired isomorphisms.
\end{proof}

\subsubsection*{Congruence subgroups}

Assume now in addition that (\ref{MELiesInImageOfGQ}) holds. Recall that both triangle groups and fundamental groups of Teichm\"{u}ller curves admit modular embeddings satisfying~(\ref{MELiesInImageOfGQ}). Then $\varphi$ can be lifted to a homomorphism $\tilde{\varphi }\colon\tilde{\varDelta }\to G(\bbq )$ where $\tilde{\varDelta }\subset\SL_2(\bbr )$ is the preimage of $\varDelta$ and where
$$\tr\tilde{\delta }=\varrho (\tr\tilde{\varphi }(\tilde{\delta }))\qquad\text{for all }\tilde{\delta }\in\tilde{\varDelta }.$$
By Proposition~\ref{InclusionOfTraceFields}.(ii) the trace field $K=\bbq (\tr\varDelta )$ is then contained in~$\varrho (F)$. By restricting to a subalgebra of $B$ we will assume in addition that $K=\varrho (F)$; this is an inessential restriction but it leads to a simpler notation. We obtain a quaternion algebra $A=K\langle\tilde{\varDelta }\rangle\subset\mathrm{M}_2(\bbr )$ over $K$ and an order $\mathscr{O}=\mathfrak{o}_K\langle\tilde{\varDelta }\rangle$ of $A$. The map $\tilde{\varphi }$ extends to an isomorphism $A\to B$ of $\bbq$-algebras which is semilinear for $(\varrho |^K)^{-1}\colon K\to F$, and it sends $\mathscr{O}$ to an order $\mathscr{O}_B$ of~$B$.

We may then set $\tilde{\varGamma }=\mathscr{O}_B^1$ and let $\varGamma$ be its image in $\gad (\bbq )^+$. Then  $\varphi$ may be considered as a homomorphism $\varphi\colon\varDelta\to\varGamma$. For every ideal $\mathfrak{n}$ of $F$ we set
$$\tilde{\varGamma }(\mathfrak{n})=\{\tilde{\gamma }\in\tilde{\varGamma }\mid\tilde{\gamma }-1\in \mathfrak{n}\mathscr{O}_B\} ,$$
and for an ideal $\mathfrak{a}$ of $K$ we set
$$\tilde{\varDelta }(\mathfrak{a})=\{\tilde{\delta }\in\tilde{\varGamma }\mid \tilde{\delta }-1\in \mathfrak{a}\mathscr{O}\} =\tilde{\varphi }^{-1}(\tilde{\varGamma }(\varrho^{-1}\mathfrak{a})).$$
We let $\varGamma (\mathfrak{n})$ and $\varDelta (\mathfrak{a})$ be the images of these groups $\gad (\bbq )^+$ and $\PGL_2(\bbr )^+$, respectively.

\begin{Corollary}\label{Cor:ConjugationOfCSG}
Let $\varDelta\subset\PGL_2(\bbr )^+$ be a lattice admitting a modular embedding with~(\ref{MELiesInImageOfGQ}) and with trace field~$K$. Let $y\in\bbh$ and $\tau\in\Aut\bbc$, and let $\mathfrak{a}$ be an ideal in $K$. Then the trace field of $\tauy\!\varDelta$ is $\tau (K)$, and $\tauy (\varDelta (\mathfrak{a}))=(\tauy\!\varDelta )(\tau\mathfrak{a})$.
\end{Corollary}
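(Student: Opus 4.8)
The plan is to deduce both assertions from Theorem~\ref{ThmConjModularEmbAdelic} and Example~\ref{ExampleConjugatesCongruenceSubgroup}, by treating $\tauy\!\varDelta$ together with the modular embedding $(\tauy\tif,\tauy\varphi)$ that Theorem~\ref{ThmConjModularEmbAdelic} provides in exactly the way the paragraph preceding the corollary treats $\varDelta$ together with $(\tif,\varphi)$.

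First I would check that $(\tauy\tif,\tauy\varphi)$ again has property~(\ref{MELiesInImageOfGQ}): this is immediate, because the isomorphism $\eta$ occurring in Theorem~\ref{ThmConjModularEmbAdelic} restricts to the isomorphism $G(\bba^f)\to\taux G(\bba^f)$ of~(\ref{EtaIsomorphismForSemisimpleGroups}), and hence carries the image of $G(\bbq)$ in $\mathcal{A}(G)$ onto the image of $\taux G(\bbq)$ in $\mathcal{A}(\taux G)$. Consequently the whole construction of that paragraph applies verbatim to $\tauy\!\varDelta$: it yields a lift $\widetilde{\tauy\varphi}\colon\widetilde{\tauy\!\varDelta}\to(\taux B)^1$, a quaternion algebra $(\tau K)\langle\widetilde{\tauy\!\varDelta}\rangle\subset\mathrm{M}_2(\bbr)$ with an order $\mathfrak{o}_{\tau K}\langle\widetilde{\tauy\!\varDelta}\rangle$, and an isomorphism of $\bbq$-algebras
\begin{equation*}
\widetilde{\tauy\varphi}\colon(\tau K)\langle\widetilde{\tauy\!\varDelta}\rangle\overset{\sim}{\longrightarrow}\taux B
\end{equation*}
semilinear for $(\tau\circ\varrho)^{-1}\colon\tau K\to F$, under which $\mathfrak{o}_{\tau K}\langle\widetilde{\tauy\!\varDelta}\rangle$ corresponds to the order $\taux\mathscr{O}_B$ of Example~\ref{ExampleConjugatesCongruenceSubgroup}.(i), and hence the principal congruence subgroups of $\tauy\!\varDelta$ are $(\tauy\!\varDelta)(\mathfrak{b})=(\tauy\varphi)^{-1}\bigl((\taux\varGamma)((\tau\circ\varrho)^{-1}\mathfrak{b})\bigr)$ for ideals $\mathfrak{b}$ of $\tau K$, with $\taux\varGamma=(\taux\mathscr{O}_B)^1$. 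For the trace field, Proposition~\ref{InclusionOfTraceFields}.(ii) gives the inclusion in $\tau K=(\tau\circ\varrho)(F)$, while the reverse inclusion follows because $\widetilde{\tauy\varphi}(\widetilde{\tauy\!\varDelta})$ generates $\taux B$ as a $\bbq$-algebra — the $\eta$-image of the corresponding fact for $\tilde\varphi(\tilde\varDelta)$, which holds thanks to the normalisation $K=\varrho(F)$ — so that the reduced traces of $\widetilde{\tauy\varphi}(\widetilde{\tauy\!\varDelta})$ generate the centre $F$ of $\taux B$; transporting back along $\tau\circ\varrho$ shows the trace field of $\tauy\!\varDelta$ to be all of $\tau K$.

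For the congruence-subgroup formula I would argue geometrically. One has $\tilde\varDelta(\mathfrak{a})=\tilde\varphi^{-1}\bigl(\tilde\varGamma(\varrho^{-1}\mathfrak{a})\bigr)$, so, passing to images, $\varDelta(\mathfrak{a})=\varphi^{-1}\bigl(\varGamma(\varrho^{-1}\mathfrak{a})\bigr)$, and therefore $C_{\varphi^{-1}(\varGamma(\varrho^{-1}\mathfrak{a}))}\to C$ is the pullback along $f$ of the congruence covering $\Sh^0_{\varGamma(\varrho^{-1}\mathfrak{a})}(G,X)\to S$. Conjugating by $\tau$ and invoking Example~\ref{ExampleConjugatesCongruenceSubgroup}.(ii), which gives $\taux(\varGamma(\mathfrak{n}))=(\taux\varGamma)(\mathfrak{n})$ for ideals $\mathfrak{n}$ of $F$, this conjugates to the congruence covering $\Sh^0_{(\taux\varGamma)(\varrho^{-1}\mathfrak{a})}(\taux G,\taux X)\to\taux S$; by the part of Theorem~\ref{ThmConjModularEmbAdelic} identifying congruence coverings of $C$ with those of $\tauy C$ compatibly with pullback along $f$ and along $\tauy f$, its pullback along $\tauy f$ is the congruence covering of $\tauy C$ with deck group $(\tauy\varphi)^{-1}\bigl((\taux\varGamma)(\varrho^{-1}\mathfrak{a})\bigr)$. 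Hence $\tauy(\varDelta(\mathfrak{a}))=(\tauy\varphi)^{-1}\bigl((\taux\varGamma)(\varrho^{-1}\mathfrak{a})\bigr)$. On the other hand $\tau\mathfrak{a}=(\tau\circ\varrho)(\varrho^{-1}\mathfrak{a})$ gives $(\tau\circ\varrho)^{-1}(\tau\mathfrak{a})=\varrho^{-1}\mathfrak{a}$, so by the description of the principal congruence subgroups of $\tauy\!\varDelta$ obtained above, $(\tauy\!\varDelta)(\tau\mathfrak{a})=(\tauy\varphi)^{-1}\bigl((\taux\varGamma)(\varrho^{-1}\mathfrak{a})\bigr)$ as well; comparing the two finishes the proof.

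The main obstacle, and the only point that is not pure bookkeeping, is the verification used twice above that under $\widetilde{\tauy\varphi}$ the order $\mathfrak{o}_{\tau K}\langle\widetilde{\tauy\!\varDelta}\rangle$ corresponds to $\taux\mathscr{O}_B=\eta(\widehat{\mathscr{O}_B})\cap\taux B$ — equivalently, that the \emph{intrinsic} principal congruence subgroups of $\tauy\!\varDelta$ are the groups $(\tauy\varphi)^{-1}\bigl((\taux\varGamma)(\mathfrak{n})\bigr)$. This must be done one finite prime $v$ of $F$ at a time: there it holds because $\eta$ is an isomorphism of $\bba^f_F$-algebras (so it respects the congruence conditions defining the $(\taux\varGamma)(\mathfrak{n})$), and because Theorem~\ref{ThmConjModularEmbAdelic} identifies the congruence completions of $\varDelta$ and $\tauy\!\varDelta$ compatibly with $\eta$ and with the lifts $\tilde\varphi$, $\widetilde{\tauy\varphi}$, whence the closure of $\widetilde{\tauy\varphi}(\widetilde{\tauy\!\varDelta})$ in $\taux G(\bba^f)$ is the $\eta$-image of the closure of $\tilde\varphi(\tilde\varDelta)$ and therefore generates the same $\mathfrak{o}_{F_v}$-order as $\taux\mathscr{O}_B$ does locally. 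Everything else is manipulation of fractional ideals under the ring isomorphisms $\varrho$ and $\tau\circ\varrho$.
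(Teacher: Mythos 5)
Your overall route is the paper's: identify the intrinsic order of $\tauy\!\varDelta$ with $\taux\mathscr{O}$ by comparing adelic closures (using that an order in a quaternion algebra is determined by its adelic closure, i.e.\ by its local orders), then combine the identification of congruence completions/coverings furnished by Theorem~\ref{ThmConjModularEmbAdelic} with Example~\ref{ExampleConjugatesCongruenceSubgroup}.(ii) and the density of $\varDelta$ and $\tauy\!\varDelta$ in their congruence completions to obtain $\tauy (\varDelta (\mathfrak{a}))=(\tauy\!\varDelta )(\tau\mathfrak{a})$. Your covering-space phrasing of the last step is a repackaging of the paper's completion-and-density argument, and your trace-field argument (inclusion via Proposition~\ref{InclusionOfTraceFields}.(ii), equality by a generation/dimension count) makes explicit what the paper leaves implicit; none of this is a different method.

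There is, however, one justification that is wrong as stated. You call it ``immediate'' that (\ref{MELiesInImageOfGQ}) transfers because $\eta$ ``carries the image of $G(\bbq )$ in $\mathcal{A}(G)$ onto the image of $\taux G(\bbq )$''. The isomorphism (\ref{EtaIsomorphismForSemisimpleGroups}) exists only over $\bba^f$: in general $G$ and $\taux G$ are not isomorphic over $\bbq$ (by Proposition~\ref{ExactShapeOfTwistedGroup} the archimedean ramification of $\taux B$ differs from that of $B$ whenever $\tau\mathscr{P}\neq\mathscr{P}$), and if $\eta$ matched the images of the rational points one could deduce $B\simeq\taux B$; so the inference fails. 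The conclusion you need --- that $\tauy\varphi (\tauy\!\varDelta )$ lies in the image of $\taux G(\bbq )$ --- is nevertheless true, but by a different argument: $\eta$ does match the images of $G(\bba^f)$ and $\taux G(\bba^f)$ inside the completions, hence the image of $\tauy\varphi (\tauy\!\varDelta )$ in $\mathcal{A}(\taux G)$ lies in the image of $\taux G(\bba^f)$, and by the amalgam description (\ref{GAasAmalgamatedProduct}) an element of $\taux\gad (\bbq )^+$ with this property already comes from $\taux G(\bbq )$. Alternatively one can sidestep the issue entirely, as the paper does, by arguing only with closures in $\taux G(\bba^f)$ and the determination of an order by its adelic closure, since $(\tauy\!\varDelta )(\tau\mathfrak{a})$ is defined intrinsically once the trace field statement is known. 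With this point repaired, your proof is correct and essentially identical to the paper's.
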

\begin{proof}
An easy argument shows that $\hat{\mathscr{O}}=\hat{\mathfrak{o}}_F\langle\hat{\varphi}(\hat{\varDelta })\rangle$ and therefore $\taux\hat{\mathscr{O}}=\hat{\mathfrak{o}}_F\langle\widehat{\tauy\varphi }(\widehat{\tauy\!\varDelta })\rangle$. This implies $\mathfrak{o}_F\langle\tauy\varphi (\tauy\!\varDelta )\rangle$ is dense in $\taux\hat{\mathscr{O}}$; since an order in a quaternion algebra is uniquely determined by its adelic closure we obtain
$$\taux\mathscr{O}=\mathfrak{o}_K\langle\tauy\varphi (\tauy\!\varDelta )\rangle .$$

Now $\widehat{\varDelta (\mathfrak{a})}=\hat{\varphi }^{-1}(\widehat{\varGamma (\mathfrak{a})})$ and
\begin{equation*}
\widehat{\tauy (\varDelta (\mathfrak{a}))}=\widehat{\tauy\varphi }^{-1}(\widehat{\taux (\varGamma (\mathfrak{a}))}) \overset{(\ast )}{=}\widehat{\tauy\varphi }^{-1}((\widehat{\taux\varGamma })(\mathfrak{a}))=(\widehat{\tauy\!\varDelta })(\tau\mathfrak{a}),
\end{equation*}
where $(\ast )$ is justified by Example~\ref{ExampleConjugatesCongruenceSubgroup}. Since $\varDelta$ and $\tauy\!\varDelta$ are dense in their respective congruence completions, this implies that $\tauy (\varDelta (\mathfrak{a}))=(\tauy\!\varDelta )(\mathfrak{a})$.
\end{proof}

\subsection{Application to triangle groups and dessins d'enfants}\label{Sect:ThmC}

Let $\varDelta_{p,q,r}$ be a Fuchsian triangle group, let $\mathfrak{a}$ be an ideal in its trace field and let $X_{p,q,r}(\mathfrak{a})$ be the smooth projective complex curve with associated Riemann surface $\varDelta_{p,q,r} (\mathfrak{a})\backslash\bbh$. There is a unique isomorphism $X_{p,q,r}(1)\cong\bbp^1$ sending the elliptic points of order $p,q,r$ to the points $0,1,\infty$, respectively. Hence the quotient map $X_{p,q,r}(\mathfrak{a})\to X_{p,q,r}(1)\cong\bbp^1$ is a normal \emph{Bely\u{\i} map}: it is unramified outside $0,1,\infty$.

Bely\u{\i} maps are famously in one-to-one correspondence with the combinatorial-topologi\-cal objects known as \emph{dessins d'enfants}; the Galois action on dessins d'enfants is much studied for its importance in anabelian geometry. Let $\mathcal{D}_{p,q,r}(\mathfrak{a})$ be the dessin associated with the aforementioned Bely\u{\i} map.
\begin{Theorem}\label{Theo:ActionOnCongruenceDessins}
The action of $\Aut\bbc$ on the curves $X_{p,q,r}(\mathfrak{a})$ and the dessins $\mathcal{D}_{p,q,r}(\mathfrak{a})$ factors through $\Gal (K_{p,q,r}/\bbq )$; it is given by
\begin{equation*}
\tau X_{p,q,r}(\mathfrak{a})\cong X_{p,q,r}(\tau\mathfrak{a})\qquad\text{and}\qquad\tau\mathcal{D}_{p,q,r}(\mathfrak{a})\cong\mathcal{D}_{p,q,r}(\tau\mathfrak{a}).
\end{equation*}
\end{Theorem}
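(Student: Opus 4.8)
The plan is to reduce Theorem~\ref{Theo:ActionOnCongruenceDessins} to Corollary~\ref{Cor:ConjugationOfCSG} together with the rigidity of triangle groups. First I would record the inputs. By Examples~\ref{ExamplesME}.(iv) the triangle group $\varDelta_{p,q,r}$ carries a modular embedding satisfying~(\ref{MELiesInImageOfGQ}) into a quaternionic Shimura datum attached to a quaternion algebra over a totally real field $F$ with $\varrho(F)=K_{p,q,r}$; its trace field is $K_{p,q,r}$ and the order relative to which the $\varDelta_{p,q,r}(\mathfrak{a})$ are defined is $\mathscr{O}=\mathfrak{o}_{K_{p,q,r}}\langle\varDelta_{p,q,r}\rangle$. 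Taking for $y\in\bbh$ an elliptic fixed point of $\varDelta_{p,q,r}$ makes Condition~\ref{ConditionExistenceSpecialPoint} hold, so Corollary~\ref{Cor:ConjugationOfCSG} (and, behind it, Theorem~\ref{ThmConjModularEmbAdelic}) is available. I would also note at the outset that $K_{p,q,r}$, being a compositum of the real cyclotomic fields $\bbq(\cos(\pi/n))=\bbq(\mu_{2n})^{+}$, is Galois over $\bbq$; hence $\tau(K_{p,q,r})=K_{p,q,r}$ for every $\tau\in\Aut\bbc$, the ideal $\tau\mathfrak{a}$ is again an ideal of $K_{p,q,r}$, and the action of $\tau$ will depend only on $\tau|_{K_{p,q,r}}\in\Gal(K_{p,q,r}/\bbq)$, which yields the asserted factorisation of the action.

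Next I would feed $\varDelta=\varDelta_{p,q,r}$ into Corollary~\ref{Cor:ConjugationOfCSG}. This immediately gives that the trace field of $\tauy\!\varDelta_{p,q,r}$ equals $\tau(K_{p,q,r})=K_{p,q,r}$ and that
\begin{equation*}
\tauy\bigl(\varDelta_{p,q,r}(\mathfrak{a})\bigr)=\bigl(\tauy\!\varDelta_{p,q,r}\bigr)(\tau\mathfrak{a}),
\end{equation*}
the right-hand side being the principal congruence subgroup of $\tauy\!\varDelta_{p,q,r}$ at the ideal $\tau\mathfrak{a}$, taken with respect to its intrinsic order $\mathfrak{o}_{K_{p,q,r}}\langle\tauy\!\varDelta_{p,q,r}\rangle$. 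Since by construction the smooth curve attached to $\tauy\!\varDelta_{p,q,r}(\mathfrak{a})\backslash\tauy\bbh$ is $\tau X_{p,q,r}(\mathfrak{a})$, the remaining task is purely one of identification: I must match $\tauy\!\varDelta_{p,q,r}$ with $\varDelta_{p,q,r}$ in a way compatible with principal congruence subgroups.

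Here I would use that triangle groups are rigid. By Proposition~\ref{ShapePreserved} the quotient orbifold of $\tauy\!\varDelta_{p,q,r}$ again has genus zero and exactly three special points, of orders $p,q,r$ (parabolic in the directions where the order is $\infty$); after the inessential choice of an isomorphism $\tauy\PGL_2(\bbr)^{+}\cong\PGL_2(\bbr)^{+}$ the group $\tauy\!\varDelta_{p,q,r}$ becomes a Fuchsian triangle group of signature $(p,q,r)$, hence conjugate in $\PGL_2(\bbr)^{+}$ to $\varDelta_{p,q,r}$. Conjugation in $\PGL_2(\bbr)^{+}$ carries $\mathscr{O}$ to an isomorphic $\mathfrak{o}_{K_{p,q,r}}$-order and the principal congruence subgroup at any ideal $\mathfrak{b}$ to the one at $\mathfrak{b}$; since~(\ref{MELiesInImageOfGQ}) makes these subgroups intrinsic to the abstract group equipped with its trace field, the chosen identification sends $(\tauy\!\varDelta_{p,q,r})(\tau\mathfrak{a})$ to $\varDelta_{p,q,r}(\tau\mathfrak{a})$. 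Combined with the displayed equality this yields $\tau X_{p,q,r}(\mathfrak{a})\cong X_{p,q,r}(\tau\mathfrak{a})$, and if $\tau|_{K_{p,q,r}}=\id$ then $\tau\mathfrak{a}=\mathfrak{a}$, confirming the factorisation.

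Finally I would upgrade this to an isomorphism of dessins, i.e.\ of Bely\u{\i} pairs. The Bely\u{\i} map $\beta_{\mathfrak{a}}\colon X_{p,q,r}(\mathfrak{a})\to X_{p,q,r}(1)\cong\bbp^1$ is the orbifold quotient morphism for $\varDelta_{p,q,r}(\mathfrak{a})\subseteq\varDelta_{p,q,r}$ followed by the normalisation $X_{p,q,r}(1)\cong\bbp^1$ sending the cone points of orders $p,q,r$ to $0,1,\infty$. Applying $\tau$: it fixes $\bbp^1$ together with $0,1,\infty$ and preserves the cone orders, so it carries this normalisation to the analogous one on $\tau X_{p,q,r}(1)$; under the identifications of the previous paragraph, $\tau\beta_{\mathfrak{a}}$ becomes the quotient morphism for $\varDelta_{p,q,r}(\tau\mathfrak{a})\subseteq\varDelta_{p,q,r}$ followed by the same normalisation, that is $\beta_{\tau\mathfrak{a}}$, whence $\tau\mathcal{D}_{p,q,r}(\mathfrak{a})\cong\mathcal{D}_{p,q,r}(\tau\mathfrak{a})$. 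The few ideals $\mathfrak{a}$ for which $X_{p,q,r}(\mathfrak{a})$ fails to be hyperbolic cause no trouble: one may run the argument at a deeper level $\mathfrak{b}\subseteq\mathfrak{a}$ with $\varDelta_{p,q,r}(\mathfrak{b})\trianglelefteq\varDelta_{p,q,r}$ torsion-free and descend along the finite quotient $\varDelta_{p,q,r}(\mathfrak{a})/\varDelta_{p,q,r}(\mathfrak{b})$, or apply Theorem~\ref{ThmConjModularEmbAdelic} directly at the level of $\hat{C}$. The step I expect to demand the most care is this last identification of the non-canonical isomorphism $\tauy\!\varDelta_{p,q,r}\cong\varDelta_{p,q,r}$ with the congruence and Bely\u{\i} structures; all the genuinely arithmetic content is already packaged in Corollary~\ref{Cor:ConjugationOfCSG}.
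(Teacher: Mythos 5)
Your proposal is correct and follows essentially the same route as the paper: identify $\tauy\!\varDelta_{p,q,r}$ with $\varDelta_{p,q,r}$ via the rigidity of triangle groups (Proposition~\ref{ShapePreserved}) and then invoke Corollary~\ref{Cor:ConjugationOfCSG} to get $\tauy(\varDelta_{p,q,r}(\mathfrak{a}))=\varDelta_{p,q,r}(\tau\mathfrak{a})$. The additional points you spell out --- that $K_{p,q,r}$ is Galois over $\bbq$ so the action factors as claimed, that the identification is compatible with the Bely\u{\i} normalisation sending the cone points to $0,1,\infty$, and the treatment of the few non-hyperbolic levels --- are details the paper leaves implicit, and your handling of them is sound.
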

\begin{proof}
The triangle group $\varDelta_{p,q,r}$ is a lattice in $\PGL_2(\bbr )^+$ with presentation
$$\varDelta_{p,q,r}=\langle x,y,z\mid x^p=y^q=z^r=xyz=1\rangle .$$
Moreover, it is the only lattice in $\PGL_2(\bbr )^+$ with that presentation, up to conjugacy. Therefore if $\tau\in\Aut\bbc$ and $y\in\bbh$, and if we choose a biholomorphism $\tauy\bbh\cong\bbh$ and hence an isomorphism $\tauy\PGL_{2,\bbr }\cong\PGL_{2,\bbr }$, then by Proposition~\ref{ShapePreserved} $\tauy\!\varDelta_{p,q,r }$ must be conjugate in $\PGL_2(\bbr )^+$ to $\varDelta_{p,q,r}$, and we may assume the identifications are chosen in such a way that
\begin{equation*}
\tauy\!\varDelta_{p,q,r}=\varDelta_{p,q,r}.
\end{equation*}
By Corollary~\ref{Cor:ConjugationOfCSG} we obtain
\begin{equation*}
\tauy (\varDelta_{p,q,r}(\mathfrak{a}))=\varDelta_{p,q,r}(\tau\mathfrak{a}).\qedhere
\end{equation*}
\end{proof}
Special cases of this theorem were known before. In \cite[Satz~2.5.1]{Feierabend2008} it was proved for signatures of the form $(2,q,r)$ satisfying some additional regularity property and for prime ideals~$\mathfrak{a}$. Furthermore it was proved for all cocompact arithmetic triangle groups except $\varDelta_{3,4,6}$ in \cite[Theorem~6]{GirondoTorresTeigellWolfart2014}. Note that this is only contained in the preprint version \cite{GirondoTorresTeigellWolfart2014}, not in the published version~\cite{MR3289639}. These two works used a different method, analysing the operation of $\varDelta /\!\varDelta (\mathfrak{a})$ on the space of holomorphic one-forms on $X(\mathfrak{a})$.

The groups $\varDelta_{p,q,r}(\mathfrak{a})$ and the curves $X_{p,q,r}(\mathfrak{a})$ are studied in detail in~\cite{ClarkVoight2015}. In \cite{KucharczykVoight2015} the arithmetic aspects of modular embeddings for triangle groups and their congruence subgroups will be studied further.

\providecommand{\bysame}{\leavevmode\hbox to3em{\hrulefill}\thinspace}
\providecommand{\MR}{\relax\ifhmode\unskip\space\fi MR }
\providecommand{\MRhref}[2]{%
  \href{http://www.ams.org/mathscinet-getitem?mr=#1}{#2}
}
\providecommand{\href}[2]{#2}


\begin{thebibliography}{10}

\bibitem{MR0244260}
Armand Borel, \emph{Introduction aux groupes arithm\'etiques}, Publications de
  l'Institut de Math\'ematique de l'Universit\'e de Strasbourg, XV.
  Actualit\'es Scientifiques et Industrielles, No. 1341, Hermann, Paris, 1969.
  

\bibitem{MR0338456}
\bysame, \emph{Some metric properties of arithmetic quotients of symmetric
  spaces and an extension theorem}, J. Differential Geometry \textbf{6} (1972),
  pp.~543--560, Collection of articles dedicated to S. S. Chern and D. C. Spencer
  on their sixtieth birthdays.

\bibitem{MR0147566}
Armand Borel and Harish-Chandra, \emph{Arithmetic subgroups of algebraic
  groups}, Ann. of Math. (2) \textbf{75} (1962), pp.~485--535.

\bibitem{ClarkVoight2015}
Pete~L. Clark and John Voight, \emph{Algebraic curves uniformized by congruence
  subgroups of triangle groups}, 2015, arXiv:1506.01371.

\bibitem{MR1075639}
Paula Cohen and J{\"u}rgen Wolfart, \emph{Modular embeddings for some
  nonarithmetic {F}uchsian groups}, Acta Arith. \textbf{56} (1990), no.~2,
 pp.~93--110.

\bibitem{MR825087}
Marc Culler, \emph{Lifting representations to covering groups}, Adv. in Math.
  \textbf{59} (1986), no.~1, pp.~64--70.

\bibitem{MR0417174}
Pierre Deligne, \emph{\'{E}quations diff\'erentielles \`a points singuliers
  r\'eguliers}, Lecture Notes in Mathematics, Vol. 163, Springer-Verlag,
  Berlin-New York, 1970.

\bibitem{MR0498551}
\bysame, \emph{Th\'eorie de {H}odge. {II}}, Inst. Hautes \'Etudes Sci. Publ.
  Math. (1971), no.~40, pp.~5--57.

\bibitem{MR0498581}
\bysame, \emph{Travaux de {S}himura}, S\'eminaire {B}ourbaki, 23\`eme ann\'ee
  (1970/71), {E}xp. {N}o. 389, Springer, Berlin, 1971, pp.~123--165. Lecture
  Notes in Math., Vol. 244.

\bibitem{MR0463174}
\bysame, \emph{Cohomologie \'etale}, Lecture Notes in Mathematics, Vol. 569,
  Springer-Verlag, Berlin-New York, 1977, S{\'e}minaire de G{\'e}om{\'e}trie
  Alg{\'e}brique du Bois-Marie SGA 4${1\over 2}$, avec la collaboration de J.
  F. Boutot, A. Grothendieck, L. Illusie et J. L. Verdier.

\bibitem{MR546620}
\bysame, \emph{Vari\'et\'es de {S}himura: interpr\'etation modulaire, et
  techniques de construction de mod\`eles canoniques}, Automorphic forms,
  representations and {$L$}-functions ({P}roc. {S}ympos. {P}ure {M}ath.,
  {O}regon {S}tate {U}niv., {C}orvallis, {O}re., 1977), {P}art 2, Proc. Sympos.
  Pure Math., XXXIII, Amer. Math. Soc., Providence, R.I., 1979, pp.~247--289.

\bibitem{Deligne1982}
\bysame, \emph{Motifs et groupe de {T}aniyama}, Hodge cycles, {M}otives, and
  {S}himura varieties, Lecture notes in Mathematics, vol. 900, Springer,
  Berlin, Heidelberg, 1982, pp.~261--279.

\bibitem{DeligneMilne1982}
Pierre Deligne and James~S. Milne, \emph{Tannakian categories}, Hodge cycles,
  {M}otives, and {S}himura varieties, Lecture notes in Mathematics, vol. 900,
  Springer, Berlin, Heidelberg, 1982, pp.~101--228.

\bibitem{DeligneHodgeCycles}
Pierre Deligne (Notes~by J.~Milne), \emph{Hodge cycles on abelian varieties},
  Hodge cycles, {M}otives, and {S}himura varieties, Lecture notes in
  Mathematics, vol. 900, Springer, Berlin, Heidelberg, 1982, pp.~101--228.

\bibitem{MR0219537}
Koji Doi and Hidehisa Naganuma, \emph{On the algebraic curves uniformized by
  arithmetical automorphic functions}, Ann. of Math. (2) \textbf{86} (1967),
  pp.~449--460.

\bibitem{Emsalem2015}
Michel Emsalem, \emph{Twisting by a torsor}, 2015, arXiv:1508.02903.

\bibitem{MR700005}
Gerd Faltings, \emph{Real projective structures on {R}iemann surfaces},
  Compositio Math. \textbf{48} (1983), no.~2, pp.~223--269.

\bibitem{Feierabend2008}
Frank Feierabend, \emph{Galois-{O}perationen auf verallgemeinerten
  {M}acbeath--{H}urwitz-{K}urven}, 2008, PhD dissertation, Frankfurt.

\bibitem{MR3289639}
Ernesto Girondo, David Torres-Teigell, and J{\"u}rgen Wolfart, \emph{Fields of
  definition of uniform dessins on quasiplatonic surfaces}, Riemann and {K}lein
  surfaces, automorphisms, symmetries and moduli spaces, Contemp. Math., vol.
  629, Amer. Math. Soc., Providence, RI, 2014, pp.~155--170.

\bibitem{GirondoTorresTeigellWolfart2014}
\bysame, \emph{Fields of definition of uniform dessins on quasiplatonic
  surfaces (extended version)}, 2014, available at
  http://www.uni-frankfurt.de/50936228/Publikationen.

\bibitem{MR0282990}
Phillip~A. Griffiths, \emph{Periods of integrals on algebraic manifolds. {III}.
  {S}ome global differential-geometric properties of the period mapping}, Inst.
  Hautes \'Etudes Sci. Publ. Math. (1970), no.~38, pp.~125--180.

\bibitem{MR0207978}
R.~C. Gunning, \emph{Special coordinate coverings of {R}iemann surfaces}, Math.
  Ann. \textbf{170} (1967), pp.~67--86.

\bibitem{MR0453649}
F.~Hirzebruch and D.~Zagier, \emph{Intersection numbers of curves on {H}ilbert
  modular surfaces and modular forms of {N}ebentypus}, Invent. Math.
  \textbf{36} (1976), pp.~57--113.

\bibitem{MR780044}
Irwin Kra, \emph{On lifting {K}leinian groups to {${\rm SL}(2,{\bf C})$}},
  Differential geometry and complex analysis, Springer, Berlin, 1985,
  pp.~181--193.

\bibitem{Diplom}
Robert~A. Kucharczyk, \emph{Real {M}ultiplication on {J}acobian {V}arieties},
  2012, arXiv:1201.1570.

\bibitem{KucharczykActa}
\bysame, \emph{Modular embeddings and rigidity for {F}uchsian groups}, Acta
  Arith. \textbf{169} (2015), no.~1, pp.~77--100.

\bibitem{KucharczykVoight2015}
Robert~A. Kucharczyk and John Voight, \emph{Canonical models for triangle
  modular curves}, in preparation.

\bibitem{MR546619}
Robert~P. Langlands, \emph{Automorphic representations, {S}himura varieties,
  and motives. {E}in {M}\"archen}, Automorphic forms, representations and
  {$L$}-functions ({P}roc. {S}ympos. {P}ure {M}ath., {O}regon {S}tate {U}niv.,
  {C}orvallis, {O}re., 1977), {P}art 2, Proc. Sympos. Pure Math., XXXIII, Amer.
  Math. Soc., Providence, R.I., 1979, pp.~205--246.

\bibitem{MR1157844}
Joseph Le~Potier, \emph{Fibr\'es de {H}iggs et syst\`emes locaux}, Ast\'erisque
  (1991), no.~201-203, Exp.\ No.\ 737, pp.~221--268 (1992), S{\'e}minaire Bourbaki,
  Vol. 1990/91.

\bibitem{MR1937957}
Colin Maclachlan and Alan~W. Reid, \emph{The arithmetic of hyperbolic
  3-manifolds}, Graduate Texts in Mathematics, vol. 219, Springer-Verlag, New
  York, 2003.

\bibitem{MR1992827}
Curtis~T. McMullen, \emph{Billiards and {T}eichm\"uller curves on {H}ilbert
  modular surfaces}, J. Amer. Math. Soc. \textbf{16} (2003), no.~4, pp.~857--885.

\bibitem{MR717596}
James~S. Milne, \emph{The action of an automorphism of {${\bf C}$} on a
  {S}himura variety and its special points}, Arithmetic and geometry, {V}ol.
  {I}, Progr. Math., vol.~35, Birkh\"auser Boston, Boston, MA, 1983,
  pp.~239--265.

\bibitem{MR931206}
\bysame, \emph{Automorphic vector bundles on connected {S}himura varieties},
  Invent. Math. \textbf{92} (1988), no.~1, pp.~91--128.

\bibitem{MR1044823}
\bysame, \emph{Canonical models of (mixed) {S}himura varieties and automorphic
  vector bundles}, Automorphic forms, {S}himura varieties, and {$L$}-functions,
  {V}ol.\ {I} ({A}nn {A}rbor, {MI}, 1988), Perspect. Math., vol.~10, Academic
  Press, Boston, MA, 1990, pp.~283--414.

\bibitem{MR2192012}
\bysame, \emph{Introduction to {S}himura varieties}, Harmonic analysis, the
  trace formula, and {S}himura varieties, Clay Math. Proc., vol.~4, Amer. Math.
  Soc., Providence, RI, 2005, pp.~265--378.

\bibitem{MilneShih1982a}
James~S. Milne and Kuang-yen Shih, \emph{Conjugates of {S}himura varieties},
  Hodge cycles, {M}otives, and {S}himura varieties, Lecture notes in
  Mathematics, vol. 900, Springer, Berlin, Heidelberg, 1982, pp.~280--356.

\bibitem{MilneShih1982}
\bysame, \emph{Langlands's construction of the {T}aniyama group}, Hodge cycles,
  {M}otives, and {S}himura varieties, Lecture notes in Mathematics, vol. 900,
  Springer, Berlin, Heidelberg, 1982, pp.~229--260.

\bibitem{MR2188128}
Martin M{\"o}ller, \emph{Variations of {H}odge structures of a {T}eichm\"uller
  curve}, J. Amer. Math. Soc. \textbf{19} (2006), no.~2, pp.~327--344.

\bibitem{MR2772554}
Martin M{\"o}ller and Eckart Viehweg, \emph{Kobayashi geodesics in
  {$\mathcal{A}_g$}}, J. Differential Geom. \textbf{86} (2010), no.~2,
  pp.~355--579.

\bibitem{MoellerZagier2015}
Martin M\"oller and Don Zagier, \emph{Modular embeddings of {T}eichm\"uller
  curves}, 2015, arXiv:1503.05690.

\bibitem{MR0382272}
Wilfried Schmid, \emph{Variation of {H}odge structure: the singularities of the
  period mapping}, Invent. Math. \textbf{22} (1973), pp.~211--319.

\bibitem{MR1745404}
Paul Schmutz~Schaller and J{\"u}rgen Wolfart, \emph{Semi-arithmetic {F}uchsian
  groups and modular embeddings}, J. London Math. Soc. (2) \textbf{61} (2000),
  no.~1, pp.~13--24.

\bibitem{MR1124819}
Mika Sepp{\"a}l{\"a} and Tuomas Sorvali, \emph{Traces of commutators of
  {M}\"obius transformations}, Math. Scand. \textbf{68} (1991), no.~1, pp.~53--58.

\bibitem{MR0082175}
Jean-Pierre Serre, \emph{G\'eom\'etrie alg\'ebrique et g\'eom\'etrie
  analytique}, Ann. Inst. Fourier, Grenoble \textbf{6} (1955--1956), pp.~1--42.

\bibitem{MR0180551}
\bysame, \emph{Cohomologie galoisienne}, Cours au Coll\`ege de France, vol.
  1962, Springer-Verlag, Berlin-Heidelberg-New York, 1962/1963.

\bibitem{MR0572986}
Kuang-yen Shih, \emph{Conjugations of arithmetic automorphic function fields},
  Invent. Math. \textbf{44} (1978), no.~1, pp.~87--102.

\bibitem{MR0204426}
Goro Shimura, \emph{Construction of class fields and zeta functions of
  algebraic curves}, Ann. of Math. (2) \textbf{85} (1967), pp.~58--159.

\bibitem{MR944577}
Carlos~T. Simpson, \emph{Constructing variations of {H}odge structure using
  {Y}ang-{M}ills theory and applications to uniformization}, J. Amer. Math.
  Soc. \textbf{1} (1988), no.~4, pp.~867--918.

\bibitem{MR1040197}
\bysame, \emph{Harmonic bundles on noncompact curves}, J. Amer. Math. Soc.
  \textbf{3} (1990), no.~3, pp.~713--770.

\bibitem{MR1179076}
\bysame, \emph{Higgs bundles and local systems}, Inst. Hautes \'Etudes Sci.
  Publ. Math. (1992), no.~75, pp.~5--95.

\bibitem{MR0429744}
Kisao Takeuchi, \emph{Arithmetic triangle groups}, J. Math. Soc. Japan
  \textbf{29} (1977), no.~1, pp.~91--106.

\bibitem{MR930101}
Gerard van~der Geer, \emph{Hilbert modular surfaces}, Ergebnisse der Mathematik
  und ihrer Grenzgebiete (3) [Results in Mathematics and Related Areas (3)],
  vol.~16, Springer-Verlag, Berlin, 1988.

\bibitem{MR2106125}
Eckart Viehweg and Kang Zuo, \emph{A characterization of certain {S}himura
  curves in the moduli stack of abelian varieties}, J. Differential Geom.
  \textbf{66} (2004), no.~2, pp.~233--287.

\end{thebibliography}
\end{document}